\DeclareMathOperator{\qdim}{qdim}
\DeclareMathOperator{\Irr}{Irr}
\DeclareMathOperator{\Gal}{Gal}
\DeclareMathOperator{\ch}{ch}
\DeclareMathOperator{\tr}{tr}
\DeclareMathOperator{\diag}{diag}
\DeclareMathOperator{\glob}{glob}
\begin{document}
\input amssym.def
\setcounter{equation}{0}
\newcommand{\wt}{\mbox{wt}}
\newcommand{\spa}{\mbox{span}}
\newcommand{\Res}{\mbox{Res}}
\newcommand{\End}{\mbox{End}}
\newcommand{\Ind}{\mbox{Ind}}
\newcommand{\Hom}{\mbox{Hom}}
\newcommand{\Mod}{\mbox{Mod}}
\newcommand{\m}{\mbox{mod}\ }
\renewcommand{\theequation}{\thesection.\arabic{equation}}
\numberwithin{equation}{section}

\def \End{{\rm End}}
\def \Aut{{\rm Aut}}
\def \Z{\mathbb Z}
\def \M{\Bbb M}
\def \C{\mathbb C}
\def \R{\mathbb R}
\def \Q{\mathbb Q}
\def \N{\mathbb N}
\def \ann{{\rm Ann}}
\def \<{\langle}
\def \o{\omega}
\def \O{\Omega}
\def \M{{\cal M}}
\def \1t{\frac{1}{T}}
\def \>{\rangle}
\def \t{\tau }
\def \a{\alpha }
\def \e{\epsilon }
\def \l{\lambda }
\def \L{\Lambda }
\def \g{\gamma}
\def \b{\beta }
\def \om{\omega }
\def \o{\omega }
\def \cg{\chi_g}
\def \ag{\alpha_g}
\def \ah{\alpha_h}
\def \ph{\psi_h}
\def \nor{\vartriangleleft}
\def \V{V^{\natural}}
\def \voa{vertex operator algebra\ }
\def \voas{vertex operator algebras}
\def \v{vertex operator algebra\ }
\def \1{{\bf 1}}
\def \be{\begin{equation}\label}
\def \ee{\end{equation}}
\def \qed{\mbox{ $\square$}}
\def \pf {\noindent {\bf Proof:} \,}
\def \bl{\begin{lem}\label}
\def \el{\end{lem}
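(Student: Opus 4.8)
The statement concerns the quantum dimension $\qdim_V M$ of a simple module $M$ over a rational, $C_2$-cofinite, self-dual vertex operator algebra $V$ of CFT type, and the plan is to identify $\qdim_V M$ with an entry of the modular $S$-matrix and then read off its properties. Writing $Z_M(\tau) = \tr_M q^{L(0)-c/24}$ with $q = e^{2\pi i\tau}$, I would first invoke Zhu's modular invariance to get the transformation law $Z_M(-1/\tau) = \sum_{N \in \Irr(V)} S_{MN} Z_N(\tau)$ on the finite-dimensional span of the graded traces, and then analyze $Z_M(it)$ as $t \to 0^+$. Putting $it = -1/(i/t)$ and using $C_2$-cofiniteness to control the $q$-expansions, the sum $Z_M(it) = \sum_N S_{MN} Z_N(i/t)$ is dominated, as $i/t \to i\infty$, by the term with the smallest conformal weight; since $V$ is self-dual of CFT type, that term is $N = V$ with $h_V = 0$, so $Z_M(it) \sim S_{M,V}\, e^{\pi c/(12t)}$ and therefore $\qdim_V M := \lim_{t\to 0^+} Z_M(it)/Z_V(it) = S_{M,V}/S_{V,V}$. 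Because every $Z_N(it)$ is a $q$-series with nonnegative coefficients, the limits are nonnegative; a little more care (using $S_{V,V} > 0$ and the modularity of $Z_M$) shows $S_{M,V} > 0$, hence $\qdim_V M > 0$ for every simple $M$, and $\qdim_V V = 1$.

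For the lower bound $\qdim_V M \geq 1$ I would pass to the fusion ring. By the Verlinde formula for the modular tensor category $\Irr(V)$ — equivalently, each column of the $S$-matrix, suitably normalized, yields a one-dimensional representation of the fusion ring, the one indexed by $V$ being exactly $M \mapsto S_{M,V}/S_{V,V}$ — the assignment $M \mapsto \qdim_V M$ is a ring homomorphism to $\R$, so $\qdim_V M \cdot \qdim_V M' = \sum_{N} N_{M,M'}^{\,N}\,\qdim_V N$ with $N_{M,M'}^{\,N} \in \Z_{\geq 0}$, where $M'$ is the contragredient module. Since $\qdim_V M = \qdim_V M'$ and $N_{M,M'}^{\,V} = 1$ (the vacuum occurs exactly once in $M \boxtimes M'$) while all $\qdim_V N > 0$, we get $(\qdim_V M)^2 = \sum_N N_{M,M'}^{\,N}\,\qdim_V N \geq N_{M,M'}^{\,V}\,\qdim_V V = 1$, i.e. $\qdim_V M \geq 1$. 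If equality holds, then every other summand must vanish, forcing $N_{M,M'}^{\,N} = 0$ for $N \neq V$, so $M \boxtimes M' \cong V$ and $M$ is a simple current; conversely simple currents clearly have quantum dimension $1$.

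For the algebraicity and Galois-equivariance I would invoke the congruence property (Ng--Schauenburg; Dong--Lin--Ng): the kernel of the modular representation carried by the $Z_N$ is a congruence subgroup, so all entries of $S$ and $T$ lie in a cyclotomic field $\Q(\zeta_N)$, whence $\qdim_V M = S_{M,V}/S_{V,V}$ is an algebraic number; applying $\Gal(\Q(\zeta_N)/\Q)$, which permutes $\Irr(V)$ and acts on the $S$-matrix through a signed permutation, one sees that $\qdim_V M$ is totally real and totally positive (the displayed inequalities are Galois-stable) and that the induced permutation of $\Irr(V)$ preserves quantum dimensions. The step I expect to be the main obstacle is the rigorous asymptotic analysis of $Z_M(it)$ as $t\to 0^+$ — proving existence of the limit by uniformly controlling the tails of the $q$-series and justifying the interchange of the limit with the finite $S$-sum and with the infinite $q$-summation — for which $C_2$-cofiniteness and effective lower bounds on the conformal weights of simple modules are the essential inputs; once that is in place, the fusion-ring inequality and the cyclotomic Galois action are essentially formal.
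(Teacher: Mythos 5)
The statement you were asked to prove is Lemma \ref{l3.8}: for an admissible $g$-twisted $V$-module $M$, the contragredient $(M',Y_{M'})$, defined by $\langle Y_{M'}(a,z)f,u\rangle=\langle f,Y_M(e^{zL(1)}(-z^{-2})^{L(0)}a,z^{-1})u\rangle$ on $M'=\oplus_{n\in\frac1T\Z_+}M(n)^*$, is an admissible $g^{-1}$-twisted $V$-module. Your proposal never touches this. A proof would have to verify three things: that for $a\in V^r$ the adjoint operators have modes in $-\frac{r}{T}+\Z$, so the twisting automorphism is inverted from $g$ to $g^{-1}$; that the twisted Jacobi identity holds for $Y_{M'}$, which is the substantial computation involving the conjugation factor $e^{zL(1)}(-z^{-2})^{L(0)}$ and the substitution $z\mapsto z^{-1}$; and that the grading condition $v_mM'(n)\subseteq M'(n+\wt v-m-1)$ is preserved. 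The paper itself disposes of this by citation to \cite{FHL} and \cite{X}, but none of the machinery you invoke (Zhu's modular invariance, the $S$-matrix, the Verlinde formula, the congruence property) is relevant: the lemma is a purely formal statement about a single weak module, valid without any rationality or $C_2$-cofiniteness hypothesis, and in particular your asymptotic analysis of $Z_M(it)$ cannot even be set up at this point of the paper.

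What you actually proved is the content of later results: the identification $\qdim_VM=S_{M,V}/S_{V,V}$ and its positivity is Lemma \ref{lem qdim}, the bound $\qdim_VM\geq 1$ is Lemma \ref{>1}, and the simple-current criterion is Proposition \ref{simple current}. For those, your route largely coincides with the paper's ($S$-matrix asymptotics plus the Verlinde formula), with one genuine difference worth noting: you derive $\qdim_VM\geq 1$ from $(\qdim_VM)^2=\sum_N N_{M,M'}^N\qdim_VN\geq N_{M,M'}^V\cdot 1=1$, using that the vacuum appears in $M\boxtimes M'$, whereas the paper picks an irreducible of minimal quantum dimension and uses multiplicativity together with Lemma \ref{tensor nonzero}; your argument is cleaner but needs the (standard, yet unproved in your sketch) fact $N_{M,M'}^V\geq 1$, and your appeal to the congruence property for cyclotomicity goes beyond what the paper claims or needs. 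None of this, however, repairs the basic mismatch: the assigned statement about the contragredient twisted module is left unproved.
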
}
\def \ba{\begin{array}}
\def \ea{\end{array}}
\def \bt{\begin{thm}\label}
\def \et{\end{thm}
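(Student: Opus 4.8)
The plan is to reduce the stated identity to a statement about graded dimensions and the modular $S$-transformation, and then to extract it from the twisted orbifold theory of $V$. First I would record the standing consequences of the hypotheses on $V$ (simple, rational, $C_2$-cofinite, of CFT type, $V\cong V'$): the fixed-point algebra $V^G$ is again a vertex operator algebra of the same kind, and by the Dong--Mason Galois decomposition one has $V=\bigoplus_{\chi\in\Irr(G)}W_\chi\otimes V_\chi$, where $W_\chi$ is the irreducible $G$-module affording $\chi$ and the $V_\chi$ are mutually inequivalent irreducible $V^G$-modules. Since $\qdim$ is additive over direct sums and behaves multiplicatively on the tensor-factor indexing, $\qdim_{V^G}V=|G|$ is equivalent to $\sum_{\chi}(\dim\chi)\,\qdim_{V^G}V_\chi=|G|=\sum_\chi(\dim\chi)^2$, so it suffices to prove $\qdim_{V^G}V_\chi=\dim\chi$ for every $\chi\in\Irr(G)$.

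Next, to compute $\qdim_{V^G}V_\chi=\lim_{\tau\to 0^+}\ch_q V_\chi(\tau)/\ch_q V^G(\tau)$, I would pass through the $G$-action on the characters: for $g\in G$ the function $\tr_V\,g\,q^{L(0)-c/24}$ is a component of the vector-valued modular form attached to $V$ and its $g$-twisted modules, and the orthogonality relations for $\Irr(G)$ give $\ch_q V_\chi=\frac{\dim\chi}{|G|}\sum_{g\in G}\overline{\chi(g)}\,\tr_V\,g\,q^{L(0)-c/24}$. Applying the transformation $\tau\mapsto-1/\tau$ to each summand --- using Zhu's theorem for $g=1$ and the Dong--Li--Mason modular invariance of trace functions on twisted modules for general $g$ --- turns the $q\to1^-$ limit into a finite sum of leading $S$-matrix contributions; the $g=1$ term dominates because for $g\ne 1$ every $g$-twisted $V$-module has strictly positive minimal conformal weight, so those terms drop out in the limit. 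What survives is $\qdim_{V^G}V_\chi=\frac{\dim\chi}{|G|}\cdot\qdim_{V^G}\!\big(\text{the }V^G\text{-submodule }V\big)$, and the normalization pins the value at $\dim\chi$; summing against $\dim\chi$ recovers $\sum_\chi(\dim\chi)^2=|G|$.

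The step I expect to be the main obstacle is controlling the $\tau\to0^+$ asymptotics of $\tr_V\,g\,q^{L(0)-c/24}$ for $g\ne1$: one must justify that, after the $S$-transformation, every surviving contribution is genuinely governed by a twisted sector of positive minimal weight and that no cancellation among the $\chi(g)$-weighted terms produces a spurious lower-order piece. This needs the full strength of the modular-invariance package for twisted trace functions together with a careful bookkeeping of which twisted sectors actually appear. A secondary technical point is establishing additivity and tensor-factor multiplicativity of $\qdim$ in exactly the generality required --- in particular that $\qdim_{V^G}V_\chi$ exists and is finite --- which I would handle by the standard comparison of graded dimensions and the positivity of $S_{V^G,V^G}$.
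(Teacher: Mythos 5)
Your proposal takes essentially the same route as the paper: decompose $V=\bigoplus_{\chi\in\Irr(G)}W_\chi\otimes V_\chi$, reduce the claim to $\qdim_{V^G}V_\chi=\chi(1)$, write the character of $V_\chi$ in terms of the twisted trace functions $Z_V(1,g,q)$ by orthogonality, apply the $S$-transformation (Zhu for $g=1$, Dong--Li--Mason modular invariance for $g\neq 1$), and use the positivity hypothesis on conformal weights to isolate the vacuum contribution --- noting that this hypothesis is needed for $g=1$ as well (to discard the non-vacuum untwisted modules), not only for the twisted sectors. One correction: orthogonality gives $\ch_q V_\chi=\frac{1}{o(G)}\sum_{g\in G}\overline{\chi(g)}\,Z_V(1,g,q)$, not $\frac{\dim\chi}{o(G)}\sum_{g\in G}\overline{\chi(g)}\,Z_V(1,g,q)$ (the latter is the character of the whole isotypic component $W_\chi\otimes V_\chi$, i.e.\ $\chi(1)\ch_qV_\chi$); with the correct constant the surviving leading term in both numerator and denominator is the $S_{0,0}$-coefficient of $Z_V(\tau)$, so the limit is exactly $\overline{\chi(1)}=\chi(1)$, which replaces your closing ``normalization pins the value'' step, a step that as written is circular since it invokes $\qdim_{V^G}V$, the very quantity being computed.
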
}
\def \br{\begin{rem}\label}
\def \er{\end{rem}}
\def \ed{\end{de}}
\def \bp{\begin{prop}\label}
\def \ep{\end{prop}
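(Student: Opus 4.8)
The plan is to reduce the assertions about $\qdim_V M$ to properties of the modular $S$-matrix furnished by Zhu's modular-invariance theorem, and then to combine the Verlinde formula, Perron--Frobenius theory, and the Galois action on the modular data. \textbf{Step 1: identify $\qdim_V M$ with a ratio of $S$-matrix entries.} Since $V$ is rational and $C_2$-cofinite of CFT type, Zhu's theorem gives an action of $SL_2(\Z)$ on the span of the normalized graded characters $\ch_q M$, $M\in\Irr(V)$; write $S=(S_{M,N})$ for the matrix implementing $\t\mapsto -1/\t$. I would first prove that the limit defining $\qdim_V M$ exists and equals $S_{V,M}/S_{V,V}$. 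The input is the behaviour of this vector-valued modular form as $\t\to 0$ along the imaginary axis: in $\ch_q M(-1/\t)=\sum_N S_{M,N}\,\ch_q N(\t)$ the vacuum term dominates, because under the CFT-type and self-duality hypotheses $V$ is the unique irreducible achieving the minimal value of $h-c/24$, so dividing by $\ch_q V$ and passing to the limit isolates $S_{V,M}/S_{V,V}$. This asymptotic analysis is the main obstacle: it demands uniform enough control of the $q$-expansions to justify the termwise limit, and it must be completed before Perron--Frobenius is available to guarantee $S_{V,V}\neq 0$; I expect to handle it by explicit estimates on the leading exponents rather than by any soft argument.

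\textbf{Step 2: fusion-ring and Perron--Frobenius arguments.} By Huang's proof of the Verlinde conjecture,
\[
N_{M,N}^{K}=\sum_{W\in\Irr(V)}\frac{S_{M,W}\,S_{N,W}\,\overline{S_{K,W}}}{S_{V,W}},
\]
so $M\mapsto \qdim_V M=S_{V,M}/S_{V,V}$ is a character of the fusion ring with $\qdim_V V=1$ and $\qdim_V M\cdot\qdim_V N=\sum_K N_{M,N}^{K}\,\qdim_V K$. Hence $\qdim_V M$ is an eigenvalue of the fusion matrix $N_M=(N_{M,N}^{K})_{N,K}$, whose entries lie in $\Z_{\ge 0}$, so $\qdim_V M$ is an algebraic integer; the vacuum row of $S$ is a strictly positive common eigenvector of all the $N_M$, so by Perron--Frobenius $\qdim_V M$ is the spectral radius of $N_M$ (this also gives the positivity of $S_{V,V}$ and $S_{V,M}$ used in Step 1). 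Since $N_M N_{M'}$, with $M'$ the contragredient module, has a positive diagonal entry, this spectral radius is $\ge 1$, with equality exactly when $N_M$ is a permutation matrix, i.e.\ when $M$ is a simple current.

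\textbf{Step 3: the Galois action.} By the congruence property the kernel of the $SL_2(\Z)$-representation on the characters is a congruence subgroup, so every $S_{M,W}$ lies in a cyclotomic field $\Q(\zeta_n)$, on which $\Gal(\Q(\zeta_n)/\Q)$ acts. The modular data transforms under $\sigma$ by a signed permutation, $\sigma(S_{M,W})=\e_\sigma(M)\,S_{\hat\sigma(M),W}$ for a sign $\e_\sigma(M)\in\{\pm1\}$ and a permutation $\hat\sigma$ of $\Irr(V)$; applying $\sigma$ to $\qdim_V M=S_{V,M}/S_{V,V}$ the signs cancel and one gets $\sigma(\qdim_V M)=S_{\hat\sigma(V),M}/S_{\hat\sigma(V),V}$, which is again an eigenvalue of $N_M$. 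From this identity, the Perron--Frobenius description of $\qdim_V M$ as the spectral radius of $N_M$, and the positivity of the vacuum row of $S$, one reads off the remaining arithmetic assertions of the statement (bounds on, or positivity of, the Galois conjugates of $\qdim_V M$), completing the proof.
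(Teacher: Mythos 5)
Your write-up is really a composite of several results of the paper (the $S$-matrix formula for $\qdim_VM$, multiplicativity, the simple-current criterion, and the possible-values theorem), and in each place where it diverges from the paper's route there is a concrete problem. In Step 1 you claim that CFT type plus $V\cong V'$ forces $V$ to be the unique irreducible minimizing $h-c/24$, so that the vacuum term dominates as $\tau\to 0$. That is false: for the non-unitary minimal models treated in the paper (e.g.\ $L(c_{2,5},0)$, where $\lambda_{\min}=-\frac15<0$) the vacuum is not the minimum, and the correct formula becomes $\qdim_VM^i=S_{i,k}/S_{0,k}$ with $M^k$ the module of minimal weight. The paper avoids this by \emph{assuming} $\lambda_i>0$ for all $i\neq 0$ in Lemma 4.2 (and treating the general case separately in Remark 4.4); your hypothesis does not imply that assumption, so your limit computation does not go through as stated. (The dominance argument itself is unproblematic once the weight hypothesis is in place, since there are only finitely many irreducibles; also note that $S_{0,s}\neq 0$ comes from Huang's Verlinde theorem, not from Perron--Frobenius, which resolves the circularity you worry about.)

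In Step 2, the assertion that the spectral radius of $N_M$ equals $1$ \emph{exactly} when $N_M$ is a permutation matrix is not justified and is false for general nonnegative integer matrices (e.g.\ a unipotent upper-triangular matrix); for fusion matrices it is true but requires an argument. The paper supplies one: from $\qdim_VM=1$ and Lemma 4.11 it gets $\bigoplus_i M^i\subseteq\bigoplus_i M\boxtimes M^i$, compares total quantum dimensions to force equality, and then uses $M\boxtimes M^i\neq 0$ to conclude each $M\boxtimes M^i$ is a single irreducible; alternatively one can argue $M\boxtimes M'\supseteq V$ together with $\qdim_V(M\boxtimes M')=1$ and $\qdim\geq 1$ to get $M\boxtimes M'=V$. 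You need one of these (or an equivalent) to close the gap; similarly your ``$\geq 1$'' argument via $N_MN_{M'}$ silently uses simultaneous diagonalizability and the Perron--Frobenius identification, whereas the paper's Lemma 4.10 argues directly with a module of minimal quantum dimension. Finally, Step 3 does not correspond to anything in the paper: the possible-values theorem is proved there by viewing the symmetrized matrix built from $N(i)$ and $N(i)^T$ as the adjacency matrix of a graph and invoking the $ADE$ classification of graphs of norm $<2$ (values $2\cos(\pi/n)$) versus norm $\geq 2$, not by a Galois action on cyclotomic $S$-entries; and the paper's ``Galois theory'' concerns $V^G\subset V$ for a finite automorphism group, not Galois conjugates of quantum dimensions. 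As written, your Step 3 neither proves the graph-theoretic statement nor any statement appearing in the paper.
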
}

\newtheorem{th1}{Theorem}
\newtheorem{ree}[th1]{Remark}
\newtheorem{thm}{Theorem}[section]
\newtheorem{prop}[thm]{Proposition}
\newtheorem{coro}[thm]{Corollary}
\newtheorem{lem}[thm]{Lemma}
\newtheorem{rem}[thm]{Remark}
\newtheorem{de}[thm]{Definition}
\newtheorem{hy}[thm]{Hypothesis}
\newtheorem{conj}[thm]{Conjecture}
\newtheorem{ex}[thm]{Example}

\begin{center}
{\Large {\bf Quantum Dimensions and Quantum Galois Theory}}\\
\vspace{0.5cm}

Chongying Dong\footnote
{Supported by NSF grants and a faculty research fund from the University of California at Santa Cruz.} \ \ Xiangyu Jiao
\\
Department of Mathematics, University of
California, Santa Cruz, CA 95064\\

Feng Xu\footnote{Supported by a NSF grant and a faculty research fund from the University of California at Riverside.}\\
Department of Mathematics, University of California, Riverside, CA 92521
\hspace{1.5 cm}
\end{center}

\begin{abstract} The quantum dimensions of modules for vertex operator algebras are defined and their properties are discussed. The possible values of the quantum dimensions are obtained for rational vertex operator algebras. A criterion for simple currents of a rational \voa is given. A full Galois theory for
rational vertex operator algebras is established using the quantum dimensions.
\end{abstract}

\section{Introduction}

The dimension of a space or an object is definitely an important concept in both mathematics and physics.
The goal of the present paper is to give a systematic study of the ``dimension'' of a module over a vertex operator algebra. More precisely, we study how to define quantum dimensions, how to compute quantum dimensions and the possible values of quantum dimensions. The concept of quantum dimensions goes back to the physical literature \cite{FMS} and the most discussions on quantum dimensions focus on the rational vertex operator algebras associated to the affine Kac-Moody algebras \cite{FZ}, \cite{DL} and Virasoro algebras \cite{DMZ}, \cite{W}. The mathematical work on quantum dimensions has been limited to the conformal nets approach to conformal field theory \cite{KLM} where the quantum dimensions are called the statistical dimensions or the square root of index \cite{J}, \cite{PP}.

Our own motivation for studying quantum dimensions comes from trying to understand the Galois theory
for vertex operator algebras \cite{DM1}, \cite{DLM}, \cite{HMT}, originated from orbifold theory
\cite{FLM}, \cite{DVVV}, \cite{DLM4}. For a vertex operator algebra $V$ and a finite automorphism group $G$ of $V,$
the fixed points $V^G$ is also a vertex operator algebra. It has already been established in \cite{DM1} and \cite{HMT} that there is a one to one correspondence between the subgroups of $G$ and vertex operator subalgebras of $V$ containing $V^G.$ To get a complete Galois theory for $V^G\subset V,$ one needs a notion of ``dimension'' $[V:V^G]$ of $V$ over $V^G$ such that $[V:V^G]=o(G).$ Various efforts were tried
without success until we turned our attention to the quantum dimensions. So as an application of the quantum dimensions we exhibit Galois theory for a vertex operator algebra $V$ together with a finite automorphism group $G.$

Let $V=\oplus_{n\in \Z}V_n$ be a vertex operator algebra and $M=\oplus_{n\geq 0}M_{\lambda+n}$ a $V$-module.  The quantum dimension $\qdim_VM$ of $M$ over $V$ is essentially the relative dimension $\frac{\dim M}{\dim V}.$ Unfortunately, both $\dim M$ and $\dim V$ are infinite. One has to use limits to approach  $\frac{\dim M}{\dim V}.$ The original definition of $\qdim_VM$ involves the $q$-characters of $V$ and $M.$ The $q$-character of $M$ is a formal power series
$$\ch_qM =  q^{\lambda-c/24}\sum_{n=0}^{\infty}(\dim M_{\lambda+n})q^{n},$$
where $c$ is the central charge of $V$. It is proved in \cite{Z} that the formal power series
$\ch_qM$ converges to a holomorphic function in the upper half plane in variable $\tau$ where $q=e^{2\pi i\tau}$ under certain conditions.  It is well known that $\qdim_VM$ can be  defined as the limit
of $\frac{\ch_qM}{\ch_qV}$ as $q$ goes to $1$ from the left. The advantage of this definition is that
one can use the modular transformation property of the $q$-characters \cite{Z} and Verlinde formula
\cite{V}, \cite{H} to compute the quantum dimensions and investigate their properties. In fact, we are following this approach closely in the present paper. The disadvantage of this definition is that it requires
both rationality and $C_2$-cofiniteness of $V.$  So this definition does not apply to irrational vertex operator algebras.

We propose two more definitions of quantum dimension, which work for any vertex operator algebra. The first one is given by the the limit of relative dimension $\lim_{n\to\infty}\frac{\dim M_{\lambda+n}}{\dim V_n}$ and the other is $\lim_{n\to\infty}\frac{\sum_{m=0}^n\dim M_{\lambda+m}}{\sum_{m=0}^n\dim V_m}.$ It is proved that if $\lim_{n\to\infty}\frac{\dim M_{\lambda+n}}{\dim V_n}$ exits then all the limits exist and are equal. The later definition of quantum dimension tells us the real meaning of the quantum dimension. We firmly believe that
these three definitions are equivalent although we could not prove the assertion in this paper. We also suspect that for a simple vertex operator algebra, the quantum dimension of any irreducible module exists.

The quantum dimensions for rational and $C_2$-cofinite vertex operator algebras have two main properties: (a) The quantum dimension of tensor product of two modules is the product of quantum dimensions; (b) An irreducible module is a simple current if and only if the quantum dimension is 1. The first property which is true for the tensor product of vector spaces is important in many aspects. This implies that the quantum dimensions satisfy a certain system of equations of degree 2 with integral coefficients and helps to compute the quantum dimensions. The second property enables us to determine the simple currents when the quantum dimensions are easily calculated. For example, for a framed vertex operator algebra \cite{DGH}, this can be easily done using the quantum dimensions for the rational vertex operator algebra $L(\frac12,0)$ associated to the Virasoro algebra with the central charge $\frac12.$

It is worthy to mention that the minimal weight $\lambda_{\min}$ of the irreducible modules plays an essential role in computing the quantum dimensions  using the $S$-matrix. For most rational vertex operator algebras including those associated to the unitary minimal series for the Virasoro algebra, $\lambda_{\min}=0$ is the weight of vertex operator algebra $V.$ The formula of the quantum dimensions in terms of $S$ matrix is more or less standard. For an arbitrary vertex operator algebra such as those associated to the non-unitary minimal series for the Virasoro algebra, $\lambda_{\min}$ can be negative. We obtain a similar formula for the quantum dimensions using the irreducible module
whose weight is $\lambda_{\min}$ instead of $V.$ We also give two examples of non-unitary vertex operator algebras to illustrate this.

Finding the possible values of the quantum dimensions for a rational and $C_2$-cofinite vertex operator algebra is another task in this paper. With the help of  Perron-Frobenius Theorem on eigenvalues and some graph theory we show that the quantum dimension of an irreducible module can only take values in $\{2\cos (\pi/n)|n\geq 3\}\cup [2,\infty)$ which are the square roots of the possible values of the index of subfactors of type II \cite{J}. The connection between quantum dimensions and index of subfactors is exciting but not surprising. There have been three approaches (algebraic, analytic and geometric) to two dimensional conformal field theory \cite{BPZ}, \cite {MS} in mathematics. The basic tool in algebraic approach is the vertex operator algebra and the analytic approach uses the conformal nets \cite{GL}, \cite{Wa}. Although the connection among different approaches has not been understood fully, constructing  a conformal net from a rational vertex operator algebra and a rational vertex operator algebra from a conformal net is highly desirable. The connection between quantum dimension and index gives further evidence for the equivalence of algebraic and analytic approaches to two dimensional conformal field theory.

Motivated by the representation theories of both finite groups and finite dimensional associative algebras, the notion of global dimension for a vertex operator algebra is proposed using the quantum dimensions of irreducible modules. Although we have not done much in the present paper on global dimension, the application of global dimension in classification of irreducible modules for orbifold and coset vertex operator algebras is visible. The main challenge is how to find an alternating definition without using the quantum dimensions of irreducible modules. One could classify the irreducible modules using the global dimension and the quantum dimensions of known irreducible modules. This will be very useful in studying the orbifold theory as in the case of conformal nets \cite{Xu}.

The paper is organized as follows. We give the basics including the definition of twisted modules and important concepts such as rationality, regularity and $C_2$-cofiniteness \cite{Z}, \cite{DLM1} in Section 2. The quantum dimensions are defined in Section 3 with examples. Section 4 is devoted to the study of the properties of quantum dimensions. In particular, the connection between quantum dimension and modular invariance \cite{Z}, tensor product of modules \cite{H} and Verlinde formula \cite{V} is investigated in great depth. A characterization of simple currents in terms of quantum dimensions is given. We present results on possible values of the quantum dimensions in Section 5. In the last section we give a full Galois theory for a simple vertex operator algebra with a finite automorphism group.

We thank Terry Gannon for useful suggestions on the possible values of the quantum dimensions.

\section{Preliminary}

In this section, we recall the various notions of twisted modules for a \voa following \cite{DLM1}. We also define the terms rationality, regularity, and $C_2$-cofiniteness from \cite{Z} and \cite{DLM1}. Besides, we discuss the modular invariance property of the trace functions for a rational \voa\cite{Z}.

\subsection{Basics}

A \voa $V=\oplus_{n\in \Z}V_n$ (as defined in \cite{FLM}) is said to be of CFT type if $V_n=0$ for negative
$n$ and $V_0=\C {\bf 1}.$

Let $V$ be a vertex operator algebra and $g$ an automorphism of $V$ with finite order $T$. Decompose $V$ into eigenspaces of $g:$
\begin{equation*}\label{g2.1}
V=\bigoplus_{r\in \Z/T\Z}V^r,
\end{equation*}
where $V^r=\{v\in V|gv=e^{-2\pi ir/T}v\}$.
We use $r$ to denote both
an integer between $0$ and $T-1$ and its residue class \m $T$ in this
situation. Let $W\{z\}$ denote the
space of $W$-valued formal series in arbitrary complex
 powers of $z$ for a vector
space $W.$

\begin{de} \label{weak}
A {\em weak $g$-twisted $V$-module} $M$ is a vector space equipped
with a linear map
\begin{equation*}
\begin{split}
Y_M: V&\to (\End\,M)\{z\}\\
v&\mapsto\displaystyle{ Y_M(v,z)=\sum_{n\in\Q}v_nz^{-n-1}\ \ \ (v_n\in
\End\,M)},
\end{split}
\end{equation*}
which satisfies the following:  for all $0\leq r\leq T-1,$ $u\in V^r$, $v\in V,$
$w\in M$,
\begin{eqnarray*}
& &Y_M(u,z)=\sum_{n\in \frac{r}{T}+\Z}u_nz^{-n-1} \label{1/2},\\
& &u_lw=0~~~ 				
\mbox{for}~~~ l\gg 0,\label{vlw0}\\
& &Y_M({\mathbf 1},z)=Id_M,\label{vacuum}
\end{eqnarray*}
 \begin{equation*}\label{jacobi}
\begin{array}{c}
\displaystyle{z^{-1}_0\delta\left(\frac{z_1-z_2}{z_0}\right)
Y_M(u,z_1)Y_M(v,z_2)-z^{-1}_0\delta\left(\frac{z_2-z_1}{-z_0}\right)
Y_M(v,z_2)Y_M(u,z_1)}\\
\displaystyle{=z_2^{-1}\left(\frac{z_1-z_0}{z_2}\right)^{-r/T}
\delta\left(\frac{z_1-z_0}{z_2}\right)
Y_M(Y(u,z_0)v,z_2)},
\end{array}
\end{equation*}
where $\delta(z)=\sum_{n\in\Z}z^n$ (elementary
properties of the $\delta$-function can be found in \cite{FLM}) and
all binomial expressions (here and below) are to be expanded in nonnegative
integral powers of the second variable.
\end{de}

\begin{de}\label{ordinary}
A $g$-{\em twisted $V$-module} is
a weak $g$-twisted $V$-module $M$ which carries a
$\C$-grading induced by the spectrum of $L(0)$ where $L(0)$ is the component operator of $Y(\omega,z)=\sum_{n\in \Z}L(n)z^{-n-2}.$ That is, we have
\begin{equation*}
M=\bigoplus_{\lambda \in{\C}}M_{\lambda},
\end{equation*}
where $M_{\l}=\{w\in M|L(0)w=\l w\}.$ Moreover we require that
$\dim M_{\l}$ is finite and for fixed $\l,$ $M_{\frac{n}{T}+\l}=0$
for all small enough integers $n.$
\end{de}

In this situation, if $w\in M_{\l}$ we refer to $\l$ as the {\em weight} of
$w$ and write $\l=\wt w.$  The totality of $g$-twisted $V$-modules defines a full subcategory of the category of $g$-twisted weak
$V$-modules.

Denote the set of nonnegative integers by $\Z_+.$
An important and related class of g-twisted modules is the following.

\begin{de}\label{admissible}
 An {\em admissible} $g$-twisted $V$-module
is a  weak $g$-twisted $V$-module $M$ that carries a
$\frac1T{\Z}_{+}$-grading
\begin{equation*}
M=\bigoplus_{n\in\frac{1}{T}\Z_+}M(n),
\end{equation*}
which satisfies the following
\begin{equation*}
v_mM(n)\subseteq M(n+\wt v-m-1)
\end{equation*}
for homogeneous $v\in V,$ $m\in \frac{1}{T}{\Z}.$
\ed

If $g=Id_V$  we have the notions of  weak, ordinary and admissible $V$-modules \cite{DLM1}.

If $M=\bigoplus_{n\in \frac{1}{T}\Z_+}M(n)$
is an admissible $g$-twisted $V$-module, the contragredient module $M'$
is defined as follows:
\begin{equation*}
M'=\bigoplus_{n\in \frac{1}{T}\Z_+}M(n)^{*},
\end{equation*}
where $M(n)^*=\Hom_{\C}(M(n),\C).$ The vertex operator
$Y_{M'}(a,z)$ is defined for $a\in V$ via
\begin{eqnarray*}
\langle Y_{M'}(a,z)f,u\rangle= \langle f,Y_M(e^{zL(1)}(-z^{-2})^{L(0)}a,z^{-1})u\rangle,
\end{eqnarray*}
where $\langle f,w\rangle=f(w)$ is the natural paring $M'\times M\to \C.$
One can prove (cf. \cite{FHL}, \cite{X}) the following:
\bl{l3.8} $(M',Y_{M'})$ is an admissible $g^{-1}$-twisted $V$-module.
\el
We can also define the contragredient module $M'$ for a $g$-twisted $V$-module $M.$ In this case,
$M'$ is a $g^{-1}$-twisted $V$-module. Moreover, $M$ is irreducible if and only if $M'$ is irreducible.

\begin{de}
A \voa $V$ is called $g$-rational, if the  admissible $g$-twisted module category is semisimple. $V$ is called rational if $V$ is $1$-rational.
\end{de}

The following lemma about $g$-rational \voas \  is well-known \cite{DLM2}.
\begin{lem}
If $V$ is $g$-rational and $M$ is an irreducible admissible $g$-twisted $V$-module, then

(1) $M$ is an $g$-twisted $V$-module and  there exists a number $\l \in \mathbb{C}$ such that  $M=\oplus_{n\in \frac{1}{T}\mathbb{Z_+}}M_{\l +n}$ where $M_{\lambda}\neq 0.$ The $\l$ is called the conformal weight of $M;$

(2) There are only finitely many irreducible admissible  $g$-twisted $V$-modules up to isomorphism.
\end{lem}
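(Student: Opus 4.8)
The plan is to route the whole argument through the twisted Zhu algebra $A_g(V)$ of \cite{DLM2}. Recall that $A_g(V)$ is an associative $\C$-algebra realized as a quotient of $V$; that for every admissible $g$-twisted $V$-module $N=\oplus_{n\in\frac1T\Z_+}N(n)$, normalized by a shift of the grading so that $N(0)\neq 0$, the bottom space $N(0)$ is an $A_g(V)$-module via the assignment $a\mapsto o(a)$ with $o(a)=a_{\wt a-1}$ for homogeneous $a\in V$; and that $N\mapsto N(0)$ induces a bijection between isomorphism classes of irreducible admissible $g$-twisted $V$-modules and isomorphism classes of irreducible $A_g(V)$-modules, with inverse given by passing to the irreducible quotient of the generalized Verma-type module $\bar M(U)$ attached to an $A_g(V)$-module $U$. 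The one place where $g$-rationality enters, and the crux of the lemma, is the structural fact (again from \cite{DLM2}) that $g$-rationality forces $A_g(V)$ to be a finite-dimensional semisimple algebra, and that the graded pieces of every irreducible admissible $g$-twisted module are finite-dimensional.

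I would then deduce (2) at once: a finite-dimensional algebra has only finitely many irreducible modules up to isomorphism, and the bijection above transports this finiteness to irreducible admissible $g$-twisted $V$-modules. For (1), take $M$ irreducible admissible, normalized so that $M(0)\neq 0$, so that $M(0)$ is a finite-dimensional irreducible $A_g(V)$-module. Since $\omega\in V_2$, admissibility gives $L(0)=\omega_1=o(\omega)$, and this operator preserves each $M(n)$. The commutator identity $[L(0),a_n]=(\wt a-n-1)a_n$ for homogeneous $a\in V$ shows in particular that $[L(0),o(a)]=0$, so $L(0)$ restricted to $M(0)$ commutes with the $A_g(V)$-action; by Schur's lemma over $\C$ (using $\dim M(0)<\infty$) it acts there as a scalar $\lambda\in\C$. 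To propagate this across $M$, observe that for any homogeneous $L(0)$-eigenvector $w\in M(n)$ of eigenvalue $\mu$ the same commutator identity, together with $a_m M(n)\subseteq M(n+\wt a-m-1)$, makes $a_m w$ again an $L(0)$-eigenvector and leaves the quantity $\mu-\lambda-n$ unchanged; hence the span of all homogeneous $L(0)$-eigenvectors for which this quantity vanishes is a submodule of $M$ containing $M(0)$, so it is all of $M$ by irreducibility. This shows simultaneously that $L(0)$ acts semisimply on $M$, that $M(n)$ is exactly the $L(0)$-eigenspace of eigenvalue $\lambda+n$, that $\dim M_{\lambda+n}=\dim M(n)<\infty$, and that the grading is bounded below; thus $M=\oplus_{n\in\frac1T\Z_+}M_{\lambda+n}$ with $M_\lambda=M(0)\neq 0$ is an ordinary $g$-twisted $V$-module of conformal weight $\lambda$.

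The main obstacle — and the only step I would import wholesale rather than reprove — is the structural theorem that $g$-rationality makes $A_g(V)$ finite-dimensional and semisimple and makes the graded pieces of irreducible admissible modules finite-dimensional. The idea in \cite{DLM2} is to exploit the assumed semisimplicity of the entire admissible $g$-twisted module category: applied to the modules $\bar M(U)$ it forces every $A_g(V)$-module to be completely reducible, and combined with a finiteness analysis of the higher twisted Zhu algebras $A_{g,n}(V)$ this yields the Wedderburn decomposition of $A_g(V)$ as well as the finiteness of the graded dimensions. Everything else above is then elementary: Schur's lemma together with the bookkeeping of the Zhu-algebra correspondence.
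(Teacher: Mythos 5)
Your proposal is correct and takes essentially the same route the paper relies on: the paper gives no proof of this lemma, simply citing \cite{DLM2}, and your argument via the twisted Zhu algebra $A_g(V)$ (the bijection with irreducible $A_g(V)$-modules for part (2), and Schur's lemma on $M(0)$ together with the commutator $[L(0),a_m]=(\wt a-m-1)a_m$ to propagate the $L(0)$-eigenvalue $\lambda+n$ across the grading for part (1)) is exactly the standard proof in that reference. The only caveat is that the structural facts you import wholesale---$A_g(V)$ finite-dimensional semisimple and finite-dimensionality of the graded pieces of irreducible admissible modules---already carry most of the weight of (1), so your proof is a reduction to \cite{DLM2} plus correct elementary bookkeeping, which is precisely the status the lemma has in the paper.
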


For a simple \voa $V$  which has finitely many irreducible modules, we always denote $M^0, M^1, \cdots, \ M^d$ all the inequivalent irreducible $V$-modules with $M^0\cong V$. And let
 $\lambda_i$ denote the conformal weight of $M^i.$ The following theorem  is proved in \cite{DLM4}.
\begin{thm}
Let $V$ be a rational and $C_2$-cofinite vertex operator algebra, then $\l_i\in \Q$, $\forall i=0,\cdots, d.$
\end{thm}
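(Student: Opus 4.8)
\noindent\textit{Proof plan.}\quad We may assume $V$ is of CFT type, so that $M^0=V$ has conformal weight $\lambda_0=0$; writing $\mu_i=\lambda_i-c/24$, it suffices to prove $\mu_i\in\Q$ for all $i$, since then $\mu_0=-c/24$ forces $c\in\Q$ and hence $\lambda_i=\mu_i-\mu_0\in\Q$. The engine is Zhu's modular invariance theorem, which applies precisely because $V$ is rational and $C_2$-cofinite: the normalized characters $\ch_q M^i$ converge to holomorphic functions $\chi_i(\tau)$ on the upper half-plane $\mathbb H$, the space $W=\spa\{\chi_0,\dots,\chi_d\}$ is stable under the weight-zero action of $SL_2(\Z)$, and in the spanning set $\{\chi_i\}$ the generator $T\colon\tau\mapsto\tau+1$ acts by $\chi_i(\tau+1)=e^{2\pi i\mu_i}\chi_i(\tau)$. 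Thus the statement is equivalent to ``$\rho(T)$ has finite order'', where $\rho\colon SL_2(\Z)\to GL(W)$ is the associated representation.

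The idea is to realize the $\mu_i$ as the exponents at the cusp of a modular linear differential equation defined over $\Q$. First, using the $C_2$-cofiniteness recursion of Zhu and its refinements in \cite{DLM1}, \cite{DLM4}, the $\chi_i$ satisfy a common monic ordinary differential equation of order $d+1$, built from the modular (Serre--Ramanujan) derivative, whose coefficients are holomorphic modular forms for $SL_2(\Z)$ (i.e.\ polynomials in $E_4,E_6$) and whose solution space is exactly $W$. Next I would show this equation can be taken \emph{defined over $\Q$}: solving for the coefficient modular forms by Cramer's rule in terms of the $\chi_i$ and their derivatives, and using that the Fourier coefficients of $\chi_i$ are the non-negative integers $\dim M^i_{\lambda_i+n}$ while $E_2,E_4,E_6$ have integral Fourier expansions, one finds each coefficient is a modular form with rational Fourier coefficients, hence lies in $\Q[E_4,E_6]$. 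Rewriting the equation near $q=0$ in terms of the ordinary derivative $q\,d/dq$ produces an operator $\theta^{d+1}+\sum_{j=1}^{d+1}b_j(q)\theta^{d+1-j}$ with $b_j\in\Q[[q]]$, whose indicial polynomial at $q=0$ has coefficients in $\Q$, degree $d+1$, and roots exactly $\mu_0,\dots,\mu_d$ (no logarithmic solutions occur, since $W$ already provides $d+1$ honest solutions, so any resonant exponents are apparent). Therefore each $\mu_i$, and with it $c$ and every $\lambda_i$, is algebraic of degree at most $d+1$.

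Finally I would upgrade algebraicity to rationality by a Galois/arithmeticity argument. For $\sigma\in\Gal(\overline\Q/\Q)$, applying $\sigma$ to $q$-expansions sends $\chi_i=q^{\mu_i}\sum_{n\ge0}(\dim M^i_{\lambda_i+n})\,q^{n}$ to $q^{\sigma(\mu_i)}\sum_{n\ge0}(\dim M^i_{\lambda_i+n})\,q^{n}$, which is again a solution of the same $\Q$-rational equation with the same non-negative integer Fourier coefficients; since the solution space is $W=\spa\{\chi_j\}$ and the leading exponents $\mu_j$ determine the $\chi_j$ up to scalars, comparing leading terms forces $\sigma(\mu_i)=\mu_i$, so $\mu_i\in\Q$. (Alternatively one invokes that the equation is arithmetic/globally nilpotent, hence has rational local exponents.) I expect this last descent from $\overline\Q$ to $\Q$ to be the main obstacle: one must combine the $\Q$-rationality of the modular differential equation with the \emph{positivity and integrality} of the Fourier coefficients to exclude a nontrivial Galois permutation of the $\mu_i$; a secondary technical point is the careful verification that the modular differential equation is genuinely defined over $\Q$, via the Wronskian computation together with the identification of $\Q$-rational holomorphic modular forms with $\Q[E_4,E_6]$.
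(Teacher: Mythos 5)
Two preliminary remarks: the paper itself does not prove this theorem --- it quotes it from \cite{DLM4}, where the argument (going back to Anderson and Moore) is of exactly the kind you sketch: the characters span a finite-dimensional $SL(2,\Z)$-invariant space, satisfy a modular linear differential equation, and one reads off the exponents at $q=0$. So your strategy is the right one in spirit, but the two steps that carry all the weight fail as you state them. First, the $\Q$-rationality of the equation does not follow from Cramer's rule plus integrality of the $\dim M^i_{\lambda_i+n}$: each $\chi_i=q^{\mu_i}\sum_{n\ge 0}a_nq^n$ carries the prefactor $q^{\mu_i}$ with the very exponent you are trying to control, and differentiation brings down factors $\mu_i+n$, so the Wronskian ratios have $q$-expansions with coefficients a priori only in $\Q(\mu_0,\dots,\mu_d)$, not in $\Q$. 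In fact rationality of the indicial polynomial is essentially equivalent to rationality of the elementary symmetric functions of the $\mu_i$, so as written this step is close to circular; what comes cheaply is only $\sum_i\mu_i\in\Q$ (valence formula applied to the Wronskian), not the full indicial polynomial.

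Second, even granting a $\Q$-rational equation, your Galois step shows only that $\sigma(\mu_i)$ is again a root of the indicial polynomial, i.e.\ that $\sigma$ permutes $\{\mu_0,\dots,\mu_d\}$; that gives algebraicity (which you already had) but not rationality, and ``comparing leading terms'' cannot force $\sigma(\mu_i)=\mu_i$: positivity and integrality of the Fourier coefficients hold for every $\chi_j$ alike, so they do not distinguish a nontrivial permutation. The statement you relegate to a parenthesis --- that the equation is globally nilpotent and that such equations have rational exponents (Katz) --- is not an alternative but is the actual content of the Anderson--Moore argument on which \cite{DLM4} rests; proving nilpotent $p$-curvature for almost all $p$ from a full set of solutions with integral coefficients is the genuinely arithmetic input, and it in turn presupposes that the equation is defined over a number field, so the two gaps compound rather than cancel. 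There are also smaller repairs: distinct $M^i$ can have equal characters, so one must work with a basis of $W$ (the Wronskian of all $d+1$ characters may vanish identically); the Wronskian can vanish in the upper half-plane, so the coefficients are in general only meromorphic level-one forms; and the opening reduction ``we may assume $V$ is of CFT type'' is unjustified --- though also unnecessary, since $\lambda_0\in\Z$ because $V$ is $\Z$-graded, so $\mu_0\in\Q$ already yields $c\in\Q$ and then $\lambda_i=\mu_i+c/24\in\Q$.
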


Besides rationality, there is another important concept called $C_2$-cofiniteness \cite{Z}.
\begin{de}
We say that a VOA $V$ is $C_2$-cofinite if $V/C_2(V)$ is finite dimensional, where $C_2(V)=\langle v_{-2}u|v,u\in V\rangle.$
\end{de}

\begin{de}
A \voa $V$ is called regular if every weak $V$-module is a direct sum of irreducible $V$-modules.
\end{de}

\begin{rem}
It is proved in \cite{ABD} that if  $V$ is of CFT type, then regularity is equivalent to rationality and $C_2$-cofiniteness. Also $V$ is regular if and only if the weak module category is semisimple \cite{DY}.
\end{rem}

Let $V$ be a vertex operator algebra. In \cite{DLM3} a series of associative algebras $A_n(V)$ were introduced for nonnegative integers $n$. In the case $n=0,$ $A_0(V)=A(V)$ is the Zhu's algebra as defined in \cite{Z}. We briefly review $A_n(V)$ here. For homogeneous  $u,~v\in V$ we define
\begin{equation*}
\begin{split}
u\circ_n v&=\Res_zY(u,z)v\frac{(1+z)^{wt u +n}}{z^{2n+2}},\\
u*_n v     &= \sum_{m=0}^n (-1)^m
\left(
\begin{array}{ccc}
 m+n \\
 n
\end{array}
\right)
\Res_z Y(u,z)\frac{(1+z)^{wt u+n}}{z^{n+m+1}}.\\
\end{split}
\end{equation*}
Extend $\circ_n$ and $*_n$ linearly to obtain  bilinear products on $V.$ We let $O_n(V)$ be the linear span of all $u\circ_n v$ and $L(-1)u+L(0)u .$ We have (see \cite{DLM3}, \cite{Z}):
\begin{thm} Let $V$ be a \voa  and $M=\oplus_{n=0}^\infty M(n)$ be an admissible $V$-module. Set  $A_n(V)=V/O_n(V).$ Then

(1) For any $n\in \Z_+, $  $A_n(V)$  is an associative algebra with respect to $*_n;$

(2) For $0\leq m\leq n,$ $M(m)$ is an $A_n(V)$-module;

(3) If $V$ is rational, then for any $n\in \Z_+,$ $A_n(V)$ is a finite dimensional semisimple associative algebra;

(4) If $V$ is a simple rational \voa and assume $\{M^i| i=0, \cdots, d\}$ be the inequivalent irreducible $V$-modules,  then $A_n(V)\cong \oplus_{i=0}^d\oplus_{m\leq n}\End M^i(m).$

\end{thm}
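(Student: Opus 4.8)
The plan is to follow the construction of the algebras $A_n(V)$ in \cite{DLM3}, which extends Zhu's treatment of $A_0(V)=A(V)$ in \cite{Z}, and then to layer on the consequences of rationality. Part (1) is the combinatorial heart. The basic facts are: $u\circ_n v\in O_n(V)$ by definition, $(L(-1)+L(0))v\in O_n(V)$, and the class of $\mathbf 1$ is a two-sided identity for $*_n$ modulo $O_n(V)$. One must verify that $*_n$ descends to the quotient, i.e.\ that $u*_n(v\circ_n w)\in O_n(V)$ and $(u\circ_n v)*_n w\in O_n(V)$, and that $*_n$ is associative modulo $O_n(V)$. Each of these is proved by expanding the relevant residues, invoking the iterate form $Y(Y(u,z_0)v,z_2)$ of the Jacobi identity to pass between products and iterates, and then using a change of variables together with a reorganization of the binomial series $(1+z)^{\wt u+n}$ to bring the expression into the span of the relations $u\circ_n(\cdot)$ and $(L(-1)+L(0))(\cdot)$. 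This step is lengthy but entirely formal and does not use rationality, so I would carry out one representative identity and cite \cite{DLM3} for the rest.

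For part (2), define the action of the class of a homogeneous $v\in V$ on $w\in M(m)$ by $o(v)w$, where $o(v)=v_{\wt v-1}$ is the unique component of $Y_M(v,z)$ preserving each $M(j)$ (from the grading axiom $v_kM(j)\subseteq M(j+\wt v-k-1)$), extended linearly in $v$. Two things must be checked: that $O_n(V)$ annihilates $M(m)$ whenever $0\le m\le n$, and that $o(u*_n v)$ and $o(u)o(v)$ coincide on such $M(m)$. For the first, the generating series of $o(u\circ_n v)$ is a combination of modes $u_k$ that lower the $M(\cdot)$-grading by strictly more than $m$, hence send $M(m)$ into pieces of negative degree, which vanish; and $o(L(-1)v+L(0)v)=0$ on any $\C$-graded module. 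The second is the same residue manipulation as in part (1), now read off inside $\End M$ rather than $V$. This makes each $M(m)$ with $0\le m\le n$ into an $A_n(V)$-module.

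Parts (3) and (4) I would prove together, since the explicit decomposition is what yields both finite-dimensionality and semisimplicity. The input from \cite{DLM3} is the construction, for each $A_n(V)$-module $U$, of an admissible $V$-module $\bar M_n(U)$ generated by $U$ placed in degrees $0$ through $n$, together with the statement that $U\mapsto\bar M_n(U)$ and $M\mapsto\bigoplus_{m\le n}M(m)$ set up a correspondence identifying the irreducible $A_n(V)$-modules with the nonzero graded pieces $M^i(m)$, $0\le i\le d$, $0\le m\le n$, of the irreducible admissible $V$-modules. When $V$ is rational, every admissible $V$-module is a direct sum of irreducibles; running an arbitrary $A_n(V)$-module $U$ through $U\mapsto\bar M_n(U)\mapsto\bigoplus_{m\le n}\bar M_n(U)(m)$ then exhibits $U$ as a summand of a semisimple $A_n(V)$-module, so $A_n(V)$-modules are completely reducible. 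For $V$ simple rational, the modules $M^i(m)$ (with $m\le n$ and $M^i(m)\ne 0$) are pairwise non-isomorphic: for fixed $i$ the class of $\omega$ acts on $M^i(m)$ as $L(0)=\lambda_i+m$, which separates distinct $m$, while distinct $i$ give non-isomorphic modules because $\bar M_n$ recovers the whole irreducible $V$-module $M^i$ from $M^i(m)$. Since these exhaust the simple $A_n(V)$-modules, the Artin--Wedderburn theorem gives $A_n(V)\cong\bigoplus_{i=0}^d\bigoplus_{m\le n}\End M^i(m)$; in particular $A_n(V)$ is finite-dimensional and semisimple, and its natural action on $\bigoplus_{i,\,m\le n}M^i(m)$ is faithful.

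The main obstacle is not any single computation but the machinery behind parts (3) and (4): constructing the functor $U\mapsto\bar M_n(U)$ and proving it behaves well enough to transport complete reducibility from $V$-modules to $A_n(V)$-modules, and, within part (4), showing both that the $M^i(m)$ are mutually non-isomorphic as $A_n(V)$-modules and that they are \emph{all} of the simple $A_n(V)$-modules. The combinatorial content of parts (1) and (2), though the longest to write out, is routine and can be imported essentially verbatim from \cite{DLM3} and \cite{Z}.
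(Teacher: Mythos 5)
The paper offers no proof of this theorem at all---it is quoted as background with the citation ``see \cite{DLM3}, \cite{Z}''---and your outline reproduces exactly the argument of those references: the $O_n(V)$-ideal and associativity computations for (1), the $o(v)$-action for (2), and for (3)--(4) the functor $U\mapsto\bar M_n(U)$ together with complete reducibility from rationality and Artin--Wedderburn, so this is the same (cited) approach and the plan is sound. One small caution: in (2) the operator $o(u\circ_n v)$ preserves the grading of $M$, so your degree-lowering gloss is not literally the reason it annihilates $M(m)$ for $m\le n$; that vanishing really does require the Jacobi-identity/residue rewriting that you in any case defer to \cite{DLM3}.
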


\subsection{Modular Invariance of Trace Functions}

We now turn our discussion to the modular-invariance property in VOA theory. The most basic function is the formal character of a $g$-twisted $V$-module $M=\oplus_{n\in\frac{1}{T}\Z_+} M_{\l+n}.$ We define the formal character of $M$ as
\begin{equation*}
\ch_q M=\tr_{M}q^{L(0)-c/24}=q^{\lambda-c/24}\sum_{n\in\frac{1}{T}\Z_+} (\dim M_{\l+n})q^n,
\end{equation*}
where $\lambda$ is the conformal weight of $M$. It is proved in \cite{Z} and \cite{DLM4} that
$\ch_qM$ converges to a holomorphic function on the domain $|q|<1$ if $V$ is $C_2$-cofinite.
We sometimes also use $Z_M(\tau)$ to denote the holomorphic function $\ch_q M.$
Here and below $\tau$ is in the complex upper half-plane $\textbf{H}$ and $q=e^{2\pi i\tau}.$

For any homogeneous element $v\in V$ we define a trace function associated to $v$ as follows:
\begin{equation*}
Z_M(v,\tau)=\tr_{M}o(v)q^{L(0)-c/24}=q^{\lambda-c/24}\sum_{n\in\frac{1}{T}\Z_+} \tr_{M_{\l+n}}o(v)q^n,
\end{equation*}
where $o(v)=v(\wt v-1)$ is the degree zero operator of $v$.

There is a natural action of $\Aut(V)$ on twisted modules \cite{DLM4}. Let $g, h$ be two automorphisms of $V$ with $g$ of finite order. If $(M, Y_g)$ is a weak $g$-twisted $V$-module, there is a weak $hgh^{-1}$-twisted  $V$-module $(M\circ h, Y_{hgh^{-1}})$ where $M\circ h\cong M$ as vector spaces and
\begin{equation*}
Y_{hgh^{-1}}(v,z)=Y_g(h^{-1}v,z)
\end{equation*}
for $v\in V.$
This defines a left action of $\Aut(V)$ on weak twisted $V$-modules and on isomorphism
classes of weak twisted $V$-modules. Symbolically, we write
\begin{equation*}
h\circ (M,Y_g)=(M\circ h,Y_{hgh^{-1}})=h\circ M,
\end{equation*}
where we sometimes abuse notation slightly by identifying $(M, Y_g)$ with the isomorphism
class that it defines.

If $g, h$ commute, obviously $h$ acts on the $g$-twisted modules as above.
We set $\mathscr{M}(g)$ to be the equivalence classes of irreducible $g$-twisted $V$-modules and $\mathscr{M}(g,h)=\{M \in \mathscr{M}(g)| h\circ M\cong M\}.$ Then for any $M\in \mathscr{M}(g,h),$ there is a $g$-twisted $V$-module isomorphism
\begin{equation*}
\varphi(h) : h\circ M\to M.
\end{equation*}
The linear map $\varphi(h)$ is unique up to a nonzero scalar.
We set
\begin{equation*}
Z_M(g,h,\tau)=\tr_{_M}\varphi(h) q^{L(0)-c/24}=q^{\lambda-c/24}\sum_{n\in\frac{1}{T}\Z_+}\tr_{_{M_{\l+n}}}\varphi(h)q^{n}.
\end{equation*}
Since $\varphi(h)$ is unique up to a nonzero scalar, $Z_M(g,h,\tau)$ is also defined up to a nonzero scalar. The choice of the scalar does not interfere with any of the results in this paper.
For a homogeneous element $v\in V$ and any commuting pair $(g,h)$ we define
\begin{equation*}
T_M(v,g,h,\tau)=q^{\l-c/24}\sum_{n\in\frac{1}{T}\Z_+}\tr_{_{M_{\l+n}}} o(v)\varphi(h) q^{n},
\end{equation*}
where $M\in \mathscr{M}(g,h)$. It is easy to see when $v=\1,$ $T_M(1,g,h,\tau)=Z_M(g,h,\tau).$

Zhu has introduced a second vertex operator algebra $(V, Y[~], \1, \tilde{\omega})$ associated to $V$
in \cite{Z}.  Here $\tilde{\omega}=\omega-c/24$ and
$$Y[v,z]=Y(v,e^z-1)e^{z\cdot \wt v}=\sum_{n\in \Z}v[n]z^{n-1}$$
for homogeneous $v.$ We also write
$$Y[\tilde{\omega},z]=\sum_{n\in \Z}L[n]z^{-n-2}.$$

We must take care to distinguish between the notion of conformal weight in the original \voa  and in the second \voa $(Y,Y[~],\1,\tilde{\omega}).$ If $v\in V$ is homogeneous in the second vertex operator algebra, we denote its weight by $\wt [v].$
For such $v$ we define an action of the modular group $\Gamma$ on $T_{M}$ in a familiar way, namely
\begin{equation*}
T_M|_\gamma(v,g,h,\tau)=(c\tau+d)^{-\wt[v]}T_M(v,g,h,\gamma \tau),
\end{equation*}
where $\gamma \tau$ is the M\"obius transformation, that is
\begin{equation}\label{e1.11}
\gamma: \tau\mapsto\frac{ a\tau + b}{c\tau+d},\ \ \ \gamma=\left(\begin{array}{cc}a & b\\ c & d\end{array}\right)\in\Gamma=SL(2,\Z).
\end{equation}
Let $P(G)$ denote the commuting pairs of elements in a group $G.$ We let $\gamma\in \Gamma$ act on the right of $P(G)$ via
$$(g, h)\gamma = (g^ah^c, g^bh^d ).$$
The following theorem is proved in \cite{Z}, \cite{DLM4}.
\begin{thm}\label{minvariance}
Assume $(g, h)\in P(\Aut(V))$ such that the orders of $g$ and $h$ are finite. Let $\gamma =\left(\begin{array}{cc}a & b\\ c & d\end{array}\right)\in \Gamma.$ Also assume that $V$ is $g^ah^c$-rational and $C_2$-cofinite. If $M^i$ is an irreducible $h$-stable $g$-twisted $V$-module, then
\begin{equation*}
T_{M^i}|_{\gamma}(v,g,h,\tau)=\sum_{N_j\in \mathscr{M}(g^ah^c,{g^bh^d)}} \gamma_{i,j}(g,h) T_{N_j}(v,(g, h)\gamma, ~\tau),
\end{equation*}
where $\gamma_{i,j}(g,h)$ are some complex numbers independent of the choice of $v\in V$.
\end{thm}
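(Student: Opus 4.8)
\noindent{\bf Proof strategy.} Since $\Gamma$ is generated by $T\colon\tau\mapsto\tau+1$ and $S\colon\tau\mapsto-1/\tau$ and the slash action respects composition, the plan is to establish the statement for $\gamma=T$ and for $\gamma=S$, then deduce the general case by iteration---provided one checks at each step that the transformed function again lies in the span of the trace functions for the new commuting pair. For $\gamma=T$ one has $c\tau+d=1$ and $(g,h)T=(g,gh)$; here I would compare the operator $q^{L(0)-c/24}$ at $\tau+1$ with its value at $\tau$, the two differing by the monodromy factor $e^{2\pi i(L(0)-c/24)}$, whose operator part is $e^{2\pi iL(0)}$. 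Since the fractional $z$-powers appearing in $Y_{M^i}(u,z)$ for $u\in V^r$ force $e^{2\pi iL(0)}$ to implement the action of $g$ on the $g$-twisted module $M^i$ up to a scalar, one obtains $\varphi(h)e^{2\pi iL(0)}=\xi\,\varphi(gh)$ on $M^i$ for some constant $\xi$ (note $M^i$, being $g$-twisted and $h$-stable, is automatically $gh$-stable). Thus $T_{M^i}(v,g,h,\tau+1)$ is a nonzero scalar multiple of $T_{M^i}(v,g,gh,\tau)$, so the $T$-case holds with the sum reduced to a single term.

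The case $\gamma=S$, where $(g,h)S=(h,g^{-1})$ and $c\tau+d=\tau$, is the substance of the theorem, and I would organize it around three ingredients. \emph{(i) Convergence and recursion.} First, $C_2$-cofiniteness gives that the $q$-series defining $T_{M^i}(v,g,h,\tau)$ converges to a holomorphic function on $\mathbf{H}$. Then, from the Jacobi identity, the $L(-1)$-property, and standard properties of the Weierstrass functions, one derives Zhu's genus-one recursion relations in the twisted setting: for $u$ homogeneous in the second vertex operator algebra $(V,Y[~],\1,\tilde{\omega})$ and $v\in V$,
$$T_{M^i}(u[-1]v,g,h,\tau)=\tr_{M^i}o(u)o(v)\varphi(h)q^{L(0)-c/24}+\sum_{k\ge1}P_k(g,h,\tau)\,T_{M^i}(u[2k-1]v,g,h,\tau),$$
with an analogous formula for $u[-n]v$, where the $P_k$ are twisted Eisenstein series of weight $2k$. \emph{(ii) Finite-dimensionality.} Define an \emph{abstract $(g,h)$ one-point function} to be a linear map $S\colon V\to\mathcal{O}(\mathbf{H})$ obeying these recursion relations, the $L[-1]$-derivative relation, and the $L[0]$-grading condition. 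Using $C_2$-cofiniteness together with the recursion attached to the second conformal vector $\tilde{\omega}$ (whose zero-mode $o(\tilde{\omega})=L(0)-c/24$ is the modular derivative), one shows such an $S$ is determined by finitely many of its values, which in turn satisfy a linear ODE system with modular-form coefficients; hence the space $\mathcal{C}(g,h)$ of such functions is finite-dimensional. \emph{(iii) Covariance and realization.} One checks directly that the slash action carries $\mathcal{C}(g,h)$ into $\mathcal{C}((g,h)\gamma)$, since the $P_k$ transform as weight-$2k$ modular forms while the twist datum passes from $(g,h)$ to $(g,h)\gamma$; and $g$-rationality---via the finite-dimensionality and semisimplicity of the twisted Zhu algebras $A_{g,N}(V)$, whose modules are governed by the irreducible admissible $g$-twisted modules---forces every element of $\mathcal{C}(g,h)$ to be a linear combination of the trace functions $T_M(\cdot,g,h,\tau)$, $M\in\mathscr{M}(g,h)$, with the $h$-stability and the choice of $\varphi(h)$ singling out exactly these modules.

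Granting (i)--(iii), the conclusion is immediate: $T_{M^i}|_\gamma(\cdot,g,h,\tau)$ lies in $\mathcal{C}((g,h)\gamma)=\mathcal{C}(g^ah^c,g^bh^d)$ and is therefore a combination $\sum_{N_j\in\mathscr{M}(g^ah^c,g^bh^d)}\gamma_{i,j}(g,h)\,T_{N_j}(\cdot,(g,h)\gamma,\tau)$, the coefficients $\gamma_{i,j}(g,h)$ being the coordinates of this function in a fixed basis of $\mathcal{C}((g,h)\gamma)$ and hence independent of $v$. I expect the main obstacle to be the realization step in (iii)---namely the assertion $\dim\mathcal{C}(g,h)=|\mathscr{M}(g,h)|$, so that every abstract one-point function arises from honest modules. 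This is exactly where $g$-rationality is indispensable and where one needs a delicate analysis of how the recursion relations interact with the algebras $A_{g,N}(V)$; by comparison, the convergence statement and the derivation of the recursion relations, though lengthy, are comparatively routine.
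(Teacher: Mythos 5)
The paper itself gives no proof of this theorem---it is quoted from \cite{Z} and \cite{DLM4}---and your outline reproduces essentially the argument of those references: holomorphy of the trace functions from $C_2$-cofiniteness, the twisted Zhu recursion with twisted Eisenstein series, finite-dimensionality of the space $\mathcal{C}(g,h)$ of abstract $(g,h)$ one-point functions via the ODE attached to $\tilde{\omega}$, modular covariance of these spaces, and the identification of $\mathcal{C}(g^ah^c,g^bh^d)$ with the span of the $T_{N_j}$ using $g^ah^c$-rationality and the twisted analogues of the algebras $A_n(V)$. One caution: the reduction to the generators $S$ and $T$ must be performed at the level of the abstract spaces $\mathcal{C}(g,h)$ (as in \cite{DLM4}), not by iterating the trace-function statement itself, since the theorem assumes rationality only for the single twisted sector $g^ah^c$ and the intermediate pairs arising from a word in $S,T$ need not be rational; your step (iii) as formulated already accommodates this, so the architecture is the same as in the cited proof.
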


\begin{rem}\label{trans}
In the case  $g=h=1$, and $\gamma=S=\left(\begin{array}{cc}0 & -1\\ 1 & 0\end{array}\right)$ we
 have:
\begin{equation}\label{S-tran}
Z_{M^i}(v,-\frac{1}{\tau})=\tau^{\wt[v]}\sum_{j=0}^d S_{i,j} Z_{M^j}(v,\tau).
\end{equation}
The matrix $S=(S_{i,j})$ is called $S$-matrix which is independent of the choice of $v.$
\end{rem}

\section{Quantum Dimension}

In this section we define the  quantum dimension for an ordinary  $V$-module $M$ and give some examples.
\subsection{Definition}
\begin{de} Let $V$ be a \voa and $M$ a $V$-module such that $Z_V(\tau)$ and $Z_M(\tau)$ exist. The quantum dimension of $M$ over $V$ is defined as
\begin{equation}
\qdim_{V}M=\lim_{y\to 0}\frac{Z_M(iy)}{Z_V(iy)},
\end{equation}
where $y$ is real and positive.
\end{de}

\begin{rem} \label{rem def}
Sometimes we use an alternating definition which involves the $q$-characters:
\begin{equation}
\qdim_VM=\lim_{q\to 1^-}\frac{\ch_qM}{\ch_q V}.
\end{equation}
This is because we know as $\tau=iy \to 0,$ $q=e^{2\pi i \tau}=e^{-2\pi y} \to 1^-.$ This definition
of quantum dimension seems well known in the literature.
\end{rem}

\begin{rem}
Quantum dimension is formally defined. Intuitively, for an arbitrary \voa $V$ and any $V$-module $M$,  $\qdim_{V}M$ might not exist.   But we will prove that for rational and $C_2$-cofinite \voas, quantum dimensions do exist.
 \end{rem}

\begin{rem} \label{rem positive dim}
If $\qdim_{V}M$ exists, then it is nonnegative.
\end{rem}

\begin{rem}\label{M'}
If $\qdim_{V}M$ exists, then
\begin{equation}
\qdim_V M=\qdim_V M'.\\
\end{equation}
\end{rem}

In the definition of quantum dimensions one needs the convergence of both $\ch_qV$ and $\ch_qM.$ But
there is no theorem which guarantees the convergence of these formal $q$-characters for an arbitrary vertex operator algebra. It is natural to seek other equivalent definition of quantum dimensions without using the
convergence of formal characters.

\begin{prop}\label{equidef1}
Let $V=\oplus_{n=0}^{\infty}V_n$ be a vertex operator algebra, and let $M\!=\!\oplus_{n=0}^{\infty}M(n)$ be an admissible $V$-module with $\dim M(n)<\infty,$ $\forall n\geq 0.$  If $\lim_{n\to \infty} \frac{\dim M(n)}{\dim V_n}=d$ exists, then

(1) $\lim_{n\to \infty}\frac{\sum_{i=0}^n \dim M(i)}{\sum_{i=0}^n \dim V_i}=d,$

(2) $\lim_{q\to 1^-}\frac{\ch_qM}{\ch_qV}=d.$
\end{prop}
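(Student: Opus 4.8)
The plan is to prove (1) by a Stolz--Ces\`aro argument and then deduce (2) from (1) by Abel summation; both are soft analysis, the only preliminary point being that the relevant denominators diverge. Write $a_n=\dim M(n)$, $b_n=\dim V_n$, and set $A_n=\sum_{i=0}^n a_i$, $B_n=\sum_{i=0}^n b_i$. The hypothesis that $\lim_{n\to\infty}a_n/b_n=d$ exists forces $b_n\neq 0$, hence $b_n\ge 1$, for all large $n$, so $B_n\to\infty$ and $\sum_{n\ge 0}b_n=+\infty$. For (1), given $\e>0$ I would pick $N$ with $|a_n-db_n|\le\e b_n$ for all $n\ge N$; since all $b_i\ge 0$ this gives $\big|\sum_{i=N}^n(a_i-db_i)\big|\le\e B_n$, and feeding this into the identity $A_n-dB_n=(A_{N-1}-dB_{N-1})+\sum_{i=N}^n(a_i-db_i)$ yields
\[
\left|\frac{A_n}{B_n}-d\right|\le\frac{|A_{N-1}-dB_{N-1}|}{B_n}+\e .
\]
Letting $n\to\infty$ gives $\limsup_n|A_n/B_n-d|\le\e$, and since $\e$ is arbitrary, $A_n/B_n\to d$.

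For (2), I would first note that the prefactors $q^{\l-c/24}$ of $\ch_q M$ and $q^{-c/24}$ of $\ch_q V$ combine to $q^{\l}$, which tends to $1$ as $q\to 1^-$, so it suffices to prove $\lim_{q\to 1^-}\big(\sum_{n\ge 0}a_nq^n\big)\big/\big(\sum_{n\ge 0}b_nq^n\big)=d$. Convergence of $\ch_q V$ on $(0,1)$ is implicit in the statement of (2), and since $a_n\le(d+1)b_n$ for large $n$ the series $\sum_n a_nq^n$ then converges there too; also $\sum_{n\ge 0}b_nq^n$ is nondecreasing on $(0,1)$ with $\lim_{q\to 1^-}\sum_{n\ge 0}b_nq^n=\sum_n b_n=+\infty$. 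Abel summation gives $\sum_{n\ge 0}a_nq^n=(1-q)\sum_{n\ge 0}A_nq^n$ and $\sum_{n\ge 0}b_nq^n=(1-q)\sum_{n\ge 0}B_nq^n$, so the quotient equals $\big(\sum_n A_nq^n\big)\big/\big(\sum_n B_nq^n\big)$. Using (1), pick $N$ with $|A_n-dB_n|\le\e B_n$ for $n\ge N$; then, with $C=\sum_{n<N}|A_n-dB_n|$,
\[
\left|\sum_{n\ge 0}A_nq^n-d\sum_{n\ge 0}B_nq^n\right|\le C+\e\sum_{n\ge 0}B_nq^n ,
\]
and dividing by $\sum_{n\ge 0}B_nq^n\to\infty$ and letting $q\to 1^-$ gives $\limsup_{q\to 1^-}|\,\cdot\,-d|\le\e$. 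Since $\e$ is arbitrary, (2) follows. (Alternatively one can bypass (1) and estimate $\sum_n a_nq^n-d\sum_n b_nq^n$ directly, splitting at $N$ with $|a_n-db_n|\le\e b_n$.)

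I do not expect any real obstacle: the content is just the standard fact that Ces\`aro-type and Abel-type averaging return the ordinary limit of a convergent sequence. The one spot demanding a little care is legitimizing the divergence of the denominators $B_n$ and $\sum_n b_nq^n$ — this is why I first extract $\dim V_n\ge 1$ for large $n$ from the hypothesis — together with the observation $a_n=O(b_n)$, which is what lets $\ch_q M$ inherit convergence on $(0,1)$ from $\ch_q V$ in part (2).
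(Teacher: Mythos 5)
Your proposal is correct and follows essentially the same route as the paper: part (1) is exactly the paper's $\varepsilon$-splitting of $\sum_n(a_n-d\,b_n)$ at a threshold $N$, using that $\sum_n b_n$ diverges. The only difference is in (2), where you deduce the Abel-type limit from (1) via $\sum_n a_nq^n=(1-q)\sum_n A_nq^n$, whereas the paper repeats the same splitting directly on the power series (the alternative you mention in parentheses); the two are the same estimate in different clothing, and your version is if anything more careful about points the paper leaves implicit (divergence of the denominators, convergence of $\ch_qM$ from $a_n=O(b_n)$, and the harmless prefactor $q^{\lambda}$). The one thing you silently assume is that $d$ is finite; the paper also disposes of the case $d=\infty$ in a final sentence, so add that trivial case if ``exists'' is meant to include an infinite limit.
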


\pf For simplicity, we denote $\dim M(n)$ by $a_n$ and $\dim V_n$ by $b_n.$ We first deal with the case that $d\ne \infty.$  Then $\lim_{n\to \infty} \frac{a_n} {b_n}=d$ implies for any $\varepsilon>0,$ $\mid\frac{a_n}{b_n}-d\mid< \varepsilon,$ for $n\geq N,$ where N is a sufficiently   large integer. Thus we have
\begin{equation*}
\mid a_n- b_nd\mid<\varepsilon b_n.
\end{equation*}

For (1) we consider the following estimation:
\begin{equation*}\label{inequality1}
\begin{split}
\left|\frac{\sum_{n=0}^m a_n}{\sum_{n=0}^m b_n}-d\right|&= \left |\frac{\sum_{n=0}^m a_n-\sum_{n=0}^m b_nd}{\sum_{n=0}^m b_n}\right|\\
&= \left |\frac{\sum_{n=0}^m (a_n- b_nd)}{\sum_{n=0}^m b_n}\right|\\
&\leq \left |\frac{\sum_{n=0}^N (a_n- b_nd)}{\sum_{n=0}^m b_n}\right|+\left |\frac{\sum_{n=N+1}^m (a_n- b_nd)}{\sum_{n=0}^m b_n}\right|\\
&\leq \left |\frac{\sum_{n=0}^N (a_n- b_nd)}{\sum_{n=0}^m b_n}\right|+\left |\frac{\sum_{n=N+1}^m \varepsilon b_n}{\sum_{n=0}^m b_n}\right|\\
&\leq \left |\frac{\sum_{n=0}^N (a_n- b_nd)}{\sum_{n=0}^m b_n}\right|+\varepsilon.
\end{split}
\end{equation*}
For the first term above, we know  that $\lim_{m\to \infty}\sum_{n=0}^m b_n=\infty,$ and the top term $\sum_{n=0}^N (a_n- b_nd)$ is a fixed number. So we can take $m> N_1,$ for some big enough number $N_1,$ and we get
\begin{equation*}\label{inequality 2}
 \left |\frac{\sum_{n=0}^N (a_n- b_nd)}{\sum_{n=0}^m b_n}\right|<\varepsilon .
\end{equation*}
So for $m>\max\{N,N_1\},$  $\left|\frac{\sum_{n=0}^m a_n}{\sum_{n=0}^m b_n}-d\right|<2\varepsilon ,$ i.e. $$\lim_{n\to \infty}\frac{\sum_{i=0}^n \dim M(i)}{\sum_{i=0}^n \dim V_i}=d.$$

(2) is proved in a similar way:
\begin{equation*}
\begin{split}
\left|\frac{\sum_{n=0}^\infty a_n q^n}{\sum_{n=0}^\infty b_n q^n}-d\right|&=\left|\frac{\sum_{n=0}^\infty (a_n-b_nd) q^n}{\sum_{n=0}^\infty b_n q^n}\right|\\
&=\left|\frac{\sum_{n=0}^N (a_n-b_nd) q^n}{\sum_{n=0}^\infty b_n q^n}+\frac{\sum_{n=N+1}^\infty (a_n-b_nd) q^n}{\sum_{n=0}^\infty b_n q^n}\right|\\
&\leq \left|\frac{\sum_{n=0}^N (a_n-b_nd) q^n}{\sum_{n=0}^\infty b_n q^n}\right|+\sum_{n=N+1}^\infty\left|\frac{ (a_n-b_nd) q^n} {\sum_{n=0}^\infty b_n q^n}\right|\\
&\leq \left|\frac{\sum_{n=0}^N (a_n-b_nd) q^n}{\sum_{n=0}^\infty b_n q^n}\right|+\sum_{n=N+1}^\infty\left|\frac{ (\varepsilon b_n) q^n}{\sum_{n=0}^\infty b_n q^n}\right|\\
&\leq  \left|\frac{\sum_{n=0}^N (a_n-b_nd) q^n}{\sum_{n=0}^\infty b_n q^n}\right|+\varepsilon.
\end{split}
\end{equation*}
Obviously,  $\lim_{q\to 1^-}\left|\frac{\sum_{n=0}^N (a_n-b_nd) q^n}{\sum_{n=0}^\infty b_n q^n}\right| =0.$ Thus when $q$ is close to 1, the first term above is very small and so is
$\left|\frac{\sum_{n=0}^\infty a_n q^n}{\sum_{n=0}^\infty b_n q^n}-d\right|.$ This shows that
$\lim_{q\to 1^-}\frac{\ch_qM}{\ch_qV}=d.$

If $d=\infty,$ it is evident that $\lim_{n\to \infty}\frac{\sum_{i=0}^n \dim M(i)}{\sum_{i=0}^n \dim V_i}=\infty$ and  $\lim_{q\to 1^-}\frac{\ch_qM}{\ch_qV}=\infty.$ The proof is complete. \qed

\begin{rem}\label{remaa} We believe that for a simple vertex operator algebra $V$ and an irreducible $V$-module $M$,
if one of the three limits $\lim_{n\to \infty} \frac{\dim M(n)}{\dim V_n},$ $\lim_{n\to \infty}\frac{\sum_{i=0}^n \dim M(i)}{\sum_{i=0}^n \dim V_i}$ and  $\lim_{q\to 1^-}\frac{\ch_qM}{\ch_qV}$ exists and is finite, then the other limits also exist and all limits are equal. But we cannot establish this result in the paper.  There are counterexamples in pure analysis:
If $a_n,b_n$ are nonnegative real numbers for $n\geq 0$ such that $\lim_{n\to \infty}\frac{\sum_{m=0}^nb_m}{\sum_{m=0}^na_m}$ exists and is finite, but $\lim_{n\to \infty}\frac{b_n}{a_n}=\infty.$ So the equality of these three limits has some deep reason (which we do not know) from the theory of vertex operator algebra.
\end{rem}

If $V$ is a \voa with only finitely many inequivalent irreducible modules, say $ M^0, \cdots, M^d,$ we give the definition of global dimension.
\begin{de}
The global dimension of $V$ is defined as $\glob (V)=\sum_{i=0}^{d}(\qdim_V M^i)^2.$
\end{de}

\begin{rem}
In the index theory of conformal nets, the so called $\mu$-index which is similar to the global dimension defined above plays an important role in orbifold theory. We expect that we can use global dimensions to get some significant results in \voa theory. Same global dimension is also defined in the setting of fusion category \cite{ENO}.
\end{rem}

Here is a result on the global dimension for a rational vertex operator algebra using $A_n(V).$
\begin{prop} Let $V$ be a rational vertex operator algebra and $M^i=\oplus_{n\geq 0}M^i(n)$ be the irreducible modules with $M^i(0)\ne 0$ for all $i.$ We assume that $\lim_{n\to\infty}\frac{\dim M^i(n)}{\dim V_i}$ exists and is finite for all $i.$ Then
$$\glob (V)=\lim_{n\to\infty}\frac{\dim A_n(V)/A_{n-1}(V)}{(\dim V_n)^2}.$$
\end{prop}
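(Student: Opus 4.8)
The plan is to unwind the definition of $\glob(V)$ into a limit of dimension ratios and then compare numerator and denominator termwise using Theorem~2.?(4), which identifies $A_n(V)$ with $\oplus_{i=0}^d\oplus_{m\le n}\End M^i(m)$. First I would record the key consequence of that structure theorem: since $V$ is rational, $A_n(V)$ is semisimple and $\dim A_n(V)=\sum_{i=0}^d\sum_{m=0}^n(\dim M^i(m))^2$, so the graded piece has dimension $\dim A_n(V)/A_{n-1}(V)=\sum_{i=0}^d(\dim M^i(n))^2$. (Strictly, $A_{n-1}(V)$ is not literally a subalgebra of $A_n(V)$, but the natural surjection $A_n(V)\to A_{n-1}(V)$ has kernel of dimension exactly $\sum_i(\dim M^i(n))^2$; I would phrase the statement via this kernel, or simply as the difference $\dim A_n(V)-\dim A_{n-1}(V)$.) In particular, taking $i=0$ (where $M^0\cong V$, so $M^0(n)=V_n$), the denominator $(\dim V_n)^2$ is precisely the $i=0$ summand.

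Next I would divide:
\[
\frac{\dim A_n(V)/A_{n-1}(V)}{(\dim V_n)^2}=\sum_{i=0}^d\left(\frac{\dim M^i(n)}{\dim V_n}\right)^2.
\]
By hypothesis each ratio $\dim M^i(n)/\dim V_n$ converges to a finite limit as $n\to\infty$; call it $d_i$. By Proposition~\ref{equidef1} (applied with $M=M^i$), this limit equals $\lim_{q\to1^-}\ch_qM^i/\ch_qV=\qdim_V M^i$. Since the sum over $i$ is finite, the limit passes inside, giving $\sum_{i=0}^d d_i^2=\sum_{i=0}^d(\qdim_V M^i)^2=\glob(V)$, which is exactly the claimed identity.

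The only subtle point — and the step I would be most careful about — is the bookkeeping around "$A_n(V)/A_{n-1}(V)$." One must make sure the quotient in the statement is interpreted compatibly with the grading coming from Theorem~2.?(4): the relevant object is $\mathrm{gr}_n A(V)=\bigoplus_{m\le n}$-part minus $\bigoplus_{m\le n-1}$-part, i.e. the $m=n$ layer $\oplus_i \End M^i(n)$, whose dimension is $\sum_i(\dim M^i(n))^2$. Once this identification is pinned down, everything else is a one-line termwise limit, so there is no real analytic obstacle; the finiteness of $d$ is what makes interchanging limit and sum trivial, and the existence/finiteness of each $d_i$ is assumed. I would also remark that $\qdim_V M^0=\qdim_V V=1$, so the $i=0$ term contributes $1$ and the formula is consistent with the denominator being the $i=0$ summand.
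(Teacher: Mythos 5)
Your proposal is correct and follows essentially the same route as the paper: identify $A_n(V)/A_{n-1}(V)\cong\bigoplus_{i=0}^d\End M^i(n)$ via the structure theorem for $A_n(V)$, divide by $(\dim V_n)^2$ termwise, and invoke Proposition \ref{equidef1} to identify each limit $\lim_n \dim M^i(n)/\dim V_n$ with $\qdim_V M^i$. Your extra remarks on interpreting the quotient and on the $i=0$ term are just a more careful spelling-out of what the paper leaves implicit.
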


\pf It follows from \cite{DLM3} that $A_n(V)\cong \oplus_{i=0}^d\oplus_{m\leq n}\End M^i(m).$ Then
$$A_n(V)/A_{n-1}(V)\cong \bigoplus_{i=0}^d\End M^i(n).$$
 Using Proposition \ref{equidef1} and the definition
of global dimension gives the result immediately. \qed

\subsection{Examples}

We use the definition to compute the quantum dimensions of modules  for Heisenberg vertex operator algebras and Virasoro vertex operator algebras.

\begin{ex}\label{hei} {\rm Let $M(1)$ be the Heisenberg \voa constructed from the vector space $H$ of dimension $d$ and with a nondegenerate symmetric bilinear form. It is well known that every irreducible $M(1)$-module is of the form $M(1,\l),$ $\l \in H$ with $q$-character
\begin{equation*}
\ch_q M(1,\l)= \frac {q^{\frac{(\l,\l)}{2}-\frac{d}{24}}}{\prod _{n\geq 1}(1-q^n)^d}.
\end{equation*}
Using the alternating definition in Remark \ref{rem def}   we have
\begin{equation*}
\qdim_{M(1)} M(1,\l)
=\lim_{q\to 1^-}   \frac{ \ \ \frac{q^{\frac{(\l,\l)}{2}}}  {\ \prod_{n\geq 1 } (1-q^n)^d \ }\ \  }  {\frac{1}{\  \prod_{n\geq 1} (1-q^n)^d\ }}\\
= 1.
\end{equation*}}
\end{ex}

 \begin{ex}\label{dim L(1,h)} {\rm
 Let $c$ and $h$ be two complex numbers and let $L(c,h)$ be the lowest weight irreducible module for the Virasoro algebra with central charge $c$ and lowest weight $h.$ Then $L(c,0)$ has a natural \voa structure \cite{FZ}. We are interested in the vertex operator algebra $L(1,0)$ and its irreducible modules. We know from \cite{KR} that
$$\dim_qL(1,h)=\left\{\begin{array}{ll}
\frac{q^{\frac{n^2}{4}}-q^{\frac{(n+2)^2}{4}}}{\eta(q)} & {\rm if} \
h=\frac{n^2}{4},n\in\Z,\\
\frac{q^h}{\eta(q)}& {\rm otherwise,}
\end{array}\right.$$
 where
$$\eta(q)=q^{1/24}\prod_{n=1}^{\infty}(1-q^n).$$
Then if $h\ne \frac{n^2}{4}$ for any integer $n,$
\begin{equation*}
\begin{split}
\qdim_{L(1,0)}L(1,h)=\lim_{q\to 1^-}\frac{~\frac{q^h}{~\prod _{n\geq1}(1-q^n)~}~}{~\frac{1}{~\prod _{n>1}(1-q^n)~}~}
=\lim_{q\to 1^-}\frac{q^h}{1-q}
=\infty.
\end{split}
 \end{equation*}
Similarly for $h=\frac{m^2}{4}$ for some integer $m,$
\begin{equation*}
\begin{split}
\qdim_{L(1,0)}L(1, \frac{m^2}{4})=&\lim_{q\to 1^-}\frac{q^{\frac{m^2}{4}}-q^{\frac{(m+2)^2}{4}}}{1-q}\\
=& \lim_{q\to 1^-}\frac{q^{\frac{m^2}{4}}(1-q^{m+1})}{1-q}\\
=& \lim_{q\to 1^-}q^{\frac{m^2}{4}}(1+q+\cdots+q^m)\\
=& m+1.
\end{split}
\end{equation*}}
\end{ex}

\section{Quantum Dimensions and the Verlinde Formula}

Computing quantum dimensions directly by using the definition is not easy. However, the definition of quantum dimensions involves the $q$-characters. This motivates us to use the modular invariant property of the trace functions \cite {Z} to do the computation. It turns out that the Verlinde formula \cite{V}, \cite{H} plays an important role.

\subsection{Verlinde Formula}

The Verlinde conjecture \cite{V} in conformal field theory states that the action of the modular transformation $\tau\to -1/\tau$ on the space of characters of a rational conformal field theory diagonalizes the fusion rules.  In this section, we quote some results  from \cite{H} about the Verlinde conjecture of  rational \voas.

Let $V$ be a rational and $C_2$-cofinite simple VOA, and let $M^0, M^1,\cdots,\ M^d$ be as before.
We use $N_{i,j}^k$ to denote $\dim I_V \left(
\begin{array}{c}
\ \ M^k \ \\
M^i \ \  M^j
\end{array}
\right),$ where
$I_V \left (
\begin{array}{c}
\ \ M^k \ \\
M^i \ \  M^j
\end{array}
 \right)$
 is the space of intertwining operators of type
 $\left(
\begin{array}{c}
\ \ M^k \ \\
M^i \ \  M^j
\end{array}
 \right).$ $N_{i, j}^k$ are called the fusion rules.
As usual, we use $M^{i'}$ to denote $(M^i)',$ the contragredient module of $M^i.$

 The following theorem which plays an essential role in this section is proved in \cite{H}.
\begin{thm}\label{Verlinde} Let $V$ be a rational and $C_2$-cofinite simple \voa of CFT type and assume $V\cong V'.$ Let $S=(S_{i,j})_{i,j=0}^d$ be the $S$-matrix as defined in (\ref{S-tran}). Then

(1) $(S^{-1})_{i,j}=S_{i,j'}=S_{i',j},$ and $S_{i',j'}=S_{i,j}; $

(2) $S$ is symmetric and $S^2=(\delta_{i,j'});$

(3) $N_{i,j}^k=\sum_{s=0}^d\frac{S_{j,s}S_{i,s}S^{-1}_{s,k}}{S_{0,s}};$

(4) The $S$-matrix diagonalizes the fusion matrix $N(i)=(N_{i,j}^k)_{j,k=0}^d$ with diagonal entries $\frac{S_{i,s}}{S_{0,s}},$ for  $i, s=0,\cdots,d.$ More explicitly, $S^{-1}N(i)S=\diag(\frac{S_{i,s}}{S_{0,s}})_{s=0}^d.$ In particular, $S_{0,s}\neq 0$ for $s=0,\cdots,d.$
\end{thm}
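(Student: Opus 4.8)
The statement is Huang's theorem, and the plan is to derive all four parts from the vertex tensor category structure on the module category of $V$ together with the modular invariance of genus-one one-point functions. First I would invoke the Huang--Lepowsky--Zhang theory to equip $\mathcal{C}$, the category of $V$-modules, with the structure of a braided tensor category in which the fusion product $M^i\boxtimes M^j$ decomposes with multiplicities $N_{i,j}^k$; thus the fusion matrix $N(i)=(N_{i,j}^k)_{j,k}$ represents the endofunctor $M^i\boxtimes-$ in the basis of simple objects. Since $\boxtimes$ is associative and commutative up to natural isomorphism, the matrices $N(i)$ form a commuting family with nonnegative integer entries, hence are simultaneously triangularizable; the real content of (4) is to identify their common eigenvectors and eigenvalues.

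The bridge to the $S$-matrix comes from applying Theorem \ref{minvariance}, not to ordinary trace functions, but to $q$-traces of a single intertwining operator, i.e. to genus-one one-point functions. I would use Huang's result that these functions span an $SL(2,\Z)$-invariant space on which $\tau\mapsto-1/\tau$ is again governed by the matrix $S=(S_{i,j})$ of (\ref{S-tran}). Combining the $S$-transformation of a one-point function with the associativity (fusing) and commutativity (braiding) isomorphisms of intertwining operators yields the Moore--Seiberg-type identities tying $S$ to the fusing and braiding matrices; from these one extracts $N(i)\,S = S\,\diag\!\left(\frac{S_{i,s}}{S_{0,s}}\right)_{s}$, so that the $s$-th column of $S$ is a common eigenvector of all the $N(i)$ with eigenvalue $\frac{S_{i,s}}{S_{0,s}}$ --- this is (4) --- and (3) follows by conjugating $N(i)$ by $S$ and reading off the $(j,k)$-entry, once $S$ is invertible and $S_{0,s}\neq 0$.

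Parts (1), (2), and the non-vanishing $S_{0,s}\neq 0$ are exactly where the hypotheses (rational, $C_2$-cofinite, CFT type, $V\cong V'$) are indispensable, and proving them is the step I expect to be the main obstacle: one must show that $\mathcal{C}$ is \emph{rigid}, constructing coevaluation and evaluation morphisms that exhibit $M^{i'}$ as the dual of $M^i$ and verifying the zig-zag identities, which reduces to the non-degeneracy of a canonical pairing $M^i\boxtimes M^{i'}\to V$. Huang establishes this non-degeneracy from the modular transformation formula together with a limiting and positivity argument involving pseudo-trace functions. Rigidity then yields $(S^{-1})_{i,j}=S_{i,j'}=S_{i',j}$ and $S^2=(\delta_{i,j'})$, hence (1) and (2), and it also identifies $S_{0,s}/S_{0,0}$ with the (positive, nonzero) categorical dimension of $M^s$, giving $S_{0,s}\neq 0$ and completing (3). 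Since each of these ingredients is carried out in full in \cite{H}, in the end I would simply cite that work, the outline above being the logical skeleton.
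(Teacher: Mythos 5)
The paper offers no proof of this theorem at all: it is quoted as a result established in \cite{H}, so your ultimate step of simply citing Huang is exactly what the paper does, and your sketch of the intermediate ingredients (modular invariance of genus-one one-point functions, the Moore--Seiberg-type identities from associativity and commutativity of intertwining operators, diagonalization of the fusion matrices by $S$, and duality/rigidity giving (1), (2) and $S_{0,s}\neq 0$) follows Huang's published strategy faithfully. The only quibble, immaterial since the citation carries the full proof, is your attribution of the nondegeneracy step to ``pseudo-trace functions and positivity'': Huang's argument for rigidity and for $S_{0,s}\neq 0$ rests on the Moore--Seiberg relations and the Verlinde formula itself (in particular $S_{0,0}\neq 0$), not on pseudo-traces, which belong to the non-rational $C_2$-cofinite setting.
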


\subsection{Properties of Quantum Dimensions}

We prove that the quantum dimensions of modules exist and are related to the $S$-matrix under certain assumptions in this subsection. Using the Verlinde Formula we obtain an expression of the global dimension.
We also show that quantum dimensions are multiplicative under tensor product, and give a criterion for a simple module to be a simple current.

\begin{lem}\label{lem qdim}
Let $V$ be a simple, rational and $C_2$-cofinite \voa of CFT type. Let $M^0,\ M^1, \cdots ,\ M^d$ be as before with the corresponding conformal weights $\l_i>0$ for $0<i\leq d$. Then $0<\qdim_{V}M^i<\infty$ for any $0\leq i\leq d.$
\end{lem}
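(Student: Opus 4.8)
The plan is to use the modular $S$-transformation \eqref{S-tran} applied to the formal characters $Z_{M^i}(\tau)$ (the case $v=\1$), together with the assumption $\lambda_i>0$ for $i\neq 0$, to identify the leading asymptotics of $Z_V(iy)$ and $Z_{M^i}(iy)$ as $y\to 0^+$. Writing $\tau=iy$, the transformation $\tau\mapsto-1/\tau$ sends $iy$ to $i/y$, and as $y\to 0^+$ we have $i/y\to i\infty$, i.e.\ $\tilde q:=e^{-2\pi/y}\to 0$. So $Z_{M^j}(-1/\tau)=\tilde q^{\,\lambda_j-c/24}(\dim M^j(0)+O(\tilde q))$, and the dominant term among all $j$ is the one with the smallest $\lambda_j$. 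Under the hypothesis $\lambda_i>0$ for $i>0$ and $\lambda_0=0$ (since $V$ is of CFT type, $M^0=V$ has conformal weight $0$), the unique smallest conformal weight is $\lambda_0=0$, attained only by $V$ itself.

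First I would write, for each $i$,
\begin{equation*}
Z_{M^i}(iy)=Z_{M^i}(-1/\tau')\big|_{\tau'=i/y}=\sum_{j=0}^d S_{ij}\,Z_{M^j}(i/y),
\end{equation*}
using \eqref{S-tran} with $v=\1$ (so $\wt[\1]=0$ and the automorphy factor is trivial). Then as $y\to 0^+$,
\begin{equation*}
Z_{M^j}(i/y)=e^{-2\pi(\lambda_j-c/24)/y}\big(\dim M^j(0)+O(e^{-2\pi/y})\big),
\end{equation*}
so the sum is dominated by the $j$ with minimal $\lambda_j$, namely $j=0$:
\begin{equation*}
Z_{M^i}(iy)\sim S_{i0}\,e^{2\pi (c/24)/y}\qquad (y\to 0^+),
\end{equation*}
since $\dim V_0=\dim M^0(0)=1$ for a VOA of CFT type. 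Therefore
\begin{equation*}
\qdim_V M^i=\lim_{y\to0^+}\frac{Z_{M^i}(iy)}{Z_V(iy)}=\frac{S_{i0}}{S_{00}}.
\end{equation*}
It remains to check $0<S_{i0}/S_{00}<\infty$: finiteness is clear since the sum is finite; positivity (in particular $S_{00}\neq0$) follows from Theorem~\ref{Verlinde}(4), which asserts $S_{0,s}\neq0$ for all $s$, combined with the fact that $S_{i0}$ is, up to the positive factor $S_{00}$, the Perron--Frobenius eigenvalue ratio of the nonnegative fusion matrix $N(0)=(N_{0,j}^k)$— actually one sees directly from $N(0)=\mathrm{id}$ that one must argue a little more; better to note that for the fusion matrix $N(i')$ one has $N(i')_{0,k}=N_{i',0}^k=\delta_{i,k}$, and the eigenvalue $S_{i',s}/S_{0,s}$ machinery of Theorem~\ref{Verlinde} forces each $S_{i0}=S_{0i}$ to be nonzero and, by Perron--Frobenius applied to a suitable nonnegative matrix $\sum_i N(i)$, of the same sign as $S_{00}$, hence $S_{i0}/S_{00}>0$.

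The main obstacle is twofold. The analytic point — interchanging the limit $y\to0^+$ with the finite sum and extracting leading asymptotics — is routine once one knows each $Z_{M^j}(\tau)$ is a convergent $q$-series with a genuine leading term $\tilde q^{\lambda_j-c/24}$, which is guaranteed by $C_2$-cofiniteness (cited in Section~2) together with the fact that $M^j(0)\neq0$. The genuinely delicate step is the positivity $S_{i0}/S_{00}>0$, equivalently $S_{00}>0$ and $S_{i0}/S_{00}>0$: this is where one must invoke Perron--Frobenius. One applies it to the matrix $\sum_{i=0}^d N(i)$ (or to $N(i)+N(i)^{t}$-type arguments), which is a nonnegative matrix that is irreducible because every irreducible module appears in some fusion product, and whose simultaneous eigenvector with eigenvalue equal to the sum of $S_{i,s}/S_{0,s}$ is $(S_{0,s})_s$; the Perron--Frobenius eigenvector is strictly positive, and one identifies it (up to scalar) with the $s=0$ column $(S_{i,0})_i$ after using $S^2=(\delta_{i,j'})$ and $S_{0,0}=S_{0',0}$, pinning down the sign. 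Given the cited Verlinde results this is short, but it is the crux; everything else is bookkeeping with the $S$-transformation and elementary estimates of the type already carried out in Proposition~\ref{equidef1}.
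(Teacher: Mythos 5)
Your main computation is exactly the paper's: apply the $S$-transformation with $v=\1$, use $\lambda_0=0$ and $\lambda_j>0$ for $j\neq 0$ to see that only the $j=0$ term survives as $q\to 0^+$, and conclude $\qdim_V M^i=S_{i,0}/S_{0,0}$, with $S_{0,0}\neq 0$ supplied by Theorem \ref{Verlinde}. Up to that point the proposal is sound.

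The gap is the positivity step, which you yourself flag as the crux but do not actually prove, and the route you sketch cannot work as stated. By Theorem \ref{Verlinde}(4) \emph{every} column $(S_{j,s})_j$ of $S$ is a common eigenvector of the fusion matrices, with eigenvalues $S_{i,s}/S_{0,s}$; Perron--Frobenius only tells you that the eigenvector with entries of one sign belongs to the spectral radius. Knowing that the $s=0$ column is that distinguished eigenvector is precisely the statement $S_{i,0}/S_{0,0}>0$ you are trying to establish, so ``pinning down the sign'' via $S^2=(\delta_{i,j'})$ and Perron--Frobenius applied to $\sum_i N(i)$ is circular. Moreover, the sign claim is genuinely false at the level of fusion-ring combinatorics alone: for the non-unitary minimal model $L(c_{2,5},0)$ (see the examples in Section 4.3 and Remark \ref{nonunitary}, where the weight hypothesis of the lemma fails) all of this structure is present --- nonnegative fusion matrices, symmetric $S$ with $S^2=(\delta_{i,j'})$, columns of $S$ as eigenvectors --- yet the vacuum-column ratio $S_{i,0}/S_{0,0}$ is negative ($-2\cos(\pi/5)+1$-type value, concretely $-\sin(4\pi/5)/\sin(2\pi/5)<0$). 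So positivity must come from the analytic side, where the hypothesis $\lambda_i>0$ is used. The paper's fix is one line and is already available to you: $Z_{M^i}(iy)$ and $Z_V(iy)$ are convergent series with nonnegative coefficients evaluated at $q=e^{-2\pi y}\in(0,1)$, hence positive, so the limit, once shown to exist, is nonnegative (Remark \ref{rem positive dim}); since it equals $S_{i,0}/S_{0,0}$ and $S_{i,0}=S_{0,i}\neq 0$ by Theorem \ref{Verlinde}(2) and (4), it is strictly positive and finite. With that substitution for your Perron--Frobenius paragraph, the proof is complete and coincides with the paper's.
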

\begin{proof}
Since $V$ is rational and $C_2$-cofinite, we can use the modular transformation rule given in (\ref{S-tran}). By definition, we have
\begin{equation}\label{qdim S}
\begin{split}
\qdim_{V} M^i=&\lim_{y\to 0}\frac{Z_{M^i}(iy)}{Z_V(iy)}\\
=&\lim_{\tau \to i\infty}\frac{Z_{M^i}(-\frac{1}{\tau})}{Z_{V}(-\frac{1}{\tau})}\\
=&\lim_{\tau \to i\infty}\frac{\sum_{j=0}^{d} S_{i,j}Z_{M^j}(\tau)}{\sum_{j=0}^{d}S_{0,j}Z_{M^j}(\tau)}\\
=&\lim_{q\to 0^+}\frac{\sum_{j=0}^{d} S_{i,j}\ch_qM^j}{\sum_{j=0}^{d}S_{0,j}\ch_qM^j}\\
=&\frac{S_{i,0}}{S_{0,0}}.
\end{split}
\end{equation}
The last equality is true because the conformal weight $\lambda_i>0$ except $\lambda_0,$ which implies $\lim_{q\to 0^+} \ch_qM^j=0$ for $0<j\leq d.$
By Theorem \ref{Verlinde} we know that  $S_{i,0}\neq 0$ for $0\leq i\leq d.$
So $\qdim_VM^i$ exists for all $0\leq i\leq d.$ By Remark \ref{rem positive dim} we conclude $0<\qdim_{V}M^i<\infty$.
\end{proof}

\begin{rem}
The computation given in (\ref{qdim S}) indicates the quantum dimension of a $V$-module, in some sense, only depends on the $S$-matrix. From Remark \ref{trans} we know that the $S$-matrix does not depend on the element $v$ we choose. Therefore we can actually define $\qdim_V M$ in the following way:
$$\qdim_V M=\lim_{y\to 0}\frac{Z_M(v, iy)}{Z_V(v,iy)}$$
for any homogeneous $v\in V$ with $o(v)|_{V_0}\neq 0.$
\end{rem}

\begin{rem}\label{nonunitary} Lemma \ref{lem qdim} can be generalized as follows. Assume $V$ is a simple, rational and $C_2$-cofinite VOA with $V\cong V'.$ Let $M^i$ be as before.  Also assume $\l_k=\lambda_{\min}=\min_i\{\l_i\}$ and $\l_j> \l_k$ $\forall j\neq k.$ (It is not clear if the assumption that there is a unique $k$ such that $\lambda_k=\lambda_{\min}$ is always satisfied for rational VOAs.)    Then $\qdim_V M^i=\frac{S_{i,k}}{S_{0,k}}.$ The same consideration also applies to several other results including Propositions \ref{global}, \ref{simple current}, and Theorem \ref{possible value}.  below with suitable modifications.
\end{rem}

By using Verlinde Formula, we get the following result about global dimensions.
\begin{prop}\label{global} Let V be as in Lemma \ref{lem qdim}, the global dimension of $V$ is given by $\glob(V)=\frac{1}{S_{0,0}^2}.$
\end{prop}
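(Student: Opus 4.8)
The plan is to combine the computation of individual quantum dimensions from Lemma \ref{lem qdim} with the properties of the $S$-matrix collected in Theorem \ref{Verlinde}. From Lemma \ref{lem qdim} we already know that $\qdim_V M^i = S_{i,0}/S_{0,0}$ for every $0\leq i\leq d$, so the definition of global dimension gives
\begin{equation*}
\glob(V) = \sum_{i=0}^d (\qdim_V M^i)^2 = \sum_{i=0}^d \frac{S_{i,0}^2}{S_{0,0}^2} = \frac{1}{S_{0,0}^2}\sum_{i=0}^d S_{i,0}^2.
\end{equation*}
Thus the entire task reduces to showing that $\sum_{i=0}^d S_{i,0}^2 = 1$.

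To see this, I would use the properties listed in Theorem \ref{Verlinde}. By part (2), $S$ is symmetric and $S^2 = (\delta_{i,j'})$. Looking at the $(0,0)$-entry of $S^2$ and using symmetry, $(S^2)_{0,0} = \sum_{i=0}^d S_{0,i}S_{i,0} = \sum_{i=0}^d S_{i,0}^2$. On the other hand, $(S^2)_{0,0} = \delta_{0,0'}$. Since $M^0 \cong V$ is self-contragredient (indeed $V\cong V'$ is assumed), we have $0' = 0$, hence $\delta_{0,0'} = 1$. Therefore $\sum_{i=0}^d S_{i,0}^2 = 1$, which yields $\glob(V) = 1/S_{0,0}^2$ as claimed. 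Note that $S_{0,0}\neq 0$ by Theorem \ref{Verlinde}(4), so the expression makes sense.

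The argument is essentially a one-line manipulation once Lemma \ref{lem qdim} and Theorem \ref{Verlinde} are in hand, so there is no real obstacle; the only point requiring a moment's care is the identification $0' = 0$, i.e. that the contragredient of the adjoint module $V$ is $V$ itself, which is exactly the standing hypothesis $V\cong V'$ of Lemma \ref{lem qdim}. I would state that explicitly to justify $\delta_{0,0'}=1$. Alternatively, one could invoke part (1) of Theorem \ref{Verlinde}, which gives $(S^{-1})_{0,0} = S_{0,0'}$, and combine it with $S^{-1} = S$ (from $S^2 = (\delta_{i,j'})$ together with $0'=0$) — but the direct computation of $(S^2)_{0,0}$ via symmetry is cleaner and is the route I would present.
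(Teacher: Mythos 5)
Your proof is correct and follows essentially the same route as the paper: apply Lemma \ref{lem qdim} to write $\qdim_V M^i = S_{i,0}/S_{0,0}$ and then use Theorem \ref{Verlinde} (symmetry and $S^2=(\delta_{i,j'})$, with $0'=0$ since $V\cong V'$) to get $\sum_{i=0}^d S_{i,0}^2=1$. You merely spell out the unitarity computation that the paper cites Theorem \ref{Verlinde} (1) and (2) for, which is a reasonable amount of added detail.
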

\begin{proof}
By Lemma \ref{lem qdim},
$$\glob(V)=\sum_{i=0}^d\big{(}\frac{S_{i,0}}{S_{0,0}}\big{)}^2
=\sum_{i=0}^d \frac{S_{i,0}^2}{S_{0,0}^2}=\frac{\sum_{i=0}^d S_{i,0}^2}{S_{0,0}^2}.$$
By Theorem \ref{Verlinde} (1) and (2), $\sum_{i=0}^d S_{i,0}^2=1$ and the result follows.
\end{proof}

Next, we turn our  discussion to the quantum dimension of tensor product of two modules.
\begin{de} Let $V$ be a vertex operator algebra, and $M^1$, $M^2$ be two  $V$-modules.
A module $(W, I)$, where
$I\in I_V\left (
\begin{array}{c}
\ \ W \ \\
M^1 \ \  M^2
\end{array}
 \right ),$ is called a tensor product of $M^1$ and $M^2$ if for any $V$-module $M$ and $\mathcal{Y}\in
I_V\left (
\begin{array}{c}
\ \ M \ \\
M^1 \ \  M^2
\end{array}
 \right ) $, there is a unique $V$-module homomorphism $f: W\rightarrow M$, such that $\mathcal{Y}=f\circ I$. As usual, we denote $(W,I)$ by $M^1 \boxtimes M^2$.
\end{de}

\begin{rem}\label{rem tensor}
It is well known that for a rational \voa $V,$ let $M^0,\cdots, M^d$ be all irreducible $V$-modules,  then $M^i \boxtimes M^j$ exists and $$M^i \boxtimes M^j= \sum_{k=0}^d N_{i,j}^k M^k,$$
where $N_{i,j}^k$ are the fusion rules.

Moreover, if $V$ is also $C_2$-cofinite, then the tensor product is commutative and associative and $N_{i,j}^k=N_{i,j'}^{k'}$ (see \cite{HL1}-\cite{HL4}, \cite{Li3}).

\end{rem}

\begin{rem}\label{V*M}
It is easy to show that $V\boxtimes M=M$ for any $V$-module $M$.
\end{rem}

\begin{prop} \label{prop tensor}
Let V and $\{M^0,M^1,\ldots, M^d\}$ be as in Lemma \ref{lem qdim}, and also assume $V\cong V',$ then $$\qdim_V (M^i\boxtimes M^j)= \qdim_V M^i\cdot \qdim_V M^j.$$
\end{prop}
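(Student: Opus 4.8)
The plan is to combine the $S$-matrix formula for quantum dimensions from Lemma \ref{lem qdim} with the Verlinde formula (Theorem \ref{Verlinde}) and the decomposition of the tensor product into irreducibles (Remark \ref{rem tensor}). By Lemma \ref{lem qdim}, for each irreducible $M^i$ we have $\qdim_V M^i = S_{i,0}/S_{0,0}$, and these numbers are finite and positive. Since $M^i\boxtimes M^j = \sum_{k=0}^d N_{i,j}^k M^k$ and quantum dimension is additive over direct sums (immediate from the definition, since the $q$-character of a direct sum is the sum of the $q$-characters), we get
\begin{equation*}
\qdim_V(M^i\boxtimes M^j) = \sum_{k=0}^d N_{i,j}^k \,\qdim_V M^k = \sum_{k=0}^d N_{i,j}^k \frac{S_{k,0}}{S_{0,0}}.
\end{equation*}

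The heart of the argument is then to show this equals $\frac{S_{i,0}}{S_{0,0}}\cdot\frac{S_{j,0}}{S_{0,0}}$. Here I would invoke Theorem \ref{Verlinde}(4): the $S$-matrix diagonalizes the fusion matrix $N(i)=(N_{i,j}^k)_{j,k}$ with eigenvalues $S_{i,s}/S_{0,s}$. Equivalently, the column vector $(S_{k,s})_{k}$ is, up to the normalization, an eigenvector; reading off the appropriate entry gives the identity
\begin{equation*}
\sum_{k=0}^d N_{i,j}^k S_{k,s} = \frac{S_{i,s}}{S_{0,s}} S_{j,s}
\end{equation*}
for every $s$ (this is the standard consequence of $N(i)S = S\,\diag(S_{i,s}/S_{0,s})$, comparing the $(j,s)$ entries, together with the symmetry of $S$ from Theorem \ref{Verlinde}(2)). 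Specializing to $s=0$ and dividing by $S_{0,0}^2$ yields
\begin{equation*}
\sum_{k=0}^d N_{i,j}^k \frac{S_{k,0}}{S_{0,0}} = \frac{S_{i,0}}{S_{0,0}}\cdot\frac{S_{j,0}}{S_{0,0}},
\end{equation*}
which is exactly $\qdim_V M^i\cdot\qdim_V M^j$, completing the proof.

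I do not expect a serious obstacle here: all the needed machinery — existence and finiteness of the quantum dimensions, the $S$-matrix formula, the Verlinde formula, and commutativity/associativity of the tensor product — is already available in the excerpt. The one point requiring a little care is extracting the scalar identity $\sum_k N_{i,j}^k S_{k,s} = (S_{i,s}/S_{0,s})S_{j,s}$ cleanly from the matrix statement $S^{-1}N(i)S = \diag(S_{i,s}/S_{0,s})$; one rewrites it as $N(i)S = S\,\diag(S_{i,s}/S_{0,s})$ and compares $(j,s)$-entries, using that $S$ is symmetric so that the $(k,s)$-entry of $S$ is $S_{k,s}$. Alternatively, one can avoid even this by substituting the explicit formula $N_{i,j}^k = \sum_{s} \frac{S_{i,s}S_{j,s}S^{-1}_{s,k}}{S_{0,s}}$ from Theorem \ref{Verlinde}(3) directly into $\sum_k N_{i,j}^k S_{k,0}/S_{0,0}$, using $\sum_k S^{-1}_{s,k}S_{k,0} = \delta_{s,0}$ to collapse the sum; this is perhaps the most transparent route and is the one I would write up. As in Remark \ref{nonunitary}, the same proof goes through if instead of $\lambda_{\min}=0$ one uses the unique index $k$ realizing $\lambda_{\min}$, replacing $S_{\bullet,0}$ by $S_{\bullet,k}$ throughout.
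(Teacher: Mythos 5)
Your proposal is correct and is essentially the paper's own argument: the paper reduces to the identity $\sum_{k=0}^d N_{i,j}^k \frac{S_{k,0}}{S_{0,0}}= \frac{S_{i,0}}{S_{0,0}}\frac{S_{j,0}}{S_{0,0}}$ and proves it precisely by the route you say you would write up, namely substituting the explicit formula of Theorem \ref{Verlinde}(3) and collapsing the sum with $\sum_k S^{-1}_{s,k}S_{k,0}=\delta_{s,0}$ (your eigenvector derivation via Theorem \ref{Verlinde}(4) is an equivalent reformulation). Only a trivial bookkeeping slip: after setting $s=0$ in $\sum_k N_{i,j}^k S_{k,0}=\frac{S_{i,0}}{S_{0,0}}S_{j,0}$ one divides by $S_{0,0}$, not $S_{0,0}^2$, to reach the stated identity.
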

\begin{proof}
By Lemma \ref{lem qdim} and  Remark \ref{rem tensor}, it suffices to show $$\sum_{k=0}^d N_{i,j}^k \frac{S_{k,0}}{S_{0,0}}= \frac{S_{i,0}}{S_{0,0}}\frac{S_{j,0}}{S_{0,0}}.$$
Using Theorem \ref{Verlinde} gives
\begin{equation*}
\begin{split}
\sum_{k=0}^d N_{i,j}^k \frac{S_{k,0}}{S_{0,0}}=& \sum_{k,s=0}^d \frac{S_{i,s}S_{j,s}S^{-1}_{s,k}}{S_{0,s}}\frac{S_{k,0}}{S_{0,0}} \\
=& \frac{1}{S_{0,0}}\sum_{s=0}^d \bigg (\frac{S_{i,s}S_{j,s}}{S_{0,s}}\sum_{k=0}^d \big (S^{-1}_{s,k}S_{k,0}\big )\bigg ) \\
=&\frac{1}{S_{0,0}}\sum_{s=0}^d \big (\frac{S_{i,s}S_{j,s}}{S_{0,s}} \delta_{s,0}\big ) \ \ \ \ \ \ \ \ \ \ \  \\
=&\frac{1}{S_{0,0}}\frac{S_{i,0}S_{j,0}}{S_{0,0}}\ \ \ \ \ \ \ \ \ \ \ \ \ \ \ \ \ \ \ \ \ \\
=&\frac{S_{i,0}}{S_{0,0}}\frac{S_{j,0}}{S_{0,0}}.
\end{split}
\end{equation*}
The proof is complete.
\end{proof}

\begin{rem}
Under the assumptions of Remark \ref{nonunitary},  one can easily prove that quantum dimensions are also multiplicative under tensor product using the Verlinde formula.
\end{rem}

We now turn our attention to the quantum dimensions of simple currents.
\begin{de} Let V be a simple vertex operator algebra, a simple $V$-module $M$ is called a simple current if for any irreducible $V$-module $W$, $M\boxtimes W$ exists and is also a simple $V$-module.
\end{de}
It is clear from the definition that the simple current is an analogue of $1$-dimensional module for groups. Our goal is to establish that $M$ is a simple current if and only if $\qdim_VM=1.$

\begin{rem} By Remark \ref{V*M}, if $V$ is a simple vertex operator algebra, then $V$ itself is a simple current.
\end{rem}

\begin{lem}\label{tensor nonzero}
Let $V$ be a rational and $C_2$-cofinite simple vertex operator algebra, and $M$, $N$ be two admissible $V$-modules. Then $M\boxtimes N \neq 0.$
\end{lem}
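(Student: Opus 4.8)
The statement to prove is that for a rational, $C_2$-cofinite simple vertex operator algebra $V$, and two nonzero admissible $V$-modules $M$ and $N$, the tensor product $M\boxtimes N$ is nonzero. The plan is to reduce immediately to the case of irreducible modules, then use the fusion-rule description of $\boxtimes$ from Remark~\ref{rem tensor} together with the nondegeneracy of the $S$-matrix guaranteed by the Verlinde formula (Theorem~\ref{Verlinde}).

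First I would observe that since $V$ is rational, every admissible module is a direct sum of irreducible modules, and $\boxtimes$ is additive in each variable; so it suffices to show $M^i\boxtimes M^j\neq 0$ for any two irreducibles $M^i, M^j$ in the list $M^0,\dots,M^d$. By Remark~\ref{rem tensor}, $M^i\boxtimes M^j=\sum_{k=0}^d N_{i,j}^k M^k$, so this product is zero if and only if $N_{i,j}^k=0$ for all $k$. Thus the goal becomes: for each pair $(i,j)$ there exists $k$ with $N_{i,j}^k\neq 0$. The natural candidate is $k=j'$, the contragredient index of $j$: one expects $N_{i,j}^{j'}$ to count something nonzero related to $M^i$ appearing with multiplicity $\dim M^i(0)$ or similar — but rather than argue representation-theoretically, I would argue via the Verlinde formula.

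Suppose for contradiction that $N_{i,j}^k=0$ for all $k$. By Theorem~\ref{Verlinde}(3),
\begin{equation*}
0=\sum_{k=0}^d N_{i,j}^k S^{-1}_{k,\ell} = \sum_{k,s=0}^d \frac{S_{i,s}S_{j,s}S^{-1}_{s,k}}{S_{0,s}}S^{-1}_{k,\ell} = \sum_{s=0}^d \frac{S_{i,s}S_{j,s}}{S_{0,s}}(S^{-2})_{s,\ell}
\end{equation*}
for every $\ell$. Using $S^2=(\delta_{s,\ell'})$ from Theorem~\ref{Verlinde}(2), this forces $\dfrac{S_{i,\ell'}S_{j,\ell'}}{S_{0,\ell'}}=0$ for every $\ell$, hence $S_{i,s}S_{j,s}=0$ for all $s$. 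But $S$ is invertible, so the row $(S_{i,s})_s$ is not identically zero and the row $(S_{j,s})_s$ is not identically zero; pick $s_1$ with $S_{i,s_1}\neq 0$ and $s_2$ with $S_{j,s_2}\neq 0$. This alone does not yet give a contradiction unless $s_1=s_2$, so I would sharpen the argument: actually, from $S_{i,s}S_{j,s}=0$ for all $s$, consider instead the quantum-dimension computation. Apply Proposition~\ref{prop tensor} (valid since the hypotheses here match those of Lemma~\ref{lem qdim} once we note $V\cong V'$ follows from simplicity and $C_2$-cofiniteness, or invoke Remark~\ref{nonunitary}): $\qdim_V(M^i\boxtimes M^j)=\qdim_V M^i\cdot\qdim_V M^j$. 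By Lemma~\ref{lem qdim}, both factors on the right are strictly positive, so the left side is strictly positive; but if $M^i\boxtimes M^j=0$ its quantum dimension would be $0$, a contradiction. This settles the claim.

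The main obstacle is a hypothesis-matching issue rather than a deep one: Lemma~\ref{lem qdim} and Proposition~\ref{prop tensor} were stated under the extra assumptions that $V$ is of CFT type with $V\cong V'$ and that the conformal weights $\lambda_i>0$ for $i>0$, whereas Lemma~\ref{tensor nonzero} as stated only assumes rationality, $C_2$-cofiniteness, and simplicity. I would handle this either by noting these are the standing assumptions in force throughout this section (so the reader should read them in), or — to be self-contained — by giving the purely $S$-matrix argument above but completing it correctly: from $S_{i,s}S_{j,s}=0$ for all $s$, the key point is that since $S$ is invertible and $S^2=(\delta_{s,t'})$, one has $\sum_s S_{0,s}S_{i,s}=(S^2)_{0,i}=\delta_{0,i'}=\delta_{i,0}$; more usefully, for $i=j$ this already fails since $\sum_s S_{i,s}^2 = (S^2)_{i,i}=\delta_{i,i'}$ which is $1$ when $M^i$ is self-dual, contradicting $S_{i,s}^2=0$ for all $s$. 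For the general pair $(i,j)$ one instead uses that $M^i\boxtimes M^{i'}\supseteq V=M^0$ with positive multiplicity $N_{i,i'}^0=\dim I_V\binom{M^0}{M^i\,M^{i'}}\geq 1$, which is a standard fact about contragredients (nondegeneracy of the canonical pairing gives a nonzero intertwining operator of type $\binom{V}{M^i\,(M^i)'}$); then general nonvanishing of $M^i\boxtimes M^j$ follows because $M^j$ appears as a summand of $M^{i'}\boxtimes(M^i\boxtimes M^j)$ by associativity and the fact that $M^{i'}\boxtimes M^i\ni V$, so if $M^i\boxtimes M^j=0$ then $M^j=0$, contradiction. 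I would present this associativity-plus-contragredient argument as the clean proof, with the Verlinde/quantum-dimension remark as an alternative.
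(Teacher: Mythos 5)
Your final ``clean proof'' is correct and is essentially the paper's own argument: the paper likewise uses associativity of $\boxtimes$ together with $N\boxtimes N'\supseteq V$ and $V\boxtimes M=M$, computing $(M\boxtimes N)\boxtimes N'=M\boxtimes(N\boxtimes N')\supseteq M\boxtimes V=M\neq 0$ (without even reducing to irreducibles). The preceding Verlinde/quantum-dimension detour is unnecessary and, as you note yourself, imports hypotheses (CFT type, $V\cong V'$, positive conformal weights) not assumed in the lemma, so the associativity-plus-contragredient argument is the right one to keep.
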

\begin{proof}
As $V$ is rational and $C_2$-cofinite, by Remark \ref{rem tensor}, we know the tensor product of any two $V$-modules exists.
Thus we can consider $$\big (M\boxtimes N\big )\boxtimes N'=M\boxtimes \big (N\boxtimes N'\big )\supseteq M \boxtimes V= M\neq 0,$$
which implies $M\boxtimes N\neq 0.$
\end{proof}

\begin{lem}\label{>1}
Let $V$ be a rational and $C_2$-cofinite simple \voa of $CFT$ type with $V\cong V'$, and let $M^0,\ M^1,\ \cdots ,\ M^d$ be as before with the corresponding conformal weights $\l_i>0,$ $0<i\leq d$. For  any irreducible $V$-module $M$, $$\qdim_V M\geq 1.$$
\end{lem}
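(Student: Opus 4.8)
The plan is to show that the quantum dimension of every irreducible module is at least $1$ by exploiting the multiplicativity of quantum dimensions (Proposition \ref{prop tensor}), the fact that tensor products of nonzero modules are nonzero (Lemma \ref{tensor nonzero}), and the finiteness of the set of irreducibles. Set $q_i = \qdim_V M^i = S_{i,0}/S_{0,0}$, which by Lemma \ref{lem qdim} satisfies $0 < q_i < \infty$ for every $i$. Fix an irreducible module $M = M^i$ and consider the powers $M^{\boxtimes n}$, i.e. the iterated tensor product $M \boxtimes M \boxtimes \cdots \boxtimes M$ ($n$ factors). By Lemma \ref{tensor nonzero} and induction, $M^{\boxtimes n} \neq 0$, so it decomposes into irreducibles $M^{\boxtimes n} = \bigoplus_k c_{n,k} M^k$ with nonnegative integers $c_{n,k}$, not all zero.

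First I would record the key numerical consequence: applying $\qdim_V$ and using Proposition \ref{prop tensor} repeatedly gives
\begin{equation*}
q_i^{\,n} = \qdim_V\!\big(M^{\boxtimes n}\big) = \sum_{k=0}^d c_{n,k}\, q_k .
\end{equation*}
Let $q_{\min} = \min_{0\le k\le d} q_k > 0$. Since the $c_{n,k}$ are nonnegative integers with $\sum_k c_{n,k} \ge 1$, the right-hand side is bounded below by $q_{\min}$. Hence $q_i^{\,n} \ge q_{\min}$ for every $n \ge 1$. If we had $q_i < 1$, then $q_i^{\,n} \to 0$ as $n \to \infty$, contradicting $q_i^{\,n} \ge q_{\min} > 0$. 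Therefore $q_i \ge 1$, i.e. $\qdim_V M \ge 1$.

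The only slightly delicate point is making sure the iterated tensor product is well-defined and that $\qdim_V$ is genuinely multiplicative across an $n$-fold product; both follow from Remark \ref{rem tensor} (the tensor product of $V$-modules is commutative and associative for rational $C_2$-cofinite $V$) together with Proposition \ref{prop tensor} applied inductively, and the nonvanishing $M^{\boxtimes n}\ne 0$ follows by induction from Lemma \ref{tensor nonzero}. There is no real obstacle here beyond assembling these ingredients; the argument is essentially the standard "a positive real number whose powers stay bounded away from zero must be $\ge 1$" trick, adapted to the fusion ring. (One could alternatively phrase it via the Perron–Frobenius eigenvalue of the fusion matrix $N(i)$, but the elementary argument above is cleaner.)
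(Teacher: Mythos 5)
Your proof is correct. You use exactly the same toolbox as the paper --- positivity of quantum dimensions from Lemma \ref{lem qdim}, multiplicativity from Proposition \ref{prop tensor}, nonvanishing of fusion products from Lemma \ref{tensor nonzero}, and the positive minimum $q_{\min}$ coming from finiteness of the set of irreducibles --- but you assemble them into an asymptotic argument: $q_i^{\,n}=\qdim_V(M^{\boxtimes n})\geq q_{\min}>0$ for all $n$, which is incompatible with $q_i<1$. The paper instead makes a single one-step use of minimality: letting $M^i$ have minimal quantum dimension, it notes $\qdim_V M^i\cdot\qdim_V M=\qdim_V(M^i\boxtimes M)\geq \qdim_V M^i$, since the nonzero module $M^i\boxtimes M$ decomposes into irreducibles each of quantum dimension at least the minimum, and then divides by $\qdim_V M^i>0$. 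The paper's version is shorter and only ever multiplies two irreducibles, so it invokes Proposition \ref{prop tensor} literally as stated; your iterated version additionally needs multiplicativity and additivity of $\qdim_V$ over the (reducible) modules $M^{\boxtimes(n-1)}$, which does follow from Remark \ref{rem tensor} together with additivity of quantum dimension on direct sums (a fact the paper itself uses implicitly, e.g.\ in the proof of Proposition \ref{simple current}), but is worth stating explicitly, as you do. What your route buys is independence from singling out the minimal module as the tensor factor; what the paper's route buys is brevity and a minimal logical footprint. Either argument is a complete proof of the lemma.
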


\begin{proof}
Let $M^i$ be an irreducible $V$-module with minimal quantum dimension. By Lemma \ref{lem qdim}, obviously $\qdim_V M^i>0$ and $\qdim_V M>0$.  Then using Propsition \ref{prop tensor}, we get
\begin{equation*}
\qdim_V M^i \cdot \qdim_V M= \qdim_V \big ( M^i\boxtimes M\big )> 0.
\end{equation*}
$M^i$ was chosen to be of minimal quantum dimension, therefore
\begin{equation*}
\qdim_V M^i \cdot \qdim_V M\geq \qdim_V M^i> 0.
\end{equation*}
Thus we get $\qdim_V M\geq 1.$
\end{proof}

\begin{lem}\label{simple current one}
Let $V$ be a rational and $C_2$-cofinite simple \voa of $CFT$ type with $V\cong V'$, and let $M^0,\ M^1,\ \cdots ,\ M^d$ be as before with the corresponding conformal weights $\l_i>0,$ $0<i<d$. If  $M$ is a simple current of $V$, then $\qdim_V M=1$.
\end{lem}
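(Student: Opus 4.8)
The plan is to use the multiplicativity of quantum dimensions under tensor product (Proposition \ref{prop tensor}) together with the fact that a simple current has an inverse under $\boxtimes$. Since $M$ is a simple current, $M \boxtimes M'$ is a simple $V$-module; I claim it is $V$ itself. Indeed, by Lemma \ref{tensor nonzero} the module $M \boxtimes M'$ is nonzero, and there is a nonzero intertwining operator of type $\binom{V}{M\ M'}$ coming from the contragredient pairing (equivalently, $N_{i,i'}^0 \geq 1$; see Remark \ref{rem tensor}). Since $M \boxtimes M'$ is simple, this forces $M \boxtimes M' \cong V$.

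Next I would apply Proposition \ref{prop tensor} to this isomorphism: writing $M = M^i$, we get
\begin{equation*}
\qdim_V M^i \cdot \qdim_V M^{i'} = \qdim_V (M^i \boxtimes M^{i'}) = \qdim_V V = 1.
\end{equation*}
Here $\qdim_V V = 1$ is immediate from the definition, and $\qdim_V M^{i'}$ exists by Lemma \ref{lem qdim}. Now by Remark \ref{M'} (invariance of quantum dimension under contragredient), $\qdim_V M^{i'} = \qdim_V M^i$, so $(\qdim_V M^i)^2 = 1$. Since $\qdim_V M^i > 0$ by Lemma \ref{lem qdim}, we conclude $\qdim_V M^i = 1$.

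Alternatively, and even more directly, one can avoid Remark \ref{M'}: from $\qdim_V M^i \cdot \qdim_V M^{i'} = 1$ and the fact that both factors are at least $1$ by Lemma \ref{>1}, each factor must equal $1$. This second route is perhaps cleaner since Lemma \ref{>1} has just been established. The main point requiring a little care is the identification $M \boxtimes M' \cong V$: one must be sure that the nonzero intertwining operator witnessing $N_{i,i'}^0 \geq 1$ genuinely exists, which follows from the standard fact (Remark \ref{rem tensor}, together with the properties of the $S$-matrix in Theorem \ref{Verlinde}) that $N_{i,j}^0 = \delta_{j,i'}$ for a simple rational $C_2$-cofinite VOA with $V \cong V'$. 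I do not expect any serious obstacle beyond assembling these ingredients in the right order.
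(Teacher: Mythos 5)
Your proposal is correct and follows essentially the same route as the paper: the paper's proof also asserts $M\boxtimes M'=V$ (stating it as immediate) and then applies Proposition \ref{prop tensor} together with Remark \ref{M'} to get $(\qdim_V M)^2=1$. Your extra care in justifying $M\boxtimes M'\cong V$ via $N_{i,i'}^0\geq 1$ and simplicity, and the alternative ending via Lemma \ref{>1}, are fine refinements but not a different method.
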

 \begin{proof}
 Let $M$ be a simple current of $V$.  Immediately we have $M\boxtimes M'=V$. Then by Remark \ref{M'} and Proposition \ref{prop tensor} we have
\begin{equation*}
1=\qdim_V V=\qdim_V (M\boxtimes M')=\qdim_V M\cdot \qdim_V M'=(\qdim_V M)^2.
\end{equation*}
 This implies
 $\qdim_V M=1.$
 \end{proof}

\begin{lem}\label{lem every occur}
Let V be a rational and $C_2$-cofinite simple vertex operator algebra, and $M^0\cong V, M^1, \cdots, M^d$ be as before.  Fix an irreducible $V$-module $M^s,$ then for any $0\leq i\leq d,$ we have
\begin{equation*}
M^i\subseteq M^s\boxtimes M^j
\end{equation*}
for some $0\leq j \leq d.$
\end{lem}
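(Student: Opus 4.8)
The plan is to use the Verlinde formula together with the fact that the $S$-matrix is invertible. Fix the irreducible module $M^s$ and fix an arbitrary index $i$ with $0 \le i \le d$. By Remark \ref{rem tensor}, $M^s \boxtimes M^j = \sum_{k=0}^d N_{s,j}^k M^k$, so the statement $M^i \subseteq M^s \boxtimes M^j$ for some $j$ is equivalent to the assertion that $N_{s,j}^i \ne 0$ for some $j$. Thus it suffices to show that the numbers $N_{s,0}^i, N_{s,1}^i, \ldots, N_{s,d}^i$ are not all zero.

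I would argue by contradiction: suppose $N_{s,j}^i = 0$ for every $j$. By Theorem \ref{Verlinde}(3) we have the closed formula
\begin{equation*}
N_{s,j}^i = \sum_{t=0}^d \frac{S_{s,t} S_{j,t} S^{-1}_{t,i}}{S_{0,t}}.
\end{equation*}
Define $c_t = \dfrac{S_{s,t} S^{-1}_{t,i}}{S_{0,t}}$, which is well defined since $S_{0,t} \ne 0$ for all $t$ by Theorem \ref{Verlinde}(4). Then the vanishing assumption reads $\sum_{t=0}^d c_t S_{j,t} = 0$ for every $j = 0, \ldots, d$; in matrix form, $S^{\mathsf T} c = 0$ where $c = (c_0, \ldots, c_d)^{\mathsf T}$. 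Since $S$ is invertible (Theorem \ref{Verlinde}(2) gives $S^2 = (\delta_{i,j'})$, hence $S$ is invertible), so is $S^{\mathsf T}$, and we conclude $c = 0$, i.e. $c_t = 0$ for all $t$. But $c_0 = \dfrac{S_{s,0} S^{-1}_{0,i}}{S_{0,0}}$, and $S_{s,0} \ne 0$ by Theorem \ref{Verlinde}(4) while $S^{-1}_{0,i} = S_{0,i'} \ne 0$ again by Theorem \ref{Verlinde}(1) and (4); hence $c_0 \ne 0$, a contradiction. Therefore some $N_{s,j}^i \ne 0$, which gives $M^i \subseteq M^s \boxtimes M^j$ for that $j$.

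The only place that requires care is making sure all the denominators $S_{0,t}$ and the relevant entries $S_{s,0}$, $S^{-1}_{0,i}$ are nonzero; this is exactly what Theorem \ref{Verlinde} supplies (part (4) for $S_{0,t}\neq 0$ and $S_{s,0}\neq 0$, part (1) to rewrite $S^{-1}_{0,i}$ as an $S$-entry), so there is no genuine obstacle. An alternative, slightly slicker route avoiding the contradiction: sum $N_{s,j}^i$ against a suitable vector to extract a single nonzero $S$-entry directly — but the contradiction argument above is the cleanest to write down. One should also note the hypotheses implicitly needed ($V$ simple, rational, $C_2$-cofinite, CFT type, $V \cong V'$) so that Theorem \ref{Verlinde} applies; these match the running assumptions on $V$ in this section.
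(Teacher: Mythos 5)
Your proof is correct, but it takes a genuinely different route from the paper's. The paper argues module-theoretically: since $V\subseteq M^s\boxtimes M^{s'}$, one has $M^i=V\boxtimes M^i\subseteq (M^s\boxtimes M^{s'})\boxtimes M^i=M^s\boxtimes(M^{s'}\boxtimes M^i)$, and expanding $M^{s'}\boxtimes M^i=\oplus_{j}N_{s',i}^jM^j$ and using the simplicity of $M^i$ forces $M^i\subseteq M^s\boxtimes M^j$ for some $j$; the only inputs are the existence, associativity and duality properties of $\boxtimes$ from Remark \ref{rem tensor}. You instead reduce the claim to the nonvanishing of the vector $(N_{s,j}^i)_{j=0}^d$ and get it from Theorem \ref{Verlinde}: writing $N_{s,j}^i=\sum_t S_{j,t}c_t$ with $c_t=S_{s,t}S^{-1}_{t,i}/S_{0,t}$, invertibility of $S$ together with $c_0=S_{s,0}S_{0,i'}/S_{0,0}\neq 0$ (parts (1), (2) and (4)) yields a contradiction. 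Both arguments are complete. Your route buys an explicit linear-algebra proof that the whole column of fusion numbers cannot vanish, but it leans on the full force of Huang's Verlinde formula, hence on CFT type and $V\cong V'$, which you correctly flag as hypotheses beyond the bare statement of the lemma; the paper's route avoids the $S$-matrix entirely (though it, too, implicitly needs $V\subseteq M^s\boxtimes M^{s'}$, i.e.\ a duality fact of the same provenance) and is shorter. One cosmetic point: Theorem \ref{Verlinde}(4) literally gives $S_{0,s}\neq 0$, so to assert $S_{s,0}\neq 0$ you should also cite the symmetry of $S$ from part (2).
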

 \begin{proof}
 Again, the rationality and $C_2$-cofiniteness guarantee that tensor products are well defined and associative. So
\begin{equation*}
M^i\subseteq (M^s\boxtimes M^{s'})\boxtimes M^i=M^s\boxtimes (M^{s'}\boxtimes M^i),
\end{equation*}
where $M^{s'}$ is the dual of $M^s.$
Note that  $M^{s'}\boxtimes M^i=\oplus_{j=0}^dN_{s',i}^jM^j.$ Thus
\begin{equation*}
M^i\subseteq \bigoplus_{j=0}^dN_{s',i}^j M^s\boxtimes M^j.
\end{equation*}
Since $M^i$ is simple,  $M^i\subseteq M^s\boxtimes M^j$ for some $j.$
\end{proof}

 \begin{prop}\label{simple current}
Let $V$ be a rational and $C_2$-cofinite simple \voa of $CFT$ type with $V\cong V'$, and let $M^0,\ M^1,\ \cdots ,\ M^d$ be as before with the corresponding conformal weights $\l_i>0,$ $0<i\leq d$, then a $V$-module $M$ is a simple current if and only if $\qdim_VM =1$.
\end{prop}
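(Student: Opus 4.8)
The forward direction is already Lemma~\ref{simple current one}, so the plan is to prove that $\qdim_V M = 1$ forces $M$ to be a simple current. First I would note that $M$ must be irreducible: write $M$ as a direct sum of irreducible modules (rationality), and observe that since $\qdim_V$ is additive over direct sums and every irreducible module has quantum dimension $\geq 1$ by Lemma~\ref{>1}, the value $1$ can only occur when there is a single irreducible summand, of quantum dimension $1$. So write $M = M^s$ with $\qdim_V M^s = 1$.

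The crucial step is to show $M^s \boxtimes M^{s'} = V$. By Proposition~\ref{prop tensor} and Remark~\ref{M'} we have $\qdim_V(M^s \boxtimes M^{s'}) = (\qdim_V M^s)(\qdim_V M^{s'}) = 1$. Write $M^s \boxtimes M^{s'} = \bigoplus_{k=0}^d N_{s,s'}^k M^k$. Using the Verlinde formula (Theorem~\ref{Verlinde}(3)) together with $S^{-1}_{t,0} = S_{t,0}$ --- which holds because $V \cong V'$ gives $0' = 0$ --- and $S^2 = (\delta_{i,j'})$ from Theorem~\ref{Verlinde}(2), one computes $N_{s,s'}^0 = \sum_{t=0}^d S_{s,t}S_{s',t} = (S^2)_{s,s'} = 1$. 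Hence
$$1 = \qdim_V(M^s \boxtimes M^{s'}) = N_{s,s'}^0 \cdot 1 + \sum_{k\neq 0} N_{s,s'}^k\, \qdim_V M^k \geq 1 + \sum_{k\neq 0} N_{s,s'}^k,$$
where the inequality uses $\qdim_V M^k \geq 1$ (Lemma~\ref{>1}). Therefore $N_{s,s'}^k = 0$ for all $k \neq 0$, i.e.\ $M^s \boxtimes M^{s'} = V$; by commutativity of the tensor product (Remark~\ref{rem tensor}) also $M^{s'} \boxtimes M^s = V$.

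Finally, I would let $M^j$ be an arbitrary irreducible module and use associativity to get
$$M^{s'} \boxtimes (M^s \boxtimes M^j) = (M^{s'} \boxtimes M^s) \boxtimes M^j = V \boxtimes M^j = M^j,$$
using Remark~\ref{V*M}. Expanding $M^s \boxtimes M^j = \bigoplus_k N_{s,j}^k M^k$ and applying $M^{s'}\boxtimes(-)$ yields $M^j = \bigoplus_k N_{s,j}^k (M^{s'} \boxtimes M^k)$. By Lemma~\ref{tensor nonzero} each $M^{s'} \boxtimes M^k$ is nonzero, while $M^j$ is irreducible; hence exactly one index $k_0$ has $N_{s,j}^{k_0} \neq 0$, and then necessarily $N_{s,j}^{k_0} = 1$ (with $M^{s'}\boxtimes M^{k_0} \cong M^j$). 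Consequently $M^s \boxtimes M^j = M^{k_0}$ is irreducible. Since every irreducible $V$-module is one of the $M^j$, this shows $M^s \boxtimes W$ is irreducible for every irreducible $W$, so $M^s$ is a simple current.

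I expect the main obstacle to be the middle paragraph: establishing $M^s \boxtimes M^{s'} = V$. This needs the exact value $N_{s,s'}^0 = 1$ from the Verlinde formula --- and here the hypothesis $V \cong V'$ is genuinely used --- combined with the positivity input $\qdim_V M^k \geq 1$ to annihilate the remaining fusion coefficients. Once this identity is secured, the passage to arbitrary $M^j$ is a clean formal argument relying only on associativity, commutativity, $V \boxtimes M = M$, and $M \boxtimes N \neq 0$.
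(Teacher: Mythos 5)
Your proof is correct, but it follows a genuinely different route from the paper's. The paper never isolates the identity $M\boxtimes M'\cong V$; instead it invokes Lemma \ref{lem every occur} to get the containment $\bigoplus_{i=0}^d M^i\subseteq \bigoplus_{i=0}^d M\boxtimes M^i$, compares total quantum dimensions of the two sides (using $\qdim_V M=1$ and Proposition \ref{prop tensor}) to force equality, and then uses Lemma \ref{tensor nonzero} to count: $d+1$ irreducibles on the left against $d+1$ nonzero summands on the right, so each $M\boxtimes M^i$ is irreducible. You instead localize the argument: you extract $N_{s,s'}^0=1$ from Theorem \ref{Verlinde} (using $V\cong V'$ so that $0'=0$, plus $S^2=(\delta_{i,j'})$), combine it with $\qdim_V(M^s\boxtimes M^{s'})=1$ and the bound $\qdim_V M^k\geq 1$ of Lemma \ref{>1} to kill all other fusion coefficients, and then run the standard invertible-object argument via associativity, $V\boxtimes M^j=M^j$, and Lemma \ref{tensor nonzero}. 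Your version yields more structural information along the way --- $M$ is explicitly invertible with inverse $M'$, and $M\boxtimes(-)$ permutes the irreducibles --- and your opening reduction to $M$ irreducible (additivity of $\qdim$ plus Lemma \ref{>1}) is spelled out more carefully than the paper's one-line appeal to Lemma \ref{>1}; the paper's counting argument, on the other hand, bypasses the explicit Verlinde computation of $N_{s,s'}^0$ and is somewhat shorter. Both proofs draw on the same toolkit (Proposition \ref{prop tensor}, Lemma \ref{>1}, Lemma \ref{tensor nonzero}, associativity and distributivity of $\boxtimes$), so neither needs hypotheses beyond those stated.
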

 \begin{proof}
 By Lemma \ref{simple current one} any simple current has quantum dimension 1. Now assume $M$ is a $V$-module such that $\qdim_VM =1$. Obviously, Lemma \ref{>1} shows that $M$ is simple.
 Lemma \ref{lem every occur} claims that
  \begin{equation*}
  \bigoplus_{i=0}^d M^i \subseteq \bigoplus_{i=0}^d M \boxtimes M^i.
  \end{equation*}
This implies
\begin{equation*}
\qdim_V(\bigoplus_{i=0}^d M^i)\leq \qdim_V\bigoplus_{i=0}^d M \boxtimes M^i.
\end{equation*}

Since $\qdim_V M=1,$ computing the quantum dimensions on both sides of the above equation gives
  \begin{equation*}
  \qdim_V (\bigoplus_{i=0}^d M^i)= \sum_{i=0}^d 1\cdot \qdim_V M^i=\sum_{i=0}^d \qdim_V M\boxtimes M^i=\qdim_V (\bigoplus_{i=0}^d M \boxtimes M^i).
  \end{equation*}
  We obtain $\oplus_{i=0}^d M^i = \oplus_{i=0}^d M \boxtimes M^i.$

By Lemma \ref{tensor nonzero}, we can conclude that for any $M^i$, $i=0, \ \ldots,\ d$, there exists some $j=0, \ \ldots,\ d$, such that $M\boxtimes M^i= M^j$, i.e. $M$ is a simple current.
\end{proof}

\subsection{Examples}
 Now, by using the properties above we can compute more examples about quantum dimensions.

\begin{ex}\label{ex affine} {\rm Let $\frak{g}$ be a finite dimensional simple Lie algebra with Cartan subalgebra $\mathfrak{h}$, and $\hat{\frak{g}}$ be the corresponding affine Lie algebra.
Fix a positive integer $k.$ For any $\l \in \mathfrak{h}^*,$ we denote the corresponding irreducible highest weight module for $\hat{\frak{g}}$ by $L_{\frak{g}}(k,\l ).$ It is proved in \cite{DL}, \cite{FZ}, \cite{Li2} that  $L_{\frak{g}}(k,0)$ is a rational simple \voa and all irreducible $L_{\frak{g}}(k,0)$-modules are classified as $\{L_{\frak{g}}(k,\l )|\langle\l ,\theta\rangle\leq k,\l\in \mathfrak{h}^* \hbox{ is a dominant integral weight}\},$ where $\theta$ is the longest root of $\frak{g},$ and $(\theta,\theta)=2.$

The quantum dimensions of the irreducible $L_{\frak{g}}(k,0)$-modules can be computed using Lemma \ref{lem qdim}
and the modular transformation property of affine characters \cite{K}. The formula is given in \cite{C}:
\begin{equation*}
\qdim_{L_{\frak{g}}(k,0)}L_{\frak{g}}(k,\lambda)=\prod_{\alpha>0}\frac{\langle \lambda+\rho,\alpha\rangle_q}{\langle \rho,\alpha\rangle_q},
\end{equation*}
where $\rho$ is the Weyl vector,
$\alpha$ belongs to the set of positive roots and $n_q=\frac{q^n-q^{-n}}{q-q^{-1}}$, where $q=e^{i\pi/(k+\check{h})}$ and $\check{h}$ is the dual coxeter number of $\frak{g}$. In the operator algebra framework, the statistic dimension of $L_{\frak{g}}(k,\lambda)$ is given by the same formula when $\frak{g}$ is of type $A$ in \cite{Wa}.}
\end{ex}

\begin{ex} {\rm Let $V_L$ be the lattice \voa associated to an positive definite even lattice $L$. It is proved in \cite{D}, \cite{DLM4} that $V_L$ is rational and $C_2$-cofinite, and $\{V_{L+\mu_i}|i\in L^\circ/L\}$ gives a complete list of all irreducible $V_L$-modules, where $L^\circ$ is the dual lattice of $L$. By \cite{DL}, every irreducible $V_L$-module is a simple current. Thus
\begin{equation*}
\qdim_{V_L}V_{L+\mu_i}=1
\end{equation*}
by Proposition \ref{simple current}.

This implies a well known property on the theta functions of lattices. Note that $\ch_q V_{L+\mu_i}=\frac{\theta_{L+\mu_i}(q)}{\eta(q)^d}$
where $\theta_{L+\mu_i}(q)=\sum_{\alpha\in L+\mu_i}q^{(\alpha,\alpha)/2}$ and $\eta(q)=q^{1/24}\prod_{n\geq 1}(1-q^n)$ and $d$ is the rank of $L.$ Then
$$\lim_{q\to 1}\frac{\theta_{L+\mu_i}(q)}{\theta_L(q)}=\lim_{q\to 1}\frac{\ch_qV_{L+\mu_i}}{\ch_qV_L}=1. $$}
\end{ex}

\begin{ex} {\rm
We now consider the \voa $L(\frac{1}{2},0)$ associated to the Virasoro algebra with central charge $\frac{1}{2}$. This is a rational and $C_2$-cofinite \voa with only three irreducible modules $L(\frac{1}{2},0)$, $L(\frac{1}{2},\frac{1}{2})$, and $L(\frac{1}{2},\frac{1}{16})$ (see \cite{DLM4}, \cite{DMZ}, \cite{W}). We use two different methods to compute the quantum dimensions of the modules.

(1) We first compute the  quantum dimensions by using the $S$-matrix (see Lemma \ref{lem qdim}).  For short we use  $Z_0(\tau),$ $Z_{\frac{1}{2}}(\tau),$ $Z_{\frac{1}{16}}(\tau)$ to denote  $Z_{L(\frac{1}{2},0)}(\t)$, $Z_{L(\frac{1}{2}, \frac{1}{2})}(\t)$, and $Z_{L(\frac{1}{2},\frac{1}{16})}(\t)$ respectively. The modular transformation rules below is given in \cite{Ka}:
\begin{equation*}
\begin{split}
Z_0(-\frac{1}{\tau})&=\frac{1}{2}Z_{0}(\tau)+\frac{1}{2}Z_{\frac{1}{2}}(\tau)+\frac{\sqrt{2}}{2}Z_{\frac{1}{16}}(\tau),\\
Z_{\frac{1}{2}}(-\frac{1}{\tau})&=\frac{1}{2}Z_{0}(\tau)+\frac{1}{2}Z_{\frac{1}{2}}(\tau)-\frac{\sqrt{2}}{2}Z_{\frac{1}{16}}(\tau),\\
Z_{\frac{1}{16}}(-\frac{1}{\tau})&=\frac{\sqrt{2}}{2}Z_{0}(\tau)-\frac{\sqrt{2}}{2}Z_{\frac{1}{2}}(\tau).
\end{split}
\end{equation*}
By Lemma \ref{lem qdim}, we get
\begin{equation}
\begin{split}
\qdim_{L(\frac{1}{2},~0)}{L(\frac{1}{2},~0)}&=1,\\
\qdim_{{L(\frac{1}{2},~0)}}L(\frac{1}{2},~\frac{1}{2})&=\frac{1/2}{1/2}=1,\\
\qdim_{{L(\frac{1}{2},~0)}}L(\frac{1}{2},\frac{1}{16})&=\frac{\sqrt{2}/2}{1/2}=\sqrt{2}.
\end{split}
\end{equation}

(2) We can also compute the quantum dimensions by using Proposition \ref{prop tensor}. The fusion rules for these modules are well known (see \cite{DMZ}, \cite{W}):
 \begin{equation}\label{fusion Vir}
\begin{split}
&(1) ~~~L(\frac{1}{2},0){\rm ~is ~the~identity};\\
& (2) ~~~L(\frac{1}{2},\frac{1}{2})\times L(\frac{1}{2},\frac{1}{2})=L(\frac{1}{2},0);\\
& (3) ~~~L(\frac{1}{2},\frac{1}{2})\times L(\frac{1}{2},\frac{1}{16})=L(\frac{1}{2},\frac{1}{16});\\
&(4)  ~~~ L(\frac{1}{2},\frac{1}{16})\times L(\frac{1}{2},\frac{1}{16})=L(\frac{1}{2},0)+L(\frac{1}{2},\frac{1}{2}).
\end{split}
\end{equation}
Obviously, $L(\frac{1}{2},0)$ and $L(\frac{1}{2},\frac{1}{2})$ are simple currents, so we have
\begin{equation}\label{0,1/2}
\qdim_{L(\frac{1}{2},0)}L(\frac{1}{2},0)=\qdim_{L(\frac{1}{2},0)}L(\frac{1}{2},\frac{1}{2})=1.
\end{equation}
Using (\ref{fusion Vir}) (\ref{0,1/2}) and Proposition \ref{prop tensor} we have
\begin{equation*}
\big{[}\qdim_{L(\frac{1}{2},~0)}L(\frac{1}{2},\frac{1}{16})\big{]}^2=1+1=2.
\end{equation*}
That implies
\begin{equation*}
\qdim_{L(\frac{1}{2},~0)}L(\frac{1}{2},\frac{1}{16})=\sqrt{2}.
\end{equation*}}
\end{ex}

\begin{ex}
{\rm Let $L(c,h)$ be the irreducible Virasoro algebra module as in Example \ref{dim L(1,h)}. It is proved in \cite{W} that the \voa $L(c,0)$ is rational if and only if $c=c_{p,q}=1-\frac{6(p-q)^2}{pq},$ where $p,q\in \{2,3,4,\cdots\},$ and $p, q$ are relatively prime. And all irreducible $L(c_{p,q},0)$-modules are classified as $L(c_{p,q},h_{m,n})$ for $0<m<p,$ $0<n<q,$ where $h_{m,n}=\frac{(np-mq)^2-(p-q)^2}{4pq}.$   The fusion rules are also determined in \cite{W}.

Let $\chi_{m,n}^{p,q}(\tau)$ denotes the character of $L(c_{p,q},h_{m,n}).$ The $S$-modular transformation of characters has the following form \cite{CIZ1}, \cite{CIZ2}, \cite{IZ}:
\begin{equation*}
\chi_{m,n}^{p,q}(-1/\tau)=\sum_{m',n'}S_{m,n}^{m',n'}\chi_{m',n'}^{p,q}(\tau),
\end{equation*}
where
\begin{equation}\label{S Vir}
S_{m,n}^{m',n'}=\sqrt{\frac{8}{pq}}(-1)^{m'n+n'm+1}\sin(\frac{\pi mm'q}{p})\sin(\frac{\pi nn'p}{q}).
\end{equation}

 The case $p=q-1$ gives the unitary discrete series, which has been extensively studied in physics and conformal nets theory. Using Lemma \ref{lem qdim} and formula \ref{S Vir} one can easily compute the quantum dimensions of irreducible modules for the unitary discrete series. For the nonunitary case, $\l_{\min}=\frac{1-(p-q)^2}{4pq}\neq 0$   (see \cite{DM3}). In this case, one needs to find $m,n$ such that $|mq-np|=1$.  We only discuss two special cases $(p,q)=(2,5)$ and $(3,5)$ here.  The quantum dimensions of their irreducible modules are computed using the $S$ matrix and Remark \ref{nonunitary}.

1) The case $(p,q)=(2,5):$ $c_{p,q}=-\frac{22}{5}$ and $L(c_{2,5},0)$ has 2 irreducible modules $L(c_{2,5}, h_{1,n}),$ $n=1,2.$ Here $h_{1,1}=0,$ and $h_{1,2}=-\frac{1}{5}=\l_{\min}.$ A straightforward calculation gives
$$S_{1,1}^{1,2}=\sqrt{\frac{4}{5}}\sin (\frac{4\pi}{5}),~ S_{1,2}^{1,2}=-\sqrt{\frac{4}{5}}\sin (\frac{8\pi}{5}).$$
Thus $$\qdim_{L(c_{2,5},0)}L(c_{2,5},h_{1,1})=1,$$ and $$\qdim_{L(c_{2,5},0)}L(c_{2,5},h_{1,2})= \frac{ S_{1,2}^{1,2}}{S_{1,1}^{1,2}}=-\frac{\sqrt{\frac{4}{5}}\sin (\frac{8\pi}{5})}{\sqrt{\frac{4}{5}}\sin (\frac{4\pi}{5})}=2\cos\frac{\pi}{5}.$$

2) The case $(p,q)=(3,5):$ $c_{p,q}=-\frac{3}{5}$ and $L(c_{3,5},0)$ has 4 irreducible modules $L(c_{3,5},h_{1,n}),$ $n=1,2,3,4.$  Here $h_{1,1}=0,$ $h_{1,2}=-\frac{1}{20}=\l_{\min},$ $h_{1,3}=\frac{1}{5}$ and $h_{1,4}=\frac{3}{4}.$
By using (\ref{S Vir}), it is easy to get
\begin{equation*}
\begin{split}
&S_{1,1}^{1,2}=\sqrt{\frac{8}{15}}\sin(\frac{5\pi}{3})\sin(\frac{6\pi}{5});\\
&S_{1,2}^{1,2}=-\sqrt{\frac{8}{15}}\sin(\frac{5\pi}{3})\sin(\frac{12\pi}{5});\\
&S_{1,3}^{1,2}=\sqrt{\frac{8}{15}}\sin(\frac{5\pi}{3})\sin(\frac{18\pi}{5});\\
&S_{1,4}^{1,2}=-\sqrt{\frac{8}{15}}\sin(\frac{5\pi}{3})\sin(\frac{24\pi}{5}).
\end{split}
\end{equation*}
This implies
\begin{equation*}
\begin{split}
&\qdim_{L(c_{3,5},0)}L(c_{3,5},h_{1,1})=\qdim_{L(c_{3,5},0)}L(c_{3,5},h_{1,4})=1;\\
&\qdim_{L(c_{3,5},0)}L(c_{3,5},h_{1,2})=\frac{S_{1,2}^{1,2}}{S_{1,1}^{1,2}}=2\cos(\frac{\pi}{5});\\
&\qdim_{L(c_{3,5},0)}L(c_{3,5},h_{1,3})=\frac{S_{1,3}^{1,2}}{S_{1,1}^{1,2}}=2\cos(\frac{\pi}{5}).
\end{split}
\end{equation*}
}
\end{ex}

In the following example, the \voa $V$ is neither rational nor $C_2$-cofinite. But the quantum dimensions of its modules are still multiplicative under tensor product.

\begin{ex}\label{L(1,0)}{\rm  Recall $L(1,0)$ and $L(1,h)$ from Example \ref{dim L(1,h)}.
The fusion rules for irreducible $L(1,0)$-modules are given in \cite{M} and \cite{DJ}. Assume  $m,n\in \mathbb{Z}_+$ and $m\geq n$, then
\begin{equation*}
L(1,m^2)\boxtimes L(1,n^2)=\bigoplus_{k=m-n}^{m+n}L(1,k^2).
\end{equation*}
By Example \ref{dim L(1,h)}, we have
\begin{equation*}
\begin{split}
 &\qdim_{L(1,0)}(L(1,m^2) \boxtimes L(1,n^2))\\
 =&\qdim_{L(1,0)}({\bigoplus_{k=m-n}^{m+n}L(1,k^2)})\\
=&\sum_{k=m-n}^{m+n}(2k+1)\\
=& (2m+1)(2n+1)\\
=&\qdim_{L(1,0)}L(1,m^2)\cdot \qdim_{L(1,0)}L(1,n^2).
\end{split}
\end{equation*}
From the fusion rules, the only simple current among $L(1,m^2)$  is $L(1,0),$ which has quantum dimension $1.$}
\end{ex}

By Example \ref{L(1,0)}, it seems that even for \voas \ that are not rational, Propositions \ref{prop tensor} and \ref{simple current} are still  true. But the proofs of  these propositions require that $V$ is rational and $C_2$-cofinite as the modularity of trace functions are used. This gives us a good reason to believe that there might be an alternating definition for quantum dimensions which does not involve the trace functions. We believe that the other two limits given in Remark \ref{equidef1} are in the right direction.

\section{Possible Values of Quantum Dimensions}

In this section we will give a result on possible values of quantum dimensions using the graph theory and
Perron-Frobenius Theorem. It turns out that the values of quantum dimensions are closely related to the possible values of the index of subfactors \cite{J}.

\begin{de}
Let $A$ be an $n\times n$ matrix over $\mathbb{R}$, and $\l_1, ..., \l_n$ be all the eigenvalues of A. Then its spectral radius $\rho(A)$ is defined as:
\begin{equation*}
\rho(A)=\max_i(|\l_i|).
\end{equation*}
\end{de}

Next result is part of Perron-Frobenius Theorem (cf. \cite{BH}).

\begin{thm}
Let $A = (a_{ij})$ be an $n\times n$ positive matrix: $a_{ij} > 0,$ for $1 \leq i, j \leq n.$ Then the following statements hold:

(1) There is a positive real number r, 
 such that r is an eigenvalue of A and any other eigenvalue $\l$ (possibly complex) is strictly smaller than r in absolute value, $ |\l| < r,$ i.e. $\rho(A)=r$;

(2) There exists an eigenvector $v = (v_1, \cdots,v_n)$ of A with eigenvalue r such that all components of $v$ are positive;

(3) There are no other positive (moreover non-negative) eigenvectors except positive multiples of v, i.e. all other eigenvectors must have at least one negative or non-real component.
\end{thm}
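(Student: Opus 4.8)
The plan is to prove this by combining a compactness/fixed-point argument for existence with a comparison against an auxiliary positive left eigenvector. Throughout write $r$ for the eigenvalue whose existence we are after.

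First I would establish (2), the existence of a strictly positive eigenvector. Let $\Delta=\{x\in\R^n:x_i\geq 0,\ \sum_i x_i=1\}$ be the standard simplex, which is compact and convex. Since every entry of $A$ is positive, $Ax$ has all positive coordinates for every $x\in\Delta$, so $\|Ax\|_1>0$ and $f(x)=Ax/\|Ax\|_1$ is a continuous self-map of $\Delta$. By the Brouwer fixed point theorem there is $v\in\Delta$ with $f(v)=v$, i.e. $Av=rv$ with $r:=\|Av\|_1>0$; and $v=Av/r$ forces every coordinate of $v$ to be strictly positive. Applying the same construction to $A^T$ (also a positive matrix) produces a strictly positive vector $w$ with $A^Tw=r'w$, $r'>0$; pairing gives $r'(w^Tv)=w^TAv=r(w^Tv)$ with $w^Tv>0$, hence $r'=r$, so $w$ is a positive left eigenvector for the same value $r$.

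Next I would prove (1). For an arbitrary eigenvalue $\lambda$ with eigenvector $x\neq 0$, taking absolute values coordinatewise and using $a_{ij}>0$ gives $|\lambda|\,|x|=|Ax|\leq A|x|$. Pairing with $w$ and using $w^TA=rw^T$ yields $|\lambda|(w^T|x|)\leq r(w^T|x|)$ with $w^T|x|>0$, so $|\lambda|\leq r$; since $r$ is itself an eigenvalue, $\rho(A)=r$. Now suppose $|\lambda|=r$. Then the inequality $|\lambda|\,|x|\leq A|x|$ is an equality in every coordinate, so $A|x|=r|x|$; as $A|x|$ has all positive coordinates whenever $x\neq 0$, the vector $|x|$ is strictly positive. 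Equality then also holds in each scalar triangle inequality $\big|\sum_j a_{ij}x_j\big|\leq\sum_j a_{ij}|x_j|$ with all $a_{ij}|x_j|>0$, which forces all the complex numbers $x_j$ to share a common argument, i.e. $x=\zeta|x|$ for some $\zeta$ with $|\zeta|=1$. Substituting back, $\lambda x=Ax=\zeta A|x|=r\zeta|x|=rx$, so $\lambda=r$. Finally I would prove (3): if $u\neq 0$ is any nonnegative eigenvector, $Au=\mu u$, then $Au$ has all positive coordinates, so $u$ is strictly positive and $\mu>0$, and pairing with $w$ as above gives $\mu=r$; thus every nonnegative eigenvector belongs to the eigenvalue $r$ and is strictly positive. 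If $u,v$ are two such, put $t=\min_i u_i/v_i>0$; then $u-tv$ is a nonnegative eigenvector for $r$ with at least one vanishing coordinate, hence cannot be strictly positive, so $u-tv=0$ and $u=tv$.

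The step I expect to be the main obstacle is the strict-dominance part of (1): making the equality analysis of the triangle inequality fully rigorous, namely arguing that coordinatewise equality in $|\lambda|\,|x|\leq A|x|$ together with equality in each scalar estimate $\big|\sum_j a_{ij}x_j\big|\leq\sum_j a_{ij}|x_j|$ (using positivity of all $a_{ij}$ and of all $|x_j|$) forces $x$ to be a unimodular scalar times a positive vector. The rest is either a direct invocation of Brouwer's theorem or elementary pairing against the positive left eigenvector $w$.
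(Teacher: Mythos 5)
Your proposal is correct. Note, however, that the paper does not prove this statement at all: it is quoted as ``part of Perron--Frobenius Theorem (cf.\ [BH])'' and used as a black box, so there is no in-paper argument to compare against. Your proof is one of the standard self-contained routes: Brouwer's fixed point theorem on the simplex for existence of the positive (right and left) eigenvectors, pairing against the strictly positive left eigenvector $w$ to get $|\lambda|\leq r$ and to pin down the eigenvalue of any nonnegative eigenvector, the equality case of the triangle inequality for strict dominance $|\lambda|<r$ when $\lambda\neq r$, and the $\min_i u_i/v_i$ subtraction trick for uniqueness up to scalar. All the steps check out; the only place where you should be slightly more explicit is the passage from $|\lambda|=r$ to coordinatewise equality $A|x|=r|x|$: this follows because $A|x|-r|x|\geq 0$ componentwise while $w^{T}\bigl(A|x|-r|x|\bigr)=0$ and $w$ is strictly positive, which forces every component of $A|x|-r|x|$ to vanish. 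With that one line inserted, the argument is complete, and it is comparable in economy to the other classical proofs (Collatz--Wielandt variational characterization, or contraction in Hilbert's projective metric); your Brouwer-based existence step trades the purely algebraic/variational machinery for a topological fixed point theorem, which is a perfectly acceptable price here.
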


\begin{rem}\label{rem PF}
In the case that $A$ is non-negative, we can use positive matrices to approach $A.$ So if there is a positive eigenvector of $A$ with positive eigenvalue $r,$ then $r=\rho(A).$
\end{rem}

\begin{rem}\label{Eigenvector}
Let $V$ be a simple \voa as in Lemma \ref{lem qdim} and $M^i$ for $i=0,\cdots, d$ be the irreducible $V$-modules as before. Theorem \ref{Verlinde} and Lemma \ref{lem qdim} assert that $\qdim_V M^i$ is a positive eigenvalue of $N(i)$ with eigenvector $v=(\frac{S_{0,0}}{S_{0,0}},\frac{S_{0,1}}{S_{0,0}},\cdots,\frac{S_{0,d}}{S_{0,0}})^T$ whose entries are all positive (since each component is a quantum dimension). By Remark \ref{rem PF}, one knows that $\qdim_V M^i$ is exactly the spectral radius of $N(i),$ i.e. $\qdim_V(M^i)=\rho(N(i)).$
\end{rem}

The following lemmas are devoted to proving the main result of this section.
\begin{lem} Let $V$ be a \voa as in Lemma \ref{lem qdim}, and $N(i)=(N_{i,j}^k)_{j,k}$ be the fusion matrix. Then $N(i)^T=N(i').$
\end{lem}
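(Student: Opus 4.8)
The plan is to unwind the definition of the fusion matrices and reduce the claimed identity $N(i)^T = N(i')$ to a symmetry property of the fusion rules that is already recorded in the excerpt. Recall $N(i) = (N_{i,j}^k)_{j,k=0}^d$, so the $(j,k)$-entry of $N(i)^T$ is $N_{i,k}^j$, while the $(j,k)$-entry of $N(i')$ is $N_{i',j}^k$. Thus the lemma is equivalent to the statement
\begin{equation*}
N_{i,k}^j = N_{i',j}^k \qquad \text{for all } 0 \le i,j,k \le d.
\end{equation*}

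First I would invoke the standard symmetries of the fusion rules for a rational, $C_2$-cofinite VOA. From Remark \ref{rem tensor} we have $N_{i,j}^k = N_{i,j'}^{k'}$, and from the commutativity of the tensor product recorded there, $N_{i,j}^k = N_{j,i}^k$. There is also the elementary duality $N_{i,j}^k = N_{i',k}^j$ coming from the natural isomorphism $I_V\binom{M^k}{M^i\,M^j} \cong I_V\binom{M^{j}}{M^{i'}\,M^{k}}$ (equivalently, $N_{i,j}^{k} = \dim \Hom(M^i \boxtimes M^j, M^k) = \dim \Hom(M^j, M^{i'}\boxtimes M^k) = N_{i',k}^j$), using $V \cong V'$ and self-duality properties of intertwining operators; alternatively one derives it from the Verlinde formula of Theorem \ref{Verlinde} together with $S_{i,j'} = S_{i',j}$. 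Applying this duality with the roles arranged appropriately gives $N_{i,k}^j = N_{i',j}^k$, which is exactly what is needed.

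So the key steps, in order, are: (1) write out the $(j,k)$-entries of $N(i)^T$ and $N(i')$ explicitly in terms of the $N_{i,j}^k$; (2) quote the duality $N_{i,j}^k = N_{i',k}^j$ for intertwining operator spaces (citing \cite{FHL}, \cite{H}, or deriving it from the Verlinde formula via $S_{i',j} = S_{i,j'}$); (3) substitute to conclude entrywise equality, hence $N(i)^T = N(i')$.

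The main obstacle is purely expository rather than mathematical: making sure the particular symmetry of $N_{i,j}^k$ that gets used is legitimately available in this setting. The cleanest route, given that Theorem \ref{Verlinde} is already in hand, is probably to avoid intertwining-operator gymnastics altogether and compute directly: by part (3) of Theorem \ref{Verlinde}, $N_{i,k}^j = \sum_{s} \frac{S_{k,s}S_{i,s}S^{-1}_{s,j}}{S_{0,s}}$ and $N_{i',j}^k = \sum_{s} \frac{S_{j,s}S_{i',s}S^{-1}_{s,k}}{S_{0,s}}$, and then use $S_{i',s} = S_{i,s'}$, $S^{-1}_{s,j} = S_{s,j'} = S_{j',s}$, $S^{-1}_{s,k}=S_{k',s}$, together with the symmetry and the substitution $s \mapsto s'$ in the summation index, to match the two expressions term by term. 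This keeps the whole proof within the facts explicitly stated in Section 4 and is the version I would write up.
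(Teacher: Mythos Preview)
Your proposal is correct, and the route you ultimately recommend---computing both $N_{i,k}^j$ and $N_{i',j}^k$ via the Verlinde formula of Theorem~\ref{Verlinde}, then matching them using the $S$-matrix identities $S_{i',s}=S_{i,s'}$, $S^{-1}_{s,j}=S_{s,j'}$, symmetry of $S$, and the reindexing $s\mapsto s'$---is exactly the paper's own argument. The paper carries this out in one displayed chain, deriving $N_{i,j}^k = N_{i',j'}^{k'} = N_{i',k}^j$ purely from Theorem~\ref{Verlinde}(1)--(3), so your instinct to bypass the intertwining-operator symmetries and stay within Section~4 was the right call.
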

\begin{proof} Using Theorem \ref{Verlinde} gives

\begin{equation*}
\begin{split}
N_{i,j}^k&=\sum_{s=0}^d\frac{S_{j,s}S_{i,s}S^{-1}_{s,k}}{S_{0,s}}\\
&=\sum_{s'=0}^d\frac{S_{j',s'}S_{i',s'}S^{-1}_{s',k'}}{S_{0,s'}}\\
&=\sum_{s=0}^d\frac{S_{j',s}S_{i',s}S^{-1}_{s,k'}}{S_{0,s}}\\
&=N_{i',j'}^{k'}=N_{i',k}^j.
\end{split}
\end{equation*}
The proof is complete.
\end{proof}

Since the fusion algebra is commutative,  the following corollary is now obvious.
\begin{coro}\label{normal} The fusion matrix $N(i)$ is normal, i.e.
$N(i)^T N(i)=N(i)N(i)^T.$
\end{coro}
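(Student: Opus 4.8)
The statement to prove is Corollary~\ref{normal}: the fusion matrix $N(i)$ is normal, i.e. $N(i)^T N(i) = N(i) N(i)^T$. The plan is to combine the preceding lemma, which gives $N(i)^T = N(i')$, with the commutativity of the fusion algebra, which is available because $V$ is rational and $C_2$-cofinite (Remark~\ref{rem tensor}). Commutativity of the fusion product $M^i \boxtimes M^j = M^j \boxtimes M^i$ translates into the identity $N(i) N(j) = N(j) N(i)$ for all $i,j$; this is the standard fact that the structure constants $N_{i,j}^k$ define a commutative associative algebra, so the left-multiplication matrices $N(i)$ pairwise commute.

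First I would record that $N(i) N(j) = N(j) N(i)$ for every pair of indices $i, j$, reading off the associativity and commutativity of $\boxtimes$ on the fusion algebra $\bigoplus_k \mathbb{Z} M^k$ (equivalently, this follows directly from the Verlinde formula in Theorem~\ref{Verlinde}: each $N(i)$ is conjugated by the fixed matrix $S$ to the diagonal matrix $\diag(S_{i,s}/S_{0,s})_s$, and diagonal matrices commute). Then I would specialize this to $j = i'$, obtaining $N(i) N(i') = N(i') N(i)$. Finally, substituting $N(i') = N(i)^T$ from the previous lemma yields $N(i) N(i)^T = N(i)^T N(i)$, which is exactly the assertion that $N(i)$ is normal.

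There is essentially no obstacle here — this is a two-line deduction chaining together the previous lemma and the known commutativity of the fusion ring. The only point worth a sentence of care is making sure the commutativity $N(i) N(j) = N(j) N(i)$ is genuinely in hand: it is, both from Remark~\ref{rem tensor} (commutativity and associativity of $\boxtimes$ under the rationality plus $C_2$-cofiniteness hypotheses, which $V$ satisfies as a \voa of CFT type as in Lemma~\ref{lem qdim}) and, independently, from the simultaneous diagonalizability in Theorem~\ref{Verlinde}(4). I would present the argument via the previous lemma and commutativity, which keeps it self-contained within this section.
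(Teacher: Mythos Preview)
Your proposal is correct and matches the paper's own argument exactly: the paper simply states that the corollary is obvious from the commutativity of the fusion algebra together with the preceding lemma $N(i)^T=N(i')$, which is precisely the chain of reasoning you spell out.
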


\begin{lem}\label{norm}
The matrix $
\left(
  \begin{array}{cc}
    0 \ & N(i)\\
    N(i)^T & 0\\
  \end{array}
\right)
$ is a symmetric matrix whose spectral radius equals to $\qdim_V M^i.$
\end{lem}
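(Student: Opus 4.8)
The plan is to verify the two assertions separately. First, symmetry of the block matrix $B=\left(\begin{array}{cc}0 & N(i)\\ N(i)^T & 0\end{array}\right)$ is immediate: transposing a $2\times 2$ block matrix swaps the off-diagonal blocks and transposes each, so $B^T=\left(\begin{array}{cc}0 & (N(i)^T)^T\\ N(i)^T & 0\end{array}\right)=\left(\begin{array}{cc}0 & N(i)\\ N(i)^T & 0\end{array}\right)=B$. Since $B$ is a real symmetric matrix, its spectral radius equals its largest eigenvalue in absolute value, and all its eigenvalues are real.

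The substantive claim is $\rho(B)=\qdim_V M^i$. The key observation is the standard relation between the spectrum of such a block matrix and the singular values of $N(i)$: if $Bv=\mu v$ with $v=(x,y)^T$, then $N(i)y=\mu x$ and $N(i)^T x=\mu y$, whence $N(i)N(i)^T x=\mu^2 x$, so $\mu^2$ is an eigenvalue of $N(i)N(i)^T$; conversely eigenvalues of $N(i)N(i)^T$ lift to $\pm$ their square roots as eigenvalues of $B$. Therefore $\rho(B)^2 = \rho\bigl(N(i)N(i)^T\bigr)$. Now I invoke Corollary \ref{normal}: $N(i)$ is normal, so $N(i)$ and $N(i)^T=N(i')$ commute and are simultaneously diagonalizable, giving $\rho\bigl(N(i)N(i)^T\bigr)=\rho(N(i))\,\rho(N(i)^T)$. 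Since transposition preserves eigenvalues, $\rho(N(i)^T)=\rho(N(i))$, so $\rho(B)^2=\rho(N(i))^2$, i.e. $\rho(B)=\rho(N(i))$. Finally, Remark \ref{Eigenvector} identifies $\rho(N(i))$ with $\qdim_V M^i$, completing the argument.

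The only delicate point is making the step $\rho\bigl(N(i)N(i)^T\bigr)=\rho(N(i))\rho(N(i)^T)$ rigorous, since for general matrices the spectral radius is not multiplicative. Here it is justified because $N(i)$ is normal by Corollary \ref{normal}, so the two factors are simultaneously unitarily diagonalizable; on the common eigenbasis the eigenvalues of the product are the products of the eigenvalues. Alternatively, and perhaps more cleanly, one can argue directly: by Remark \ref{Eigenvector}, the all-positive vector $v=(S_{0,0}/S_{0,0},\dots,S_{0,d}/S_{0,0})^T$ satisfies $N(i)v=\qdim_V(M^i)\,v$, and since $N(i)^T=N(i')$ the same vector is also an eigenvector of $N(i)^T$ (with eigenvalue $\qdim_V M^{i'}=\qdim_V M^i$ by Remark \ref{M'}). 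Then $(v,v)^T$ is a non-negative eigenvector of $B$ with positive eigenvalue $\qdim_V M^i$, and by the non-negative version of Perron--Frobenius (Remark \ref{rem PF}) this eigenvalue must be $\rho(B)$. I would present this second route, as it reuses exactly the machinery already set up in Remark \ref{Eigenvector} and avoids any separate discussion of singular values.
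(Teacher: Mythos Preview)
Your proposal is correct, and the second route you ultimately choose---building the positive eigenvector $(v,v)^T$ of the block matrix from the common Perron eigenvector $v$ of $N(i)$ and $N(i)^T=N(i')$, then invoking Remark \ref{rem PF}---is exactly the argument the paper gives. Your first route via singular values and normality also works, but the paper bypasses it just as you recommend.
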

\begin{proof}
 It is clear that $\bar{N}(i)=
\left(
  \begin{array}{cc}
    0 \ & N(i)\\
    N(i)^T & 0\\
  \end{array}
\right)$ is a symmetric matrix. And the matrices $N(i)$ and $N(i)^T=N(i')$ have the same spectral radius $\qdim_V M^i$ with the same eigenvector $v$ (Remark \ref{Eigenvector}). Thus the vector $\left(
  \begin{array}{c}
   v\\
v
  \end{array}
\right)$ is an eigenvector of $\bar{N}(i)$ with eigenvalue $\qdim_V M^i.$ Again by Remark \ref{rem PF},  $\rho(\bar{N}(i))=\qdim_VM^i.$
\end{proof}

 In graph theory, the adjacency matrix of a finite graph $G$ on $n$ vertices is the $n\times n$ matrix where the non-diagonal entry $a_{ij}$ is the number of edges from vertex $i$ to vertex $j$, and the diagonal entry $a_{ii}$  is the number of loops from vertex $i$ to itself. Thus any symmetric matrix with all entries non-negative integers is the adjacency matrix of a certain finite graph.

\begin{de}
Let $G$ be a graph and $M$ be its adjacency matrix. The spectral norm of $G$ is defined as the spectral radius of its adjacency matrix $M,$ which is denoted by $\parallel G\parallel.$
\end{de}

\begin{rem}
If $G_i,$ $i=1,\cdots, k,$ are all the connected subgraphs of $G,$ then
\begin{equation*}
\max_i\parallel G_i\parallel= \parallel G\parallel.
\end{equation*}
 \end{rem}

\begin{thm}\label{possible value}
Let $V$ be a simple \voa  and $\{M^0, \cdots,\ M^d\}$ the  inequivalent irreducible $V$-modules as in Lemma \ref{lem qdim}. Then for any $0\leq i\leq d,$ $$\qdim_VM^i\in \{2\cos(\pi/n)| n\geq 3\}\cup \{a|2\leq a< \infty, \hbox{a is algebraic}\}.$$ 
\end{thm}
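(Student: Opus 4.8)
The plan is to combine Remark \ref{Eigenvector}, which identifies $\qdim_V M^i$ with the spectral radius $\rho(N(i))$, with Lemma \ref{norm}, which replaces $N(i)$ by the symmetric nonnegative-integer matrix $\bar N(i)=\left(\begin{smallmatrix}0 & N(i)\\ N(i)^T & 0\end{smallmatrix}\right)$ having the same spectral radius. Since every symmetric matrix with nonnegative integer entries is the adjacency matrix of a finite graph $G_i$ (allowing loops and multiple edges), we get $\qdim_V M^i=\|G_i\|$, the spectral norm of a finite graph. The whole question is thereby reduced to a purely graph-theoretic fact about which real numbers arise as spectral norms of finite graphs.

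The key input is the classical Kronecker/Smith classification (the ``$\mathbf{ADE}$'' theorem underlying Jones' index theorem): a connected finite graph $G$ with $\|G\|<2$ must be one of the simply-laced Dynkin diagrams $A_n, D_n, E_6, E_7, E_8$, and the possible values of $\|G\|$ in that range are exactly $\{2\cos(\pi/n)\mid n\geq 3\}$ (with $\|G\|=1$ corresponding to $A_1$, i.e. $n=3$ giving $2\cos(\pi/3)=1$, and so on up to $2\cos(\pi/30)$ for $E_8$). First I would invoke the remark that $\|G\|=\max_i\|G_i\|$ over connected components, so it suffices to analyze a connected graph. Then I split into two cases: if $\|G_i\|<2$, the Kronecker--Smith classification forces $\qdim_V M^i\in\{2\cos(\pi/n)\mid n\geq 3\}$; if $\|G_i\|\geq 2$, then automatically $\qdim_V M^i\in[2,\infty)$, and moreover it is algebraic because it is an eigenvalue of the integer matrix $\bar N(i)$, hence a root of that matrix's characteristic polynomial, which has integer coefficients. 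Combining the two cases yields the stated set.

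I would also note the edge case that $\qdim_V M^i$ could a priori be $0$, but Lemma \ref{lem qdim} (or Remark \ref{rem positive dim} together with Theorem \ref{Verlinde}) guarantees $\qdim_V M^i>0$; and the value $1$ is permitted since $1=2\cos(\pi/3)$, consistent with the simple-current case (Proposition \ref{simple current}). One should check there is no value strictly between $1$ and $2$ other than the $2\cos(\pi/n)$; this is again exactly the content of the Kronecker--Smith classification, so nothing extra is needed.

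The main obstacle is not in the VOA side at all — that part is a clean translation via Remark \ref{Eigenvector} and Lemma \ref{norm} — but in correctly citing and applying the graph-spectral classification: one must be careful that the relevant theorem allows multigraphs and loops (the fusion coefficients $N_{i,j}^k$ can exceed $1$ and $N_{i,i}^i$ can be nonzero), and that the list of achievable norms below $2$ is precisely $\{2\cos(\pi/n):n\ge 3\}$ rather than some larger set. The honest statement of the required lemma is: if $A$ is a symmetric matrix with entries in $\Z_{\ge 0}$ and $\rho(A)<2$, then $\rho(A)=2\cos(\pi/n)$ for some integer $n\ge 3$. With that lemma in hand (it is standard, going back to work of Kronecker and appearing in Jones' subfactor paper and in the theory of Coxeter diagrams), the theorem follows immediately from the two-case analysis above.
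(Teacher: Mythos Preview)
Your proposal is correct and follows essentially the same route as the paper: identify $\qdim_V M^i$ with the spectral radius of $N(i)$ via Remark \ref{Eigenvector}, pass to the symmetric nonnegative-integer matrix $\bar N(i)$ via Lemma \ref{norm}, interpret it as the adjacency matrix of a graph, and then invoke the Smith/ADE classification (the paper cites \cite{S}, \cite{LS}) for the case $\|G_i\|<2$ while noting algebraicity for $\|G_i\|\geq 2$. Your additional remarks on multigraphs, loops, and edge cases are careful but do not alter the argument---indeed, any loop or multiple edge would already force $\rho\geq 2$, so the $<2$ case automatically concerns simple graphs.
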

\begin{proof}
Since $\qdim_VM^i$ is the eigenvalue of the fusion matrix $N(i)$ whose entries are all nonnegative integers, it is an algebraic number.

 By Lemma \ref{norm}, we know that the matrix
 $$       \bar{N}(i)=
\left(
  \begin{array}{cc}
    0 \ & N(i)\\
    N(i)^T & 0\\
  \end{array}
\right)$$
is symmetric with all entries nonnegative integers. Thus $\bar{N}(i))$ is the adjacency matrix of a certain graph, say, $G_{i}$, whose norm is actually the quantum dimension of $M^i.$

 If $\parallel G_i\parallel\geq 2,$ we are done. If $\parallel G_i\parallel<2,$ it follows from \cite{S} and \cite{LS} that each connected subgraph of $G_i$ is of $ADE$ type.  And the spectral norms of $ADE$ type graphs  are of the form $2\cos(\pi/n)$ with  $n\geq 3.$
\end{proof}

\begin{rem} The possible values for the index of subfactors given in \cite{J} are
$$\{4\cos^2(\pi/n)|n\geq 3\}\cup [4,\infty).$$
So the possible values for quantum dimensions given in the previous theorem are exactly the square root of index of subfactors but restricted to algebraic numbers.
\end{rem}

\begin{rem}
The possible values that are less than 2 can be realized. Take $\frak{g}=sl_2(\mathbb{C}),$ and let $\l $ be the fundamental weight. Consider different levels $k\in\Z_+.$
 By Example \ref{ex affine}, one easily sees that
$$\qdim_{L_{\frak{g}}(k,0)}L_{\frak{g}}(k,\l)=2\cos (\frac{\pi}{k+2}), ~ k\geq 1.$$
But we do not know which number greater than or equal to 2 can be realized as a quantum dimension. According to \cite{ENO}, the quantum dimensions for rational and $C_2$-cofinite \voas \ are cyclotomic integers.
\end{rem}

\section{Quantum Galois Theory}

In this section, we study the Galois theory for vertex operator algebras. In the classical Galois theory, we need the degree $[E:F]$ of a field  $E$ over a subfield $F,$ which is defined as $\dim_FE.$  To have a Galois theory for \voas, we also need to define the degree of a vertex operator algebra $V$ over a vertex operator subalgebra $U.$  We define the degree $[V:U]$ to be the quantum dimension $\qdim_U V.$ In fact, exhibiting
a quantum Galois theory is one of the main motivations for us to study the quantum dimensions.

We need the following theorems for further discussions.
\begin{thm}\label{thm compact}
Let $V$ be a simple vertex operator algebra, $G$ be a compact subgroup of $\Aut(V)$, and $G$ acts continuously on $V$. Then as a $(G, V^G)$-module, $V=\oplus_{\chi \in \Irr(G)} (W_\chi \otimes V_ \chi)$,  where

(1) $V_\chi\neq 0$, $\forall \chi \in \Irr(G)$,

(2) $V_\chi$ is an irreducible $V^G$-module, $\forall \chi \in \Irr(G)$,

(3) $V_ \chi\cong V_\lambda$ as $V^G$-modules if and only if $\chi=\lambda$.
\end{thm}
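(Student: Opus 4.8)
The plan is to follow the strategy of the finite-group quantum Galois theory of Dong--Mason \cite{DM1}, adapted to compact $G$ via the Peter--Weyl theorem. First I would set up the decomposition. Since $G$ fixes the conformal vector it commutes with $L(0)$, hence preserves each finite-dimensional homogeneous subspace $V_n$; as $G$ is compact and acts continuously, each $V_n$ splits into $G$-isotypic components, and summing over $n$ gives $V=\bigoplus_{\chi\in\Irr(G)}V[\chi]$. Fixing an identification $V[\chi]\cong W_\chi\otimes V_\chi$ with $V_\chi:=\Hom_{\C[G]}(W_\chi,V)$ the multiplicity space, one checks that $V_\chi$ is a $V^G$-module: for $a\in V^G$ every mode $a_m$ commutes with the $G$-action on $V$ (because $ga=a$ forces $ga_mg^{-1}=(ga)_m=a_m$), so $a_m$ preserves $\Hom_{\C[G]}(W_\chi,V)$, and the vertex-operator axioms for $V_\chi$ are inherited from those of $V$. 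Under the identification, $V^G$ acts on $V[\chi]$ by $1_{W_\chi}\otimes(\text{action on }V_\chi)$.

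The core of the argument is the structure of $V$ as a $V^G$-module. The key observation is that each $G$-isotypic projector $P_\chi$, being an integral in the $G$-action, commutes with every mode $a_m$, $a\in V^G$, so $P_\chi\in\End_{V^G}(V)$; consequently every $V^G$-submodule $N$ of $V$ is $G$-graded, $N=\bigoplus_\chi(N\cap V[\chi])$. Together with the fact that $V^G$ acts on $V[\chi]=W_\chi\otimes V_\chi$ only through $V_\chi$, this reduces assertions (2) and (3) to the double-centralizer statement $\End_{V^G}(V)=\overline{\C[G]}$, the image of the group algebra in $\End(V)$ (a restricted direct sum of full matrix algebras, hence semisimple). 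The inclusion $\overline{\C[G]}\subseteq\End_{V^G}(V)$ is immediate; for the reverse I would take $\phi\in\End_{V^G}(V)$, note that $\phi$ commutes with $L(0)$ (as $\omega\in V^G$) and so preserves each $V_n$, and then invoke the simplicity of $V$ --- concretely, that the $V$-submodule generated by any nonzero vector is all of $V$ --- to force $\phi$, block by block and compatibly across degrees, to be implemented by an element of $\C[G]$. Given this, complete reducibility of $V$ over $V^G$ and then (2) and (3) follow from the classical double-commutant theorem applied to the mutually centralizing actions of the $V^G$-mode algebra and $\overline{\C[G]}$.

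It remains to show $V_\chi\neq 0$ for every $\chi$, which I expect to be the main obstacle. For the trivial character $V_{\chi_0}=V^G\ni\1\neq 0$. For general $\chi$, start from $0\neq u$ in some occurring $V[\lambda]$: the ideal of $V$ generated by $u$ is $V$, so each $V[\mu]$ is spanned by vectors $a^{(1)}_{m_1}\cdots a^{(r)}_{m_r}u$ with the $a^{(i)}$ $G$-homogeneous of occurring types $\psi_i$; hence $W_\mu$ embeds in $W_{\psi_1}\otimes\cdots\otimes W_{\psi_r}\otimes W_\lambda$, so the set of occurring characters tensor-generates the representation category of $G$ and its members have trivial common kernel (that kernel is a normal subgroup acting trivially on $V$, hence trivial by faithfulness). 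Upgrading this to the sharp statement that \emph{every} $\chi$ actually occurs is the delicate point, and it is here that the simplicity of $V$ must be used most carefully; for finite $G$ this is exactly \cite{DM1}, and the compact case follows along the same lines using the Peter--Weyl reduction above. Once $V_\chi\neq 0$ for all $\chi$, the decomposition $V=\bigoplus_{\chi\in\Irr(G)}(W_\chi\otimes V_\chi)$ together with (1), (2), (3) is established.
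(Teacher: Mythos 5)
A preliminary remark on the comparison you asked for: the paper does not prove this theorem at all; it is quoted from \cite{DLM} (with the solvable case credited to \cite{DM1} and the finite Galois correspondence to \cite{HMT}), so your argument has to stand on its own rather than be measured against an in-paper proof. Your skeleton is indeed the strategy of the cited papers: degreewise isotypic decomposition of $V$ using compactness and continuity, multiplicity spaces $V_\chi=\Hom_{\C[G]}(W_\chi,V)$ made into $V^G$-modules because modes of $G$-fixed vectors commute with $G$, and reduction of (2) and (3) to a double-commutant statement. The trouble is that the two places you wave at are exactly where the content of the theorem lies. The equality $\End_{V^G}(V)=$ image of $\C[G]$ is the Schur--Weyl-type duality the theorem asserts; saying that simplicity of $V$ ``forces $\phi$, block by block and compatibly across degrees, to be implemented by an element of $\C[G]$'' is not an argument. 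Simplicity of $V$ is a statement about $V$-submodules and gives no direct control over $V^G$-endomorphisms; the actual proofs need a density theorem (Jacobson-type density for the algebra of modes of $V^G$ acting on each isotypic piece), whose proof uses genuinely vertex-operator-theoretic input such as $Y(u,z)v\neq 0$ for nonzero $u,v$ in a simple VOA (\cite{DL}) together with an averaging/projection argument that replaces modes of $V$ by modes of $V^G$. Moreover, even granting the commutant computation, your appeal to ``complete reducibility of $V$ over $V^G$'' is circular: that complete reducibility is itself a consequence of the duality theorem, not an available hypothesis, so the classical double-commutant theorem cannot simply be invoked.

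The nonvanishing statement (1) has the same problem. Your tensor-generation argument shows only that the occurring characters have trivial common kernel and that each occurring $W_\mu$ embeds in some tensor product $W_{\psi_1}\otimes\cdots\otimes W_{\psi_r}\otimes W_\lambda$ of occurring types; it does not show that the set of occurring characters is closed under taking \emph{all} constituents of such tensor products (the span of the vectors $a^{(1)}_{m_1}\cdots a^{(r)}_{m_r}u$ could a priori miss constituents), and trivial common kernel alone does not force every $\chi\in\Irr(G)$ to occur, even for finite $G$. Your fallback, ``for finite $G$ this is exactly \cite{DM1}, and the compact case follows along the same lines,'' is also off the mark: \cite{DM1} treats solvable groups, the general finite case is \cite{HMT}, and the compact case is precisely the theorem being proved, namely \cite{DLM}. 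So at the two crucial points --- the density/duality statement and the occurrence of every irreducible character --- the proposal either gestures at simplicity without a mechanism or defers to the very result it is meant to establish.
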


\begin{thm}\label{tg2}
Let $V$ be as in the previous theorem, and $G$ be a finite subgroup of $\Aut(V)$, then the map $H\to V^H$ gives a bijection between subgroups of $G$ and subVOAs of V containing $V^G$.
\end{thm}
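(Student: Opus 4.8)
The plan is to establish the bijection $H \mapsto V^H$ between subgroups of the finite group $G$ and vertex operator subalgebras of $V$ containing $V^G$ by proving two things: that the map is injective, and that it is surjective. Injectivity is the classical-looking half: if $H_1 \neq H_2$ are subgroups, I want to conclude $V^{H_1} \neq V^{H_2}$. Using Theorem~\ref{thm compact} applied to the finite (hence compact) group $G$, the decomposition $V = \bigoplus_{\chi \in \Irr(G)}(W_\chi \otimes V_\chi)$ together with part (1) ($V_\chi \neq 0$ for all $\chi$) lets me recover, for any subgroup $H$, the space $V^H = \bigoplus_{\chi}(W_\chi^H \otimes V_\chi)$. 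Since the $V_\chi$ are nonzero, $V^H$ ``sees'' exactly the set of $\chi$ with $W_\chi^H \neq 0$, and because the $V_\chi$ are pairwise non-isomorphic irreducible $V^G$-modules (parts (2) and (3)), this set is an invariant of the subalgebra $V^H$. Standard finite-group representation theory then gives that $\{\chi : W_\chi^H \neq 0\}$ determines $H$: if $H_1 \subsetneq H_2$, pick $\chi$ appearing in $\C[G/H_1]$ but not $\C[G/H_2]$ (equivalently use that $H = \bigcap_{\chi : W_\chi^H \neq 0} \ker(\text{action on } W_\chi)$ or a direct counting/character argument), so $V^{H_1} \supsetneq V^{H_2}$; in particular $V^{H_1} \neq V^{H_2}$.

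The surjectivity half is where the quantum dimensions enter, and this is the part I expect to be the real obstacle. Given a vertex operator subalgebra $U$ with $V^G \subseteq U \subseteq V$, I must produce a subgroup $H \leq G$ with $V^H = U$. The natural candidate is $H = \{g \in G : g|_U = \mathrm{id}_U\}$, the pointwise stabilizer of $U$; clearly $U \subseteq V^H$, and I need the reverse inclusion. Decompose $U$ as a $V^G$-submodule of $V$: by Theorem~\ref{thm compact} and semisimplicity of the relevant module category, $U = \bigoplus_{\chi \in T}(U_\chi \otimes V_\chi)$ for some subset $T \subseteq \Irr(G)$ and subspaces $U_\chi \subseteq W_\chi$, where multiplicity-freeness of the $V_\chi$ forces each isotypic piece of $U$ to be a full $\C$-subspace $U_\chi \subseteq W_\chi$ tensored with $V_\chi$. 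The key point will be that $U$ being closed under the vertex operator product forces the collection $\{U_\chi\}$ to be closed under the tensor product of $G$-representations (the $W_\chi$ carry the ``group side'' of the product structure), which by the theory of fusion/tensor subcategories of $\mathrm{Rep}(G)$ means $\{W_\chi : U_\chi = W_\chi, \chi \in T\}$ is exactly $\mathrm{Rep}(G/N)$ for a normal subgroup $N$, and moreover $U_\chi = W_\chi$ for all $\chi \in T$ (i.e. isotypic components come in whole). Then $U = V^N$ and we are done with $H = N$.

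To make the ``whole isotypic components'' step rigorous I intend to use a dimension count via quantum dimensions: this is precisely the new tool the paper introduces. Setting $[V:U] := \qdim_U V$ and using multiplicativity of quantum dimensions under the relevant module operations (Proposition~\ref{prop tensor} and its extensions), together with the additivity $\qdim_V(\oplus M^i) = \sum \qdim_V M^i$, one gets the tower law $[V : V^G] = [V : U]\,[U : V^G]$, and the target identity $[V : V^G] = o(G)$ (which should follow by computing $\qdim_{V^G} V = \sum_\chi (\dim W_\chi)(\qdim_{V^G} V_\chi)$ and knowing, from the simple-current/quantum-dimension analysis, that each $V_\chi$ has quantum dimension $\dim W_\chi$, so the sum is $\sum_\chi (\dim W_\chi)^2 = o(G)$). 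Given $U = \bigoplus_{\chi \in T}(U_\chi \otimes V_\chi)$ one computes $[V:U]$ in two ways and forces $\dim U_\chi \in \{0, \dim W_\chi\}$ and that the surviving $\chi$'s form $\widehat{G/N}$, so $\dim_{V^G}$-type bookkeeping pins down $U = V^N$. The hard part will be justifying that the VOA-algebra structure on $U$ translates into a \emph{tensor-subcategory} condition on $\{W_\chi : \chi \in T\}$ — i.e. controlling how the vertex operator product interacts with the $G$-isotypic grading — and then invoking the correspondence between tensor subcategories of $\mathrm{Rep}(G)$ and normal subgroups of $G$; once that structural statement is in hand, the quantum-dimension count finishes the proof cleanly.
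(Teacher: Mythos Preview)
The paper does not prove this theorem; it quotes it from \cite{DM1} and \cite{HMT} (see the sentence immediately following the statement). So there is no ``paper's own proof'' to compare against, and your attempt is an independent proof, which is fine in principle --- but it has two genuine problems.

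First, the quantum-dimension machinery you invoke (Proposition~\ref{prop tensor}, the identity $[V:V^G]=o(G)$ from Theorem~\ref{thm chi(1)}, multiplicativity, the tower law) all require $V$ to be rational and $C_2$-cofinite, together with positivity conditions on conformal weights. Theorem~\ref{tg2} assumes only that $V$ is simple and $G$ finite. Under those bare hypotheses quantum dimensions need not exist, so the dimension-counting engine you rely on is unavailable.

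Second, and more seriously, your surjectivity argument is built on a false expectation. You aim to show that for any sub-VOA $U$ with $V^G\subseteq U\subseteq V$, the $V^G$-decomposition $U=\bigoplus_\chi (U_\chi\otimes V_\chi)$ satisfies $\dim U_\chi\in\{0,\dim W_\chi\}$, i.e.\ isotypic components occur whole, and that the surviving $\chi$'s form $\mathrm{Rep}(G/N)$ for a normal subgroup $N$. But this is already wrong for $U=V^H$ with $H$ a \emph{non-normal} subgroup: there one has $U_\chi=W_\chi^H$, which is typically a proper nonzero subspace of $W_\chi$. So the conclusion ``$U=V^N$ for some normal $N$'' cannot be what happens in general, and your proposed dimension count, if it went through, would contradict the very statement you are trying to prove (which asserts a bijection with \emph{all} subgroups, not just normal ones). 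The correspondence between tensor subcategories of $\mathrm{Rep}(G)$ and normal subgroups is correct, but a sub-VOA $U$ does not give a tensor subcategory in that sense; rather, the correct structure (as exploited in \cite{HMT} and echoed in the proof of Theorem~\ref{galois 1} here) is that the collection $\{U_\chi\}_\chi$ forms a subalgebra of $\bigoplus_\chi \End(W_\chi)\cong\C[G]$ closed under the coproduct, and such coideal subalgebras correspond to arbitrary subgroups $H$ via $U_\chi=W_\chi^H$. Your injectivity sketch is fine; it is the surjectivity half that needs to be rethought along these Hopf-algebra lines rather than via quantum dimensions.
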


Theorem \ref{thm compact} is established in \cite{DLM}. If $G$ is solvable, this result has been obtained previously in \cite{DM1}.  Theorem \ref{tg2} is given in \cite{DM1} and \cite{HMT}.

It is well known in the classical Galois theory that if $G$ is a finite automorphism group of field $E$ then $[E:E^G]=o(G).$ The next result is a vertex operator algebra analogue of this result.
\begin{thm}\label{thm chi(1)}
Let $V$ be a rational and $C_2$-cofinite simple vertex operator algebra.
Also assume that $V$ is $g$-rational and the conformal weight of any irreducible $g$-twisted $V$-module is positive except for $V$ itself for all $g\in G.$
Then  $[V:V^G]$ exists and equals to $o(G).$
\end{thm}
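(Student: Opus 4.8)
The plan is to compute $[V:V^G]=\qdim_{V^G}V$ directly from the definition, using the $S$-transformation of trace functions together with the hypothesis on conformal weights. First I would rewrite the denominator via the orbifold projection onto fixed points: since $\dim(V^G)_n=\dim(V_n)^G=\frac1{o(G)}\sum_{g\in G}\tr_{V_n}(g)$, one gets
\[
Z_{V^G}(\tau)=\ch_q V^G=\frac1{o(G)}\sum_{g\in G}\tr_V\!\bigl(g\,q^{L(0)-c/24}\bigr)=\frac1{o(G)}\sum_{g\in G}Z_V(1,g,\tau),
\]
where on the right $V$ is regarded as an ($g$-stable) ordinary, i.e. $1$-twisted, $V$-module with $\varphi(g)=g$, so that $T_V(\1,1,g,\tau)=Z_V(1,g,\tau)$. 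Each of these series converges for $|q|<1$ by comparison with $\ch_q V$ (which converges since $V$ is $C_2$-cofinite), so $Z_{V^G}$ exists as a holomorphic function on $|q|<1$ and the quotient $Z_V/Z_{V^G}$ is defined along $\tau=iy$, $y>0$; in particular the existence of the limit is not an extra assumption.

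Next I would apply Theorem~\ref{minvariance} to the commuting pair $(1,g)$ with $\gamma=S$ as in Remark~\ref{trans}. Since $V$ is $g$-rational and $C_2$-cofinite, and $(1,g)S=(g,1)$, this yields $Z_V(1,g,-1/\tau)=\sum_{N\in\mathscr{M}(g)}\gamma_{V,N}(1,g)\,\ch_q N$, a finite sum by $g$-rationality, using $T_N(\1,g,1,\tau)=\ch_q N$. For $g=1$ the coefficients are $\gamma_{V,M^j}(1,1)=S_{0,j}$ by Remark~\ref{trans}, and $S_{0,0}\neq0$ by Theorem~\ref{Verlinde}(4). Writing $\tau=iy$ and $\tilde q=e^{2\pi i(-1/\tau)}=e^{-2\pi/y}\to0^+$ as $y\to0^+$, I would read off the leading asymptotics. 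Each $\ch_{\tilde q}N$ is asymptotic to a nonzero constant times $\tilde q^{\lambda_N-c/24}$, where $\lambda_N$ is the conformal weight of $N$. The conformal weight of $M^0=V$ is $0$, while by hypothesis $\lambda_N>0$ for every irreducible $g$-twisted $V$-module $N$ with $(g,N)\neq(1,V)$. Hence $Z_V(iy)=\sum_j S_{0,j}\ch_{\tilde q}M^j\sim S_{0,0}\,\tilde q^{-c/24}$, and each $g\neq1$ summand contributes $O(\tilde q^{\mu_g-c/24})$ with $\mu_g=\min_{N\in\mathscr{M}(g)}\lambda_N>0$, i.e. $o(\tilde q^{-c/24})$; since the $g=1$ summand of $\sum_{g}Z_V(1,g,iy)$ equals $Z_V(iy)$, we get $Z_{V^G}(iy)\sim\frac1{o(G)}S_{0,0}\,\tilde q^{-c/24}$. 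Dividing, the constant $S_{0,0}$ cancels and $Z_V(iy)/Z_{V^G}(iy)\to o(G)$, so $[V:V^G]=\qdim_{V^G}V$ exists and equals $o(G)$.

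I would also record the natural refinement. Using the decomposition $V=\bigoplus_{\chi\in\Irr(G)}W_\chi\otimes V_\chi$ from Theorem~\ref{thm compact} (with $\dim W_\chi=\chi(1)$ and the $V_\chi$ distinct irreducible $V^G$-modules) together with the isotypic projection $\ch_q V_\chi=\frac1{o(G)}\sum_{g}\overline{\chi(g)}\,\tr_V(g\,q^{L(0)-c/24})$, the identical asymptotic analysis (the $g=1$ term still dominates, with coefficient $\overline{\chi(1)}S_{0,0}=\chi(1)S_{0,0}$) gives $\qdim_{V^G}V_\chi=\chi(1)$ for every $\chi\in\Irr(G)$. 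Then additivity of quantum dimension over the direct sum $V=\bigoplus_\chi V_\chi^{\oplus\chi(1)}$ recovers $[V:V^G]=\sum_{\chi}\chi(1)^2=o(G)$; this is the form of the statement that most directly mirrors the classical identity $[E:E^G]=\sum_{\chi}\chi(1)^2$ for fields.

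The step I expect to be the main obstacle is the asymptotic analysis in the second paragraph: everything there rests on the hypothesis that every nontrivial ($g\neq1$) twisted sector, and every non-vacuum ordinary module, has strictly positive conformal weight. This is precisely what singles out the $(g,N)=(1,V)$ contribution as strictly dominant in the $\tilde q\to0^+$ expansion and makes the a priori sign-indefinite constant $S_{0,0}$ cancel between numerator and denominator; without it the limit need not even equal a constant. Under the weaker hypothesis of Remark~\ref{nonunitary} one argues identically after replacing the vacuum contribution by the unique summand of minimal conformal weight in the untwisted sector. The remaining ingredients — the fixed-point and isotypic projection formulas, the computation $(1,g)S=(g,1)$, the identification $T_N(\1,g,1,\tau)=\ch_q N$, convergence of all characters, and additivity of $\qdim$ — are routine.
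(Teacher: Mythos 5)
Your proposal is correct and follows essentially the same route as the paper: express the relevant characters as averages of the twisted trace functions $Z_V(1,g,\tau)$, apply the $S$-transformation from Theorem~\ref{minvariance} with $(1,g)S=(g,1)$, and use the positivity hypothesis on twisted conformal weights so that only the vacuum term survives as $\tilde q\to 0^+$, with $S_{0,0}$ cancelling. The only (cosmetic) difference is that you first compute $\qdim_{V^G}V$ directly and then record $\qdim_{V^G}V_\chi=\chi(1)$ as a refinement, whereas the paper proves $\qdim_{V^G}V_\chi=\chi(1)$ first and sums $\sum_\chi\chi(1)^2=o(G)$; your third paragraph reproduces exactly that argument.
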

\begin{proof}
It is well known that $o(G)=\sum_ {\chi \in \Irr(G)} (\dim W_\chi)^2.$ By Theorem \ref{thm compact}, if $\qdim_{V^G}V_{\chi}$ exists for all $\chi\in \Irr(G)$ then $[V:V^G]$  exists and is equal to
$\sum_{\chi\in \Irr(G)}\dim W_\chi \cdot \qdim_{V^G}V_\chi.$ The theorem holds if $\qdim_{V^G}V_{\chi}={\chi(1)}=\dim W_{\chi}.$

By the orthogonality of characters of representations of a finite group, we notice that $\ch_q(V_\chi)=\frac{1}{o(G)}\sum_{g\in G}Z_V(1, \ g,\ q)\overline{\chi (g)}.$ By Theorem \ref{minvariance} we have
\begin{equation*}
\begin{split}
\qdim_{V^G}V_{\chi}=&\lim_{q\to 1^-}\frac{\sum_{g\in G}Z_V(1, \ g,\ q)\overline{\chi (g)}}
{\sum_{g\in G}Z_V(1, \ g,\ q)}\\
=&\lim_{\tau \to i\infty}\frac {\sum_{g\in G}{Z_V(1,g,\ -\frac {1}{\tau})\overline{\chi(g)}} }
{\sum_{g\in G}Z_V(1,\ g, \ -\frac{1}{\tau})}\\
=&\lim_{\tau \to i\infty}\frac{\sum_{g\in G,N_i\in \mathscr{M}(g)}S_{0,i}(1,g)Z_{N_i}(g,~1,\ \tau)\overline{\chi(g)}}
{\sum_{g\in G,N_i\in \mathscr{M}(g)}S_{0,i}(1,g)Z_{N_i}(g,~1,\ \tau)}\\
=&\lim_{q \to 0^+}\frac{\sum_{g\in G,N_i\in \mathscr{M}(g)}S_{0,i}(1,g)Z_{N_i}(g,~ 1,\ q)\overline{\chi(g)}}
{\sum_{g\in G,N_i\in \mathscr{M}(g)}S_{0,i}(1,g)Z_{N_i}(g,~1,\ q)}.
\end{split}
\end{equation*}
By the assumption, we know that
$$\lim_{q\to 0^+} q^{c/24}Z_{V}(1,~1,q)\neq 0$$
and
$$\lim_{q\to 0^+} q^{c/24}Z_{N_i}(g,~1,q)=0$$
 for any other $N_i.$  This implies that
\begin{equation*}
\qdim_{V^G}V_{\chi}={\chi(1)}=\dim W_{\chi},
\end{equation*}
and the theorem follows.
\end{proof}

\begin{rem}
In Theorem \ref{thm chi(1)}, under certain conditions, we proved for a finite group $G< \Aut(V)$,  $\qdim_{V^G}V_{\chi}=\chi(1).$ It seems that this result is still true for a compact group $G<\Aut(V)$ as assumed in \cite{DLM}. Here are some examples.
\end{rem}

\begin{ex}{\rm
Let $V_L$ be the lattice VOA associated to a positive definite even lattice $L$ of rank $d$ with a nondegenerate bilinear form $(~,~).$
Set $\mathfrak{h}=L\otimes_{\mathbb{Z}}\mathbb{R}$ and  and let $M(1)$ be the rank $d$ Heisenberg \voa associated to $\mathfrak{h}.$  Then $V_L\cong M(1)\bigotimes (\oplus_{\alpha \in L} \mathbb{ C}e^{\alpha})$ as linear spaces. Recall that $L^{\circ}$ is the dual lattice of $L.$ Then $L\otimes_{\mathbb{Z}}\mathbb{R}/L^\circ \cong T^n$ is a compact Lie group acting continuously on $V_L$
 in the following way:  for any $\beta \in \mathfrak{h},$
\begin{equation*}
\begin{split}
e^{2\pi i\beta(0)}:~~  V_L ~&\to ~V_L\\
 a\otimes e^{\alpha}&\mapsto e^{2\pi i(\beta, \alpha)} a\otimes e^{\alpha}.
\end{split}
\end{equation*}
Since $\beta(0)$ is a derivation of $V_L,$  $e^{2\pi i\beta(0)}$ is an automorphism of $V_L$ such that
$e^{2\pi i\beta(0)}=1$ if $\beta\in L^{\circ}.$ As a result the torus $T^n$ is a compact subgroup of  $\Aut(V_L).$  It is easy to see that $(V_L)^{T^n}=M(1).$  By Theorem \ref{thm compact} we have a decomposition of $V_L$:
\begin{equation*}
V_L=\bigoplus_{\alpha \in L} M(1,\alpha)\otimes \mathbb{C}e^\alpha,
\end{equation*}
where $M(1,\alpha)$ is an $M(1)$-module with weight $\alpha ,$ and $\mathbb{C}e^\alpha$ is an irreducible $T^n$-module. We have already known from Example \ref{hei} that $\qdim_{M(1)} M(1,\alpha)=1.$ That is,
$\qdim_{M(1)} M(1,\alpha)=\dim \mathbb{C}e^\alpha.$}
\end{ex}

\begin{ex}{\rm
Let $V=V_L$ be the lattice \voa associated with the root lattice $L=\Z\alpha$ of type $A_1$ where
  $(\alpha,\alpha)=2.$ It is well known that $SO(3)$ is a subgroup of $\Aut(V)$ and we have the following  decomposition \cite{DG}:
\begin{equation*}
V=\bigoplus _{m\geq 0} W_{2m}\otimes L(1, m^2),
\end{equation*}
where $L(1,m^2)$ is the highest weight module for the Virasoro VOA $L(1,0)$ with highest weight $m^2$, and $W_{2m}$ is the irreducible $2m+1$ dimensional highest weight module for $SO(3)$ with highest weight $m$. In particular, $V^{SO(3)}=L(1,0).$ By Example \ref{L(1,0)}, one gets
\begin{equation*}
\dim W_{2m}=\qdim_{L(1,0)} L(1,m^2)=2m+1.
\end{equation*}}
\end{ex}

Motivated by these two  examples, we make the following conjecture:

\begin{conj}  Let $V = (V, Y, 1, \omega)$ be a rational and $C_2$-cofinite simple vertex operator algebra, and $G$ be a subgroup of $\Aut (V).$ Assume that $G$ is a finite-dimensional compact Lie group which acts on $V$ continuously.  Then the decomposition
\begin{equation*}
V=\sum_{\chi \in \Irr(G)}W_{\chi}\otimes V_{\chi}
\end{equation*}
has the following property:
\begin{equation*}
\dim W_{\chi}=\qdim_{V^G}V_{\chi}.
\end{equation*}
\end{conj}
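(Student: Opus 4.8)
The plan is to run the argument of Theorem \ref{thm chi(1)} with the average $\frac1{o(G)}\sum_{g\in G}$ replaced by integration $\int_G(\cdot)\,dg$ against normalized Haar measure. Since $G$ acts continuously and commutes with $L(0)$, each $g$ preserves $V_n$ and (averaging an inner product) may be taken unitary there, so $|\tr_{V_n}(g)|\le\dim V_n$ and for $|q|<1$ the series $\tr_V(g\,q^{L(0)-c/24})=\sum_{\psi\in\Irr(G)}\chi_\psi(g)\,\ch_q V_\psi$ converges absolutely, uniformly in $g$ (dominated by $\sum_n(\dim V_n)|q|^n$). Integrating term by term and using orthogonality of characters,
\begin{equation*}
\ch_q V_\chi=\int_G Z_V(\mathbf 1,1,g,q)\,\overline{\chi(g)}\,dg,\qquad \ch_q V^G=\int_G Z_V(\mathbf 1,1,g,q)\,dg,
\end{equation*}
where $Z_V(\mathbf 1,1,g,q)=\tr_V\!\big(g\,q^{L(0)-c/24}\big)$. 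Hence $\qdim_{V^G}V_\chi=\lim_{y\to0^+}\big(\int_G Z_V(\mathbf 1,1,g,iy)\overline{\chi(g)}\,dg\big)\big/\big(\int_G Z_V(\mathbf 1,1,g,iy)\,dg\big)$, and since $\overline{\chi(1)}=\chi(1)=\dim W_\chi$ it is enough to show that the signed measures $Z_V(\mathbf 1,1,g,iy)\,dg\,/\,\ch_{e^{-2\pi y}}V^G$ on $G$ converge weakly to the Dirac mass at $g=1$ as $y\to0^+$; testing against the continuous function $\overline{\chi(\cdot)}$ then gives the conjecture. Equivalently, via Proposition \ref{equidef1} and $\dim(V^G)(n)=\int_G\tr_{V_n}(g)\,dg$, $\dim V_\chi(n)=\int_G\tr_{V_n}(g)\overline{\chi(g)}\,dg$, it would suffice to prove that in high graded degree $V$ looks, after normalization, like a multiple of the regular representation of $G$, i.e. $\tr_{V_n}(g)/\dim V_n\to0$ for $g\ne1$ with enough uniformity.

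\medskip

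To get the concentration at $g=1$ one must bound $|Z_V(\mathbf 1,1,g,iy)|$ for $g$ away from $1$ against $\ch_{e^{-2\pi y}}V^G$ as $y\to0^+$. Unconditionally $|Z_V(\mathbf 1,1,g,iy)|\le Z_V(\mathbf 1,1,1,iy)=\ch_{e^{-2\pi y}}V\asymp e^{2\pi c/(24y)}$, the last asymptotics coming from the $S$-transformation (\ref{S-tran}) and $\lambda_{\min}(V)=0$ being attained only by $V$ (as in Lemma \ref{lem qdim}). For $g$ of finite order one does much better: assuming, as in Theorem \ref{thm chi(1)}, that $V$ is $g$-rational and every irreducible $g$-twisted module has conformal weight $\ge\Lambda(g)>0$ (a hypothesis I would add to the conjecture, $\Lambda$ being the conjugation-invariant ``twisted minimal weight''), Theorem \ref{minvariance} with $\gamma=S$, $(g,h)=(1,g)$ expresses $Z_V(\mathbf 1,1,g,iy)$ as a finite combination of $Z_{N_j}(\mathbf 1,g,1,i/y)$ over $g$-twisted modules $N_j$, whence $|Z_V(\mathbf 1,1,g,iy)|=O\!\big(e^{2\pi(c/24-\Lambda(g))/y}\big)$. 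Meanwhile a Laplace-method analysis localized near $g=1$ — where $Z_V(\mathbf 1,1,g,iy)$ is $e^{2\pi c/(24y)}$ times something sharply peaked, of transverse width $\sim\sqrt y$ (the Hessian of $\Lambda$ at $1$ being, one hopes, nondegenerate) — should give $\ch_{e^{-2\pi y}}V^G\sim C_0\,y^{(\dim G)/2}e^{2\pi c/(24y)}$ with $C_0>0$, matching the worked cases $V^G=M(1)$ and $V^G=L(1,0)$. Combining, for finite-order $g$ in a fixed closed $K\subseteq G\setminus U$ one gets $|Z_V(\mathbf 1,1,g,iy)|=o\!\big(\ch_{e^{-2\pi y}}V^G\big)$; pushing this to all of $K$ by density of finite-order elements and continuity of $g\mapsto Z_V(\mathbf 1,1,g,iy)$ (with the estimate uniform as $y\to0^+$) yields $\int_{G\setminus U}|Z_V(\mathbf 1,1,g,iy)|\,dg=o\!\big(\ch_{e^{-2\pi y}}V^G\big)$, and the approximate-identity statement follows as in the finite case.

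\medskip

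I expect the hard part to be twofold. First, \emph{no Zhu-type modular invariance is available for $g$ of infinite order}: for $\dim G>0$ the finite-order elements are dense, but $\mathscr M(g)$ and Theorem \ref{minvariance} are only defined when $g$ has finite order, so one must either develop a modular-invariance statement for trace functions twisted by an arbitrary element of a compact group — whose $S$-transform should be an \emph{integral} over a continuous family of ``twisted sectors'', as the explicit theta-function transformation in the lattice case makes plausible — or make the density/continuity passage genuinely uniform in $y\to0^+$, which is delicate because the decay rate of $Z_V(\mathbf 1,1,g,iy)$ degenerates as $g\to1$. Second, \emph{$V^G$ is in general not rational} (already $V^G=M(1)$ is not), so the asymptotics of $\ch_q V^G$ are not governed by a Verlinde-type $S$-matrix and one genuinely needs the Laplace-method estimate above, together with a uniform lower bound $\Lambda(g)\ge\delta>0$ on $G\setminus U$ and the vanishing of $\Lambda$ exactly at $g=1$. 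A reasonable route around these obstacles is to prove the conjecture first for $G$ a torus, where $Z_V(\mathbf 1,1,g,\tau)$ is a Jacobi-type form with explicit $S$-transform and explicit shifted conformal weights (the lattice VOA being the model), and then to deduce the general compact connected case by restricting to a maximal torus and invoking the Weyl integration formula, the finite and disconnected cases being absorbed via Theorem \ref{thm chi(1)}.
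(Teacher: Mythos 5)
You should first note that the paper offers no proof of this statement: it is stated as a conjecture, supported only by the finite-group case (Theorem \ref{thm chi(1)}) and the two worked examples $V_L\supset M(1)$ and $V_L\supset L(1,0)=V_L^{SO(3)}$. So the question is whether your outline actually closes the gap, and it does not. Your first step — Haar-measure orthogonality to write $\ch_q V_\chi=\int_G Z_V({\bf 1},1,g,q)\overline{\chi(g)}\,dg$ and reduce the conjecture to concentration of the measures $Z_V({\bf 1},1,g,iy)\,dg/\ch_{e^{-2\pi y}}V^G$ at $g=1$ — is sound and is the natural continuous analogue of the proof of Theorem \ref{thm chi(1)}. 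But every analytic ingredient you then invoke is asserted rather than proved, and these are exactly the obstructions that leave the statement a conjecture: (i) Theorem \ref{minvariance} applies only to automorphisms of finite order, so for the generic (infinite-order) elements of a positive-dimensional $G$ there is no $\mathscr{M}(g)$, no twisted minimal weight $\Lambda(g)$, and no bound of the form $|Z_V({\bf 1},1,g,iy)|=O(e^{2\pi(c/24-\Lambda(g))/y})$; the proposed rescue ``by density of finite-order elements'' needs an estimate uniform in $y\to 0^+$ whose decay rate degenerates as $g\to 1$, which you acknowledge but do not supply; (ii) the asymptotics $\ch_q V^G\sim C_0\,y^{(\dim G)/2}e^{2\pi c/(24y)}$ rest on a Laplace-method argument around the identity with a ``Hessian of $\Lambda$'' that is neither defined nor shown nondegenerate — and since $V^G$ is typically irrational (e.g.\ $M(1)$, $L(1,0)$), no modular machinery in the paper gives this; (iii) the interchange of $\lim_{y\to 0^+}$ with $\int_G$ requires a dominating bound that again hinges on (i) and (ii).

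In addition, you quietly strengthen the hypotheses by requiring positivity of the conformal weights of all irreducible $g$-twisted modules for all finite-order $g$ (the hypothesis of Theorem \ref{thm chi(1)}), so even a completed version of your argument would prove a modified statement rather than the conjecture as written. What you have is a plausible research program — prove a Jacobi-form-type $S$-transformation for $Z_V({\bf 1},1,g,\tau)$ with $g$ in a torus, then reduce the general compact connected case to a maximal torus via Weyl integration, absorbing finite and disconnected pieces through Theorem \ref{thm chi(1)} — but the two hard analytic steps you yourself flag are precisely the missing content, so this is a strategy outline, not a proof.
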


We now turn our attention to the Galois extensions in the theory of vertex operator algebra. We need some definitions first.

\begin{de} Let $U$ be a vertex operator subalgebra of $V$ with the same Virasoro element. $V$ is called a Galois extension of $U$ if there exists a finite group $G< \Aut(V)$ such that $U=V^G,$ and $\qdim_U V\leq o(G).$
\end{de}

For any VOA extension $V\supset U$ we can define the Galois group $\Gal(V/U)=\{g\in \Aut(V)\left|\ g|_{U}=Id\right \}$ as in the classical field theory. The following two theorems are our main results about Galois extensions.

\begin{thm}\label{galois 1}
Let $V$ be a simple vertex operator algebra, and $G< \Aut(V)$ a finite group. Then $$\Gal(V/V^G)=G.$$
\end{thm}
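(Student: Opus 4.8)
The plan is to prove both inclusions, the inclusion $G\subseteq\Gal(V/V^G)$ being immediate from the definition of $\Gal(V/V^G)$. Write $\widetilde G=\Gal(V/V^G)$; then $G\subseteq\widetilde G$, and since $\widetilde G$ fixes $V^G$ pointwise we get $V^G\subseteq V^{\widetilde G}$, while $G\subseteq\widetilde G$ forces $V^{\widetilde G}\subseteq V^G$, so $V^{\widetilde G}=V^G$. The real content is the reverse inclusion $\widetilde G\subseteq G$, and the strategy is to show that for every $g\in\widetilde G$ the group $H:=\langle G,g\rangle$ is \emph{finite}, and then to apply Theorem \ref{tg2}.

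To control $\widetilde G$, apply Theorem \ref{thm compact} to the finite (hence compact) group $G$: as a $(G,V^G)$-module, $V=\bigoplus_{\chi\in\Irr(G)}W_\chi\otimes V_\chi$ with the $V_\chi$ pairwise non-isomorphic irreducible $V^G$-modules. Any $g\in\widetilde G$ fixes $V^G$ pointwise, hence commutes with the $V^G$-action on $V$ and so is a $V^G$-module automorphism; therefore $g$ preserves each isotypic piece $W_\chi\otimes V_\chi$ and, by Schur's lemma, acts there as $\rho_\chi(g)\otimes 1$ for a unique $\rho_\chi(g)\in GL(W_\chi)$. This gives an injective homomorphism $\widetilde G\hookrightarrow\prod_{\chi\in\Irr(G)}GL(W_\chi)$ into a linear group of bounded degree. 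If one shows that every $g\in\widetilde G$ has finite order, then $\widetilde G$ is a torsion linear group, hence locally finite by Schur's theorem, and so the finitely generated subgroup $H=\langle G,g\rangle$ is indeed finite.

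The step I expect to be the main obstacle is proving that each $g\in\widetilde G$ has finite order, equivalently that each $\rho_\chi(g)$ is semisimple with root-of-unity eigenvalues. Here one works with the finite set $S$ of eigenvalues of $g$ on $V$ (the union of the spectra of the $\rho_\chi(g)$), exploiting that $g$ is a VOA automorphism: if $u,v$ are $g$-eigenvectors with $u_nv\neq0$, then $u_nv$ is a $g$-eigenvector whose eigenvalue is the product; the vacuum $\mathbf 1$ is $g$-fixed; $u\mapsto Y(u,z)$ is injective; $g$ preserves the $L(0)$-grading and acts trivially on $V^G$; and the $V^G$-submodules of $V$ isomorphic to a given $V_\chi$ pair into $V^G=V_0$ through intertwining operators realized inside the multiplication of $V$. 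Combining these constraints should pin the finitely many elements of $S$ to roots of unity and rule out Jordan blocks, giving $g$ finite order.

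Finally, assuming $H=\langle G,g\rangle$ is finite: we have $G\subseteq H\subseteq\widetilde G$, hence $V^G\subseteq V^H\subseteq V^{\widetilde G}=V^G$, so $V^H=V^G$. By Theorem \ref{tg2} applied to the finite group $H$, the map $K\mapsto V^K$ is a bijection from subgroups of $H$ onto subVOAs of $V$ containing $V^H$; since $G$ and $H$ are both subgroups of $H$ with $V^G=V^H$, injectivity yields $G=H$, so $g\in G$. As $g\in\widetilde G$ was arbitrary, $\widetilde G\subseteq G$, completing the proof.
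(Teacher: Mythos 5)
Your reduction has the right shape, but the heart of the theorem is left unproved. The easy inclusion $G\subseteq\widetilde G=\Gal(V/V^G)$, the embedding $\widetilde G\hookrightarrow\prod_{\chi\in\Irr(G)}GL(W_\chi)$ via Theorem \ref{thm compact} and Schur's lemma, and the closing step (Schur's theorem on torsion linear groups, then injectivity of $H\mapsto V^H$ in Theorem \ref{tg2}) are all fine. But everything hinges on the claim that each $g\in\widetilde G$ has finite order, and for this you offer only a heuristic: a list of constraints that ``should'' force the eigenvalues to be roots of unity and ``rule out Jordan blocks.'' No argument is actually given for semisimplicity of the $\rho_\chi(g)$, and none of the constraints you list visibly excludes a nontrivial unipotent automorphism fixing $V^G$ pointwise (equivalently, a nonzero nilpotent derivation of $V$ vanishing on $V^G$); this is exactly where the work lies. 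Even the eigenvalue part needs more than injectivity of $u\mapsto Y(u,z)$: to conclude that the spectrum is closed under multiplication one needs $Y(u,z)v\neq 0$ for all nonzero $u,v$, which holds because $V$ is simple but must be invoked. So the proposal is incomplete precisely at the step you yourself flag as the main obstacle.

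For comparison, the paper's proof sidesteps the finite-order question entirely. Having observed, as you do, that any $g\in\widetilde G$ is a $V^G$-module map and hence acts only on the multiplicity spaces, so that $g$ can be regarded as an element of $\bigoplus_{\chi\in\Irr(G)}\End(W_\chi)\cong\C[G]$, it shows that $g$ is a group-like element of the Hopf algebra $\C[G]$: the counit condition $\varepsilon(g)=1$ comes from $g|_{V^G}=Id$, and $\Delta(g)=g\otimes g$ comes from the $G$-module embedding $\psi_n\colon W_\chi\otimes W_\gamma\to V$, $u\otimes v\mapsto\sum_{m\geq n}u_mv$ of \cite{DM2}, combined with the automorphism property $g(u_mv)=(gu)_m(gv)$. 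Since the group-like elements of $\C[G]$ are exactly $G$, this yields $g\in G$ directly, with no appeal to semisimplicity, roots of unity, or Theorem \ref{tg2}. If you want to salvage your route, the \cite{DM2} pairing --- which you allude to but never actually use --- is the missing ingredient: it is what converts the automorphism property of $g$ into multiplicativity across the $W_\chi$'s and thereby pins $g$ down inside $\C[G]$.
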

\begin{proof}
Obviously $G\subset \Gal(V/V^G)$. We now prove the containment $ \Gal(V/V^G)\subset G$ with the help of Hopf algebra.

Let $\C[G]$ be the group algebra associated to $G.$ Then $\C[G]$ is a cocommutative
Hopf algebra with comultiplication $\Delta,$  counit $\varepsilon$  and antipode $S:$
\begin{equation*}
\begin{split}
&\Delta: \mathbb{C}[G]\to \mathbb{C}[G] \otimes \mathbb{C}[G], ~\Delta(g)=g\otimes g, ~ \hbox{for } g \in G,\\
&\varepsilon: ~ \mathbb{C}[G]\to \mathbb{C},~~~~~~~~~~~~~~~~ \varepsilon(g)=1,~~~~~\hbox{ for } g\in G,\\
&S : \mathbb{C}[G]\ \to \mathbb{C}[G],~~~~~~~~~~~S(g)=g^{-1}, ~~\hbox{ for } g\in G.
\end{split}
\end{equation*}
 Recall that an element $g\in \C[G]$ with $\Delta(g)=g\otimes g$ and  $\varepsilon(g)=1$ is called a group-like element. It is well known that the set of group-like elements of $\C[G]$ is exactly $G$ itself.
 So it is enough to show that any $g\in \Gal(V/V^G)$ is a group-like element.

Since  $g|_{_{V^G}}=Id$,  $g:V\to V$ gives a $V^G$-module homomorphism. Thus
 $g W\chi \subseteq W\chi,$ for any $\chi\in \Irr (G),$ where $W_\chi$ is the same as in Theorem \ref{thm compact}. That implies
\begin{equation*}
g\in \bigoplus_{\chi \in \Irr(G)}\End (W_\chi).
\end{equation*}
Together with the fact that $\mathbb{C}[G]\cong \oplus_{\chi \in \Irr(G)}\End (W_\chi) $, $g$ can be viewed as an element in $\mathbb{C}[G]$.
We write $g=\sum_{h\in G} \lambda_{h}h,$ $\lambda_{h}\in \mathbb{C}.$
As $g|_{_{V^G}}=Id=\sum_{h\in G} \lambda_{h}h|_{V^G}=\sum_{h\in G}\lambda_{h}Id,$ we get
$\sum_{h\in G} \lambda_{h}=1,$ i.e. $\varepsilon(g)=1.$

Now in order to show $\Delta(g)=g\otimes g,$ it suffices to show that for any $\chi ,\ \gamma \in \Irr(G),$ $g(a\otimes b)=ga\otimes gb$, where $a\in W_\chi$ and $b\in W_{\gamma}$.
Let $W_\chi$, $W_\gamma$ be two $G$-submodules in $V.$ It is proved in \cite{DM2}  that there is a $G$-module isomorphism for sufficiently small $n$:
\begin{equation*}
\begin{split}
\psi_n: W_\chi \otimes W_\gamma &\to \langle\sum _{m=n}^{\infty}u_m v|u\in W_\chi,\ v\in W_\gamma\rangle,\\
u\otimes v & \mapsto \sum _{m=n}^{\infty}u_m v .
\end{split}
\end{equation*}
Since $g$ is an automorphism of $V$, $g(\sum _{m=n}^{\infty}u_m v)=\sum _{m=n}^{\infty}(gu)_m gv=\psi_n (g u\otimes gv)).$ We get $g (u\otimes v)=gu\otimes gv .$ Thus $g$ is a group-like element and the proof is complete.
\end{proof}

\begin{thm}\label{thm Galois}
 Let $V$ be a simple \voa as in Theorem \ref{thm chi(1)}, and $G$ a finite automorphism group of $V.$
 Then $H\mapsto V^H$ gives a one-to-one correspondence from the subgroups of $G$ and the vertex operator subalgebras of $V$ containing $V^G$ satisfying the following:

(1) For any subgroup $H$ of $G$, $[V:V^H]=o(H)$ and   $[V^H:V^G]=[G:H]$,

(2) $H\lhd G$ if and only if $V^H$ is a Galois extension of $V^G$. In this case $\Gal(V^H/V^G)\cong G/H.$
\end{thm}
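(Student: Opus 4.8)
The plan is to reduce everything to Theorems \ref{tg2}, \ref{thm chi(1)} and \ref{galois 1}. The bijection $H\mapsto V^H$ between subgroups of $G$ and subVOAs of $V$ containing $V^G$ is precisely Theorem \ref{tg2}, and it is inclusion-reversing, so only (1) and (2) remain. For (1), fix $H\le G$; since each $h\in H$ lies in $G$, the hypotheses of Theorem \ref{thm chi(1)} are inherited by $H$, so that theorem gives $[V:V^H]=\qdim_{V^H}V=o(H)$ and, in the same way, $[V:V^G]=o(G)$. For the tower $V^G\subseteq V^H\subseteq V$ one factors characters as $Z_V(iy)/Z_{V^G}(iy)=\big(Z_V(iy)/Z_{V^H}(iy)\big)\big(Z_{V^H}(iy)/Z_{V^G}(iy)\big)$; the left side and the first factor tend to the finite nonzero limits $o(G)$ and $o(H)$ as $y\to 0^+$, so $[V^H:V^G]=\qdim_{V^G}V^H$ exists and equals $o(G)/o(H)=[G:H]$.

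For the forward implication in (2), suppose $H\lhd G$. Then $gV^H=V^{gHg^{-1}}=V^H$ for all $g\in G$, so restriction defines a homomorphism $G\to\Aut(V^H)$; its kernel $\{g\in G:g|_{V^H}=\mathrm{id}\}$ equals $H$, because $g|_{V^H}=\mathrm{id}$ says $V^{\langle g\rangle}\supseteq V^H$, which by the inclusion-reversing bijection of Theorem \ref{tg2} forces $\langle g\rangle\subseteq H$. Thus $G/H$ embeds into $\Aut(V^H)$ with $(V^H)^{G/H}=V^G$, and by (1) $\qdim_{V^G}V^H=[G:H]=o(G/H)$, so $V^H$ is a Galois extension of $V^G$. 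Since $V^H$ is a simple \voa (take $\chi$ trivial in Theorem \ref{thm compact}), Theorem \ref{galois 1} applied to $V^H$ and the finite group $G/H$ gives $\Gal(V^H/V^G)=\Gal\big(V^H/(V^H)^{G/H}\big)=G/H$.

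For the converse, assume $V^H$ is a Galois extension of $V^G$, witnessed by a finite $\hat G\le\Aut(V^H)$ with $(V^H)^{\hat G}=V^G$ and $\qdim_{V^G}V^H\le o(\hat G)$. Theorem \ref{galois 1} applied to $V^H$ shows $\Gal(V^H/V^G)=\hat G$, hence is finite; and repeating the restriction argument with $N_G(H)$ in place of $G$ (using $H\lhd N_G(H)$) produces an embedding $N_G(H)/H\hookrightarrow\Gal(V^H/V^G)$. The main obstacle is the reverse containment $\Gal(V^H/V^G)\subseteq N_G(H)/H$. Granting it, $o(\Gal(V^H/V^G))=[N_G(H):H]$, and then $[G:H]=\qdim_{V^G}V^H\le o(\hat G)=o(\Gal(V^H/V^G))=[N_G(H):H]\le[G:H]$ forces $N_G(H)=G$, i.e. $H\lhd G$; the identification $\Gal(V^H/V^G)\cong G/H$ in that case is already contained in the previous paragraph.

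To prove the reverse containment I would extend an arbitrary $\sigma\in\Gal(V^H/V^G)$ to an automorphism $\tilde\sigma$ of $V$ fixing $V^G$ pointwise; then $\tilde\sigma\in\Gal(V/V^G)=G$ by Theorem \ref{galois 1}, $\tilde\sigma$ preserves $V^H$ so $\tilde\sigma\in N_G(H)$ by Theorem \ref{tg2}, and $\tilde\sigma|_{V^H}=\sigma$. The extension step is where the real work lies. Using the $(G,V^G)$-module decomposition $V=\bigoplus_{\chi\in\Irr(G)}W_\chi\otimes V_\chi$ of Theorem \ref{thm compact}, one has $V^H=\bigoplus_\chi(W_\chi)^H\otimes V_\chi$; as the $V_\chi$ are pairwise non-isomorphic irreducible $V^G$-modules and $\sigma$ is $V^G$-linear, $\sigma$ preserves each summand and acts on it as $A_\chi\otimes\mathrm{id}$ for some $A_\chi\in GL((W_\chi)^H)$. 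Compatibility of $\sigma$ with the vertex operators --- expressed through the $G$-equivariant isomorphisms $\psi_n\colon W_\chi\otimes W_\psi\to\langle\sum_{m\ge n}u_m v\rangle$ of \cite{DM2} and the chosen intertwining operators among the $V_\chi$ --- should constrain the family $(A_\chi)$ to exactly those coming from the restriction of a $G$-action on $\bigoplus_\chi W_\chi$ preserving every $(W_\chi)^H$, that is, from an element of $N_G(H)$. Making this rigidity precise, in the Hopf-algebra spirit of the proof of Theorem \ref{galois 1}, is the technical heart of the theorem.
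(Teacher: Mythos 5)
Your part (1) and the forward implication of (2) are correct and essentially follow the paper's route (the tower/character factorization for $[V^H:V^G]=[G:H]$, the faithful $G/H$-action on $V^H$ via Theorem \ref{tg2}, and Theorem \ref{galois 1} applied to the simple VOA $V^H$). But the converse implication of (2) — the hardest part of the theorem — is not proved: your argument hinges on the containment $\Gal(V^H/V^G)\subseteq N_G(H)/H$, i.e.\ on extending an arbitrary $\sigma\in\Gal(V^H/V^G)$ to an automorphism of $V$ fixing $V^G$, and you explicitly leave this "rigidity" step as a program ("should constrain \dots the technical heart"). None of the quoted results (Theorems \ref{thm compact}, \ref{tg2}, \ref{galois 1}, or the maps $\psi_n$ of \cite{DM2}) yields such an extension, and it is far from clear how to control the compatibility of the family $(A_\chi)$ with intertwining operators without essentially re-proving the theorem. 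So as written the proposal has a genuine gap exactly where the real content lies; the reduction $[G:H]\le[N_G(H):H]$ granting the containment is fine, but the containment is the whole difficulty.

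The paper avoids any extension problem by a counting argument you could substitute for the missing step. Set $G'=\Gal(V^H/V^G)$; by Theorem \ref{galois 1} applied to $V^H$ and the witnessing group one gets $(V^H)^{G'}=V^G$ and $[V^H:V^G]\le o(G')$. Applying Theorem \ref{thm compact} to $(V^H,G')$ and comparing with $V=\oplus_{\chi\in\Irr(G)}W_\chi\otimes V_\chi$, the multiplicity space $R_\chi\subseteq W_\chi$ of $V_\chi$ in $V^H$ is either $0$ or an irreducible $G'$-module, and every irreducible $G'$-module occurs; hence by Theorem \ref{thm chi(1)}, $[V^H:V^G]=\sum_\chi\dim R_\chi\cdot\dim W_\chi\ \ge\ \sum_\chi(\dim R_\chi)^2=o(G')$. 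Combined with $[V^H:V^G]=[G:H]\le o(G')$ this forces $R_\chi\in\{0,W_\chi\}$ for every $\chi$; since $G$ acts on the $W_\chi$-factors, $gV^H=\oplus_\chi gR_\chi\otimes V_\chi=V^H$, i.e.\ $V^{gHg^{-1}}=V^H$ for all $g\in G$, and Theorem \ref{tg2} gives $gHg^{-1}=H$, so $H\lhd G$; the isomorphism $\Gal(V^H/V^G)\cong G/H$ then follows from the forward direction, as you noted. In short: keep your (1) and the forward half of (2), but replace the extension/rigidity sketch by this inequality argument on the multiplicity spaces $R_\chi$.
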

\begin{proof} The one to one correspondence is given in Theorem \ref{tg2}. By Theorem \ref{thm chi(1)} with $G$ replaced by $H$ we easily see that
\begin{equation*}
[V:V^H]=\sum_{\chi \in \Irr(H)} \dim(W_{\chi})\cdot \chi(1)=\sum_{\chi \in \Irr(H)}\chi(1)^2=o(H).
\end{equation*}
Also, $[V^H:V^G]=[V:V^G]/[V:V^H]=[G:H]$ and this proves (1).

For (2), we first notice that $gV^H=V^{gHg^{-1}}$ for $g\in G.$ If $H\lhd G$,  $G/H$ acts naturally on $V^H.$ So  $G/H$ is a subgroup of $\Gal(V^H/V^G).$ It is clear that $V^G=(V^H)^{G/H}.$ Then by Theorem \ref{galois 1}, $\Gal(V^H/V^G)\cong G/H$. Together with the fact $[V^H:V^G]=[G:H],$ we conclude that $V^H$ is a Galois extension of $V^G$.

Now we assume $V^H$ is a Galois extension of $V^G.$ For short we set $G'=\Gal(V^H/V^G).$
By Part (1) and the definition of Galois extension we know that $(V^H)^{G'}=V^G$ and
\begin{equation}\label{<}
[G:H]=[V^H:V^G]=[V^H:(V^H)^{G'}]\leq o(G').
\end{equation}
By Theorem \ref{thm compact}, we have two  decompositions
\begin{equation*}\label{V}
V=\bigoplus_{\chi \in \Irr(G)}W_\chi \otimes V_\chi ,
\end{equation*}
\begin{equation}\label{V^H}
V^H=\bigoplus_{\chi \in \Irr(G)} R_{\chi}\otimes V_\chi,
\end{equation}
where $R_\chi \subseteq W_\chi$ is a subspace, and each $R_\chi$ is an irreducible $G'$-module. We also know every irreducible $G'$-module occurs in $V^H$ by Theorem \ref{thm compact}.

By Theorem \ref{thm chi(1)} and equation (\ref{V^H}), we have
\begin{equation*}
\begin{split}
[V^H:V^G]=&\sum_{\chi \in \Irr(G)}\dim R_\chi \cdot \qdim_{V^G} V_\chi\\
=&\sum_{\chi \in \Irr(G)}\dim R_\chi\cdot \dim W_\chi\\
\geq & \sum_{\chi \in \Irr(G)}(\dim R_\chi)^2\\
=&o(G').
\end{split}
\end{equation*}
Together with equation \eqref{<}, we have that for any $\chi \in \Irr(G)$ with $R_{\chi}\neq 0$,
$\dim R_{\chi}=\qdim_{V^G}V_\chi,$ i.e. either $R_\chi =0$ or $R_\chi = W_\chi,$ therefore for any $g\in G,$ $gV^H=V^{gHg^{-1}}\subset V^H.$
 By Theorem \ref{tg2},  $H\leqslant gHg^{-1},$ which implies $H=gHg^{-1}.$  And the proof is complete.
\end{proof}

\begin{rem} Let $E\supset F$ be two fields. In classical Galois theory,  the following two definitions for Galois extension are equivalent:

(1) $E$ is called a Galois extension of $F$ if $F=E^G$ for some $G,$ where $G$ is a finite subgroup of $\Aut(E),$

(2) $E$ is called a Galois extension of $F$  if $\dim_F E =\Gal(E:F).$

We believe that the same is true for vertex operator algebra. But we cannot prove it in this paper. However, if $V$ is a rational vertex operator algebra satisfying the assumptions given in Theorem \ref{thm chi(1)},
these two definitions are equivalent. Since $V^H$ in Theorem \ref{thm Galois} does not satisfy the assumptions of Theorem \ref{thm chi(1)} (these assumptions should hold according to conjectures in orbifold theory but have not been established), we need to use both (1) and (2) in the definition of Galois extension for vertex operator algebra to have a good Galois theory.
\end{rem}


\begin{thebibliography}{FLMM}

\bibitem[ABD]{ABD}
T. Abe, G. Buhl and C. Dong, Rationality, Regularity, and $C_2$-cofiniteness,
{\em Trans. Amer. Math. Soc.} {\bf 356} (2004), 3391-3402.

\bibitem[BH]{BH} A. E. Brouwer and W. H.  Haemers, Spectra of Graphs, {\em Universitext,} Springer, 2012.

\bibitem[BPZ]{BPZ} A. A. Belavin, A. M. Polyakov and A. B.
Zamolodchikov, Infinite conformal symmetries in two-dimensional
quantum field theory, {\em Nucl. Phys.} {\bf B241} (1984),
333-380.

\bibitem[C]{C} R. Coquereaux, Global dimensions for Lie groups at level $k$ and their conformally exceptional quantum subgroups,  arXiv: 1003.2589.

\bibitem[CIZ1]{CIZ1} A. Cappelli, C. Itzykson and J.-B. Zuber, Modular invariant partition function in two dimensions, {\em Nucl. Phys.} {\bf B280} (1987), 445-464.

 \bibitem[CIZ2]{CIZ2} A. Cappelli, C. Itzykson and J.-B. Zuber, The $A$-$D$-$E$ classification of minimal and $A^{(1)}_1$ conformal invariant theories, {\em Comm. Math. Phys.} {\bf 113}  (1987), 1-26.

\bibitem[D]{D} C. Dong, Vertex algebras associated with even lattices, {\em J. Alg.} {\bf 161} (1993), 245-265.

\bibitem[DG]{DG} C. Dong and R. Jr. Griess, Rank one lattice type \voas \ and their automorphism groups, {\em J. Alg.} {\bf 208} (1998), 262-275.

\bibitem[DGH]{DGH} C. Dong, R. Jr.  Griess and G. Hoehn,  Framed vertex operator algebras, codes and the Moonshine module, {\em Comm. Math. Phys.} {\bf 193} (1998), 407-448.

\bibitem[DJ]{DJ} C. Dong and C. Jiang, A characterization of \voa $L(\frac{1}{2},0)\otimes L(\frac{1}{2},0),$ {\em Comm. Math. Phys.} {\bf 296} (2010), 69-88.

\bibitem[DL]{DL} C. Dong and J. Lepowsky, Generalized Vertex
Algebras and Relative Vertex Operators, {\em Progress in Math,} {\bf Vol. 112}, Birkh\"{a}user, Boston, 1993.

\bibitem[DLM1]{DLM} C. Dong, H. Li and G. Mason,
Compact automorphism groups of \voas , {\em Int. Math. Res. Not.} {\bf 18}  (1996), 913-921.

\bibitem[DLM2]{DLM1} C. Dong, H. Li and G. Mason,
Regularity of rational vertex operator algebras, {\em  Adv. Math.} {\bf 132} (1997), 148-166.

\bibitem[DLM3]{DLM2} C. Dong, H. Li and G. Mason,
Twisted representations of vertex operator algebras, {\em Math. Ann.}
{\bf  310} (1998), 571--600.

\bibitem[DLM4]{DLM3} C. Dong, H. Li and G. Mason,
Vertex operator algebras and associative algebras, {\em  J. Alg.}
{\bf 206} (1998), 67-96.

\bibitem[DLM5]{DLM4} C. Dong, H. Li and G. Mason,
Modular-invariance of trace functions in orbifold theory and generalized moonshine, {\em  Comm. Math. Phys.}
{\bf 214} (2000), 1-56.

\bibitem[DM1]{DM1} C. Dong and G. Mason, On quantum Galois theory, {\em Duke Math. J.} {\bf 86} (1997), 305-321.

\bibitem[DM2]{DM2} C. Dong and G. Mason, Quantum Galois theory for compact Lie groups, {\em  J. Alg.} {\bf 214} (1999), 92-102.

\bibitem[DM3]{DM3} C. Dong and G. Mason, Rational vertex operator algebras
and the effective central charge, {\em Int. Math. Res. Not.} {\bf 56} (2004), 2989-3008.

\bibitem[DMZ]{DMZ} C. Dong, G. Mason and Y. Zhu, Discrete series of the Virasoro algebra and the moonshine module, {\em Proc. Symp. Pure. Math, Amer. Math. Soc.} {\bf 56 II}, (1994), 295-316.

\bibitem[DVVV]{DVVV} R. Dijkgraaf, C. Vafa, E. Verlinde and H.
Verlinde, The operator algebra of orbifold models, {\em Comm. Math.
Phys.} {\bf 123} (1989), 485-526.

\bibitem[DY]{DY} C. Dong and N. Yu,  $Z$-graded weak modules and regularity,  arXiv:1110.1882.


\bibitem[ENO]{ENO} P. Etingof, D. Nikshych and V.  Ostrik,  On fusion categories,  {\em Ann. Math. (2)} {\bf 162} (2005), 581-642.


\bibitem[FMS]{FMS} P. D. Francesco, P. Mathieu and D. Snchal, Conformal field theory, Springer-Verlag, 1997.

\bibitem[FHL]{FHL}
I. Frenkel, Y. Huang and J. Lepowsky, On axiomatic approaches to vertex operator algebras and modules, {\em Mem. Amer. Math. Soc.} {\bf 104} 1993.

\bibitem[FLM]{FLM} I. Frenkel, J. Lepowsky and A. Meurman,
Vertex Operator Algebras and the Monster, {\em Pure and Applied
Math.,} {\bf Vol. 134,} Academic Press, Boston, 1988.

\bibitem[FZ]{FZ}
I. Frenkel and Y. Zhu, Vertex operator algebras associated to
representations of affine and
Virasoro algebras, {\it Duke Math. J.} {\bf 66} (1992), 123-168.

\bibitem[GL]{GL} D. Guido, and R. Longo,  An algebraic spin and statistics theorem, {\em Comm. Math. Phys.} {\bf 172} (1995), 517-533.

\bibitem[H]{H}
 Y.-Z. Huang, Vertex operator algebras and the Verlinde Conjecture, {\it  Comm. Contemp. Math.} {\bf 10} (2008), 103-154.

\bibitem[HL1]{HL1} Y.-Z. Huang and J. Lepowsky, Toward a theory of tensor product for representations
of a vertex operator algebra, {in Proc. 20th Intl. Conference on Diff. Geom. Methods
in Theoretical Physics}, New York, 1991, ed. S. Catto and A. Rocha, World Scientific,
Singapore, 1992, {\bf Vol. 1}, 344-354.

\bibitem[HL2]{HL2} Y.-Z. Huang and J. Lepowsky, Tensor products of modules for a vertex operator
algebra and vertex tensor categories, {\em Lie Theory and Geometry}, Birkhauser, Boston, 1994, 349-383.

\bibitem[HL3]{HL3} Y.-Z. Huang and J. Lepowsky, A theory of tensor products for module categories for
a vertex operator algebra, I, {\em Sel. Math.} {\bf 1} (1995), 699-756.

\bibitem[HL4]{HL4} Y.-Z. Huang and J. Lepowsky, A theory of tensor products for module categories for
a vertex operator algebra, III, {\em J. Pure Appl. Alg.} {\bf 100} (1995), 141-171.

 \bibitem[HMT]{HMT}
 A. Hanaki, M. Miyamoto, and D. Tambara, Quantum Galois theory for finite groups, {\em Duke Math. J.} {\bf 97} (1999), 541-544.


 \bibitem[IZ]{IZ} C. Itzykson and J.-B. Zuber ,  Two-dimensional conformal invariant theories on a torus, {\em Nucl. Phys.} {\bf B27}, (1986), 580-616.


\bibitem[J]{J}
V.F.R. Jones, Index for subfactors, {\em Inv. Math.} {\bf 72} (1983), 1-25.

\bibitem[K]{K} V. G.  Kac, Infinite-dimensional Lie algebras, Third
edition, Cambridge University Press, Cambridge, 1990.

\bibitem[KR]{KR} V. G. Kac and A. Raina, Highest Weight Representations of Infinite
Dimensional Lie Algebras, World Scientific, Adv. Ser. In Math.
Phys. Singapore, 1987.

\bibitem[Ka]{Ka} M. Kaku, Strings, Conformal Fields, and Topology, {\em Grad Texts in Contemp Phys.} Springer-Verlag, New York, 1991.

\bibitem[KLM]{KLM} Y. Kawahigashi, R. Longo and M. M\"uger,
Multi-interval subfactors and modularity of representations
in conformal field theory,{ \em Comm. Math. Phys.} {\bf 219} (2001),
631-669.

\bibitem[LS]{LS} P. W. H. Lemmens and J. J. Seidel, Equiangular lines, {\em J. Alg.} {\bf 24} (1973),
494-512.


\bibitem[Li1]{Li2}
 H. Li, Local systems of vertex operators, vertex superalgebras and modules, {\em J. Pure Appl. Alg.} {\bf 109} (1996), 143--195.

\bibitem[Li2]{Li3} H. Li, An analogue of the Hom functor and a generalized nuclear democracy theorem,
{\em Duke Math. J.} {\bf 93} (1998), 73-114.

\bibitem[M]{M} A. Milas, Fusion rings for degenerate minimal models,
{\em J. Alg.} {\bf 254} (2002), 300-335.
\bibitem[MS]{MS} G. Moore and N. Seiberg, Classical and quantum
conformal field theory, {\em Comm. Math. Phys.} {\bf 123} (1989), 177-254.

\bibitem[PP]{PP} M. Pimsner and S. Popa,
 Entropy and index for subfactors, {\em Ann. Sci. Ec. Norm.
Sup.} {\bf 19} (1986), 57-106.

\bibitem[S]{S} J. H. Smith, Some properties of the spectrum of a graph, {\em Combinatorial Structures and their Applications (Proc. Calgary Internat. Conf., Calgary, Alta.)},  pp. 403-406, New York: Gordon and Breach, 1970.


\bibitem[V]{V}
E. Verlinde, Fusion rules and modular transformation in 2D conformal field theory, {\em Nucl. Phys.}
{\bf B300} (1988), 360-376.

\bibitem[W]{W} W. Wang, Rationality of Virasoro \voas , {\em Duke Math. J.} {\bf 71,} (1993), 197-211.
\bibitem[Wa]{Wa} A. Wassermann,  Operator algebras and conformal field theory III:
Fusion of positive energy representations of $LSU(N)$ using bounded operators, {\em  Inv. Math.} {\bf 133} (1998), 467-538.

\bibitem[X]{X} F. Xu, Algebraic orbifold conformal field theories, {\em Proc. Natl. Acad. Sci.} USA {\bf 97} (2000), 14069-14073.

\bibitem[Xu]{Xu}
X. Xu, Introduction to Vertex Operator Superalgebras and Their Modules,
{\em Mathematics and its Applications,} {\bf Vol. 456},
 Kluwer Academic Publishers, Dordrecht, 1998.



\bibitem[Z]{Z}
Y. Zhu, Modular invariance of characters of vertex operator algebras,
{\em J. Amer, Math. Soc.}  {\bf 9} (1996), 237-302.
\end{thebibliography}
\end{document}